\newtheorem{lem}{Lemma}
\newtheorem{ass}{Assumption}
\DeclareMathOperator{\rank}{rank}
\DeclareMathOperator{\ima}{im}
\newtheorem{benchmark}{Benchmark}
\definecolor{RED}{rgb}{1,0,0}\definecolor{BLUE}{rgb}{0,0,1}
\def\frownfill{$\scriptscriptstyle\m@th\mathord\frown$}
\def\bow#1{\vbox{\m@th\ialign{##\crcr
			\hfil\frownfill\hfil\crcr\noalign{\kern-0.2\p@\nointerlineskip}
			$\hfil\displaystyle{#1}\hfil$\crcr}}}
\def\bbow#1{\vbox{\m@th\ialign{##\crcr
			\hfil\frownfill\hfil\crcr\noalign{\kern-0.7\p@\nointerlineskip}
			\hfil\frownfill\hfil\crcr\noalign{\kern-0.3\p@\nointerlineskip}
			$\hfil\displaystyle{#1}\hfil$\crcr}}}
\def\widefrownfill{$\m@th\mathord\frown$}
\def\widebow#1{\vbox{\m@th\ialign{##\crcr
			\hfil\widefrownfill\hfil\crcr\noalign{\kern-0.9\p@\nointerlineskip}
			$\hfil\displaystyle{#1}\hfil$\crcr}}}
\def\widebbow#1{\vbox{\m@th\ialign{##\crcr
			\hfil\widefrownfill\hfil\crcr\noalign{\kern-1.8\p@\nointerlineskip}
			\hfil\widefrownfill\hfil\crcr\noalign{\kern-0.9\p@\nointerlineskip}
			$\hfil\displaystyle{#1}\hfil$\crcr}}}
\begin{document}

\title{Systems of Differential Algebraic Equations in Computational Electromagnetics}
\author{Idoia  Cortes Garcia, Sebastian Schöps, Herbert De Gersem and Sascha Baumanns}
\institute{Idoia Cortes Garcia, Sebastian Schöps and Herbert De Gersem \at Technische Universität Darmstadt, Graduate School of Computational Engineering
Dolivostraße 15, 64293 Darmstadt, Germany, \email{cortes@gsc.tu-darmstadt.de}, \email{schoeps@gsc.tu-darmstadt.de}, \email{degersem@temf.tu-darmstadt.de}
\and Sascha Baumanns \at Universität zu Köln, Mathematisches Institut, Weyertal 86-90, 50931 Köln, Germany, \email{sbaumanns@math.uni-koeln.de}}
\maketitle

\abstract{Starting from space-discretisation of Maxwell's equations, various classical formulations are proposed for the simulation of 
electromagnetic fields. They differ in the phenomena considered as well as in the variables chosen for discretisation. This contribution presents a 
literature survey of the most common approximations and formulations with a focus on their structural properties. The differential-algebraic 
character is discussed and quantified by the differential index concept.}

\section{Introduction}

\label{sect:introduction}
Electromagnetic theory has been established by Maxwell in 1864 and was reformulated into the language of vector calculus by Heavyside in 1891 
\cite{Maxwell_1864aa,Heaviside_1891aa}. A historical overview can be found in the  review article \cite{Rautio_2014aa}. The theory is well understood 
and rigorously presented in many text books, e.g. \cite{Haus_1989aa}, \cite{Jackson_1998aa}, \cite{Griffiths_1999aa}. More recently researchers have 
begun to formulate the equations in terms of exterior calculus and differential forms which expresses the relations more elegantly and metric-free, 
e.g.~\cite{Hehl_2003aa}.

The simulation of three-dimensional spatially distributed electromagnetic phenomena based on Maxwell's equations is roughly 50 years old. An early 
key contribution was the proposition of the finite difference time domain method (FDTD) by Yee to solve the high-frequency hyperbolic problem on 
equidistant grids in 1966 \cite{Yee_1966aa}, and its subsequent generalisations and improvements, e.g. \cite{Taflove_1995aa}. Among the most 
interesting generalisations are the Finite Integration Technique \cite{Weiland_1977aa} and the Cell Method \cite{Alotto_2006aa} because they can be 
considered as discrete differential forms \cite{Bossavit_2000ab}. Most finite-difference codes formulate the problem in terms of the electric and 
magnetic field strength and yield ordinary differential equations after space discretisation which are solved explicitly in time. FDTD is very robust 
and remarkable efficient \cite{Monk_1994aa} and is considered to be among the `top rank of computational tools for engineers and scientists studying 
electrodynamic phenomena and systems' \cite{Taflove_2007aa}.

Around the same time at which FIT was proposed, circuit simulation programs became popular, e.g. \cite{Weeks_1973aa,Nagel_1973aa} and Albert Ruehli 
proposed the Partial Element Equivalent Circuit method (PEEC) \cite{Ruehli_1974aa,Ruehli_2015aa}. PEEC is based on an integral formulation of the 
equations and utilises Green's functions similarly to the Boundary Element Method (BEM) or the Method of Moments (MOM) as BEM is called in the 
electromagnetics community \cite{Harrington_1993aa}.

Historically, the Finite Element Method (FEM) was firstly employed to Maxwell's equations using nodal basis functions. For vectorial fields, this 
produces wrong results known as `spurious modes' in the literature. Their violation of the underlying structure, or more specifically of the function 
spaces, is nowadays well understood. Nédélec proposed his edge elements in 1980 \cite{Nedelec_1980aa} which are also known as Whitney elements 
\cite{Bossavit_1998aa}. A rigorous mathematical discussion can be found in many text books, e.g. \cite{Monk_2003aa}, \cite{Alonso-Rodriguez_2010aa} 
and \cite{Assous_2018aa}. Albeit less wide spread, the application of nodal elements is still popular, for example in the context of discontinuous 
Galerkin FEM \cite{Hesthaven_2008aa,Godel_2010ab}. Also equivalences among the methods have been shown, most prominently FIT can be interpreted on 
hexahedral meshes as lowest order Nédélec FEM with mass lumping \cite{Bossavit_2000ab,Bondeson_2005aa}.

From an application point of view, electromagnetic devices may behave very differently, e.g. a transformer in a power plant and an antenna of a 
mobile phone are both described by the same set of Maxwell's equations but still feature different phenomena. Therefore, engineers often solve 
subsets (\emph{simplifications}) of Maxwell's equation that are relevant for their problem, for example the well-known eddy-current problem 
\cite{Haus_1989aa,Dirks_1996aa,Rapetti_2014aa} or the well-known wave equation \cite{Taflove_2007aa}. For each, one or more \emph{formulations} have 
been proposed. They are either distinguished by the use of different variables or gauging conditions \cite{Biro_1995aa,Biro_1990aa}. 
 
It follows from the variety of simplifications and formulations that discretisation methods have individual strengths and weaknesses for the 
different classes of applications. For example, the formulation used in FDTD relies on an explicit time integration method which is particularly 
efficient if the mass matrices are easily invertible, i.e., if they are diagonal or at least block diagonal \cite{Bossavit_1999af}. This allows FDTD 
to solve problems with several billions of degrees of freedom. Classical FEM is less commonly applied in that case but one may also analyse 
high-frequency electromagnetic phenomena in the frequency domain, either to investigate resonance behaviour 
\cite{Weiland_1985aa} or source problems with right-hand-sides that can be assumed to vary sinusoidally at a given frequency. In these cases, one solves smaller complex-valued linear systems but the reduced sparsity due to FEM is counteracted by the flexibility in the mesh generation \cite{Boffi_2010aa}. Also coupling Maxwell's equations to other physics may require tailored formulations, see for example for applications in the field of semiconductors \cite{Schoenmaker_2017aa,Schilders_2005aa}.

In the low-frequency regime the situation is often more involved since one deals with degenerated versions of Maxwell's equations as certain 
contributions to the equation vanish in the static limit and the original system becomes unstable \cite{Ostrowski_2012aa,Jochum_2015aa,Eller_2017aa}. 
One often turns to approximations of Maxwell's equations as the well-known eddy-current problem. These approximate formulations are often more 
complicated as they may yield parabolic-semi-elliptic equations that become eventually systems of differential-algebraic equations (DAEs) after space 
discretisation. The resulting systems are commonly integrated in time domain by fully or linear-implicit methods, e.g. 
\cite{Nicolet_1996aa,Clemens_2003aa}. Only recently, explicit method gained again interest, \cite{Auserhofer_2009ab,Schops_2012aa,Dutine_2017ae}.

\begin{figure}[t]
	\centering
	\includegraphics{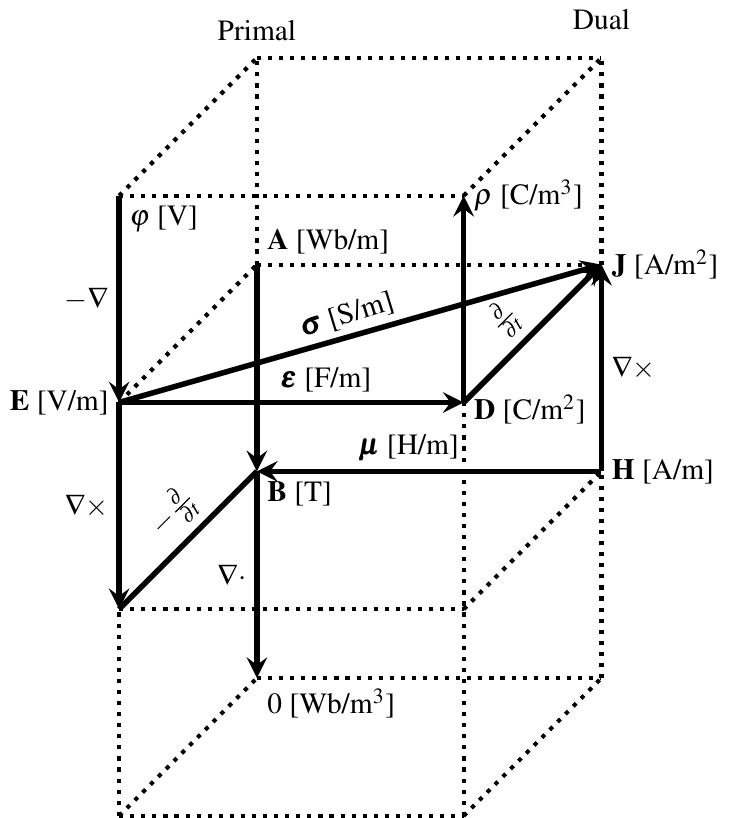}
	\caption{Maxwell's house, based on similar diagrams in \cite{Bossavit_1991aa,Deschamps_1981aa,Tonti_1975aa}. The concept of duality is for example discussed in the framework of differential forms in \cite{Hehl_2003aa} and using traditional vector calculus in \cite[Section 6.11]{Jackson_1998aa}.}
	\label{fig:house}
\end{figure}

Most circuit and electromagnetic field formulations yield DAE systems; the first mathematical treatment of such problems can be traced back to the 60s \cite{Lamour_2013aa} but gained increased interest in the 80s, e.g. \cite{Petzold_1982aa}. An important concept in the analysis of DAEs and their well-posedness are the various index concepts, which try to quantify the difficulty of the numerical time-domain solution, see e.g. \cite{Petzold_1982aa}. This paper discusses the most important low and high-frequency formulations in computational electromagnetics with respect to their differential index. An detailed introduction of the index and its variants is not discussed here and the reader is referred to text books and survey articles \cite{Brenan_1995aa, Hairer_1996aa, Lamour_2013aa, Mehrmann_2015aa}.

\bigskip

This paper summarises relevant discrete formulations stemming from Maxwell's equations. It collects the corresponding known DAE results from the 
literature, i.e., \cite{Nicolet_1996aa,Tsukerman_2002aa,Bartel_2011aa,Baumanns_2013aa}, homogenises their notation and discusses a few missing cases. 
Each problem is concretised by a mathematical description and specification of an example. The corresponding source code is freely available such 
that these example can be used as \emph{benchmarks}, e.g. for the development of time integrators or numerical tools to analyse differential 
equations. 

The paper is organised as follows: Section~\ref{sec:Maxwell} discusses Maxwell's equations, the relevant material relations and boundary conditions. 
The classical low-frequency approximations and electromagnetic potentials are introduced. Section~\ref{sec:space} outlines the spatial discretisation 
in terms of the finite element method and the finite integration technique. After establishing the DAE index concept in Section~\ref{sec:DAE}, the 
various discrete formulations are derived. They are discussed separately for the high-frequency full-wave case in Section~\ref{sec:fullwave} and the 
quasistatic approximations in Section~\ref{sec:qs}. Finally, conclusion are drawn in Section~\ref{sec:conclu}. 
\section{Maxwell's Equations}\label{sec:Maxwell}
Electromagnetic phenomena are described on the macroscopic level by Maxwell's equations \cite{Maxwell_1864aa,Heaviside_1891aa,Haus_1989aa,Jackson_1998aa,Griffiths_1999aa}
Those can be studied in a standstill frame of reference in integral form
\begin{subequations}\label{eq:maxwell_int}
  \begin{align}
    \int\displaylimits_{\partial A}\ensuremath{\vec{E}} \cdot \mathrm{d}\vec{s} &= -\int\displaylimits_{A}\frac{\partial{\ensuremath{\vec{B}}}}{\partial{{t}}}\cdot \mathrm{d}\ensuremath{\vec{A}}\;,\label{eq:afaraday_lenz_int}\\
    \int\displaylimits_{\partial V}\ensuremath{\vec{D}}\cdot\mathrm{d}\ensuremath{\vec{A}} &=\int\displaylimits_{V}\rho \mathrm{d}V\;,\\
    \int\displaylimits_{\partial  A}\ensuremath{\vec{H}}\cdot\mathrm{d}\vec{s}&=\int\displaylimits_{A}\left(\frac{\partial{\ensuremath{\vec{D}}}}{\partial{{t}}}+\ensuremath{\vec{J}}\right)\cdot\mathrm{d}\ensuremath{\vec{A}}\;,\\
    \int\displaylimits_{\partial V}\ensuremath{\vec{B}}\cdot\mathrm{d}\ensuremath{\vec{A}} &= 0,
  \end{align}
\end{subequations}
for all areas $A$ and volumes $V\subset \mathbb{R}^3$. Using Stokes and Gauß' theorems one derives a set of partial differential equations, see e.g. \cite[Chapter 1.1.2]{Assous_2018aa} for a mathematical discussion on their equivalence, 
\begin{subequations}\label{eq:maxwell}
  \begin{align}
    \nabla\times\ensuremath{\vec{E}} &= -\frac{\partial{\ensuremath{\vec{B}}}}{\partial{{t}}}\;,\label{eq:afaraday_lenz_diff}\\
    \nabla\times\ensuremath{\vec{H}} &= \frac{\partial{\ensuremath{\vec{D}}}}{\partial{{t}}} +\ensuremath{\vec{J}}\;,\label{eq:ampere_maxwell_diff}\\
    \nabla\cdot\ensuremath{\vec{D}}  &= \ensuremath{\rho}\;,\label{eq:gauss_diff} \\
    \nabla\cdot\ensuremath{\vec{B}}  &= 0 ,\label{eq:no_mag_mono_diff}
  \end{align}
\end{subequations}
with 
$\ensuremath{\vec{E}}$ the electric field strength,
$\ensuremath{\vec{B}}$ the magnetic flux density,
$\ensuremath{\vec{H}}$ the magnetic field strength,
$\ensuremath{\vec{D}}$ the electric flux density and
$\ensuremath{\vec{J}}$ the electric current density 
composed of conductive and source currents, being vector fields $\mathcal{I}\times\Omega\rightarrow\mathbbm{R}^3$
depending on space $\ensuremath{ \vec{r} }\in\Omega$  and time $t\in\mathcal{I}$. The electric charge density $\ensuremath{\rho}:\mathcal{I}\times\Omega\rightarrow\mathbbm{R}$ is the 
only 
scalar field. Finally $A$ and $V$ are all areas (respectively volumes) in $\Omega$. 
\begin{ass}[Domain]
The domain $\Omega \subset  \mathbb R^3$ is open, bounded, Lipschitz and contractible (simply connected with connected boundary, see e.g., \cite{Bossavit_1998aa}).
\label{ass:dom}
\end{ass}

Maxwell's equations give raise to the so-called de Rham complex, see e.g. \cite{Bossavit_1998aa}. It describes abstractly the relation of the electromagnetic fields in terms of the images and kernels of the differential operators. A simple visualisation is given in Fig.~\ref{fig:house}.
This diagram is sometimes called Tonti diagram \cite{Tonti_1975aa}, Deschamps diagram \cite{Deschamps_1981aa}, or, in the special case of Maxwell’s equations, Maxwell’s house \cite{Bossavit_1991aa,Bossavit_1999ae}.

\bigskip

Exploiting the  fact that the divergence of a curl vanishes, one can derive from Amp\`{e}re-Maxwell's law 
\eqref{eq:ampere_maxwell_diff}-\eqref{eq:gauss_diff} the continuity equation 
\begin{align}
\label{eq:continuity_diff}
0=\frac{\partial{\rho}}{\partial{{t}}}+\nabla\cdot\ensuremath{\vec{J}}\;,
\end{align}
which can be interpreted in the static case as Kirchhoff's current law.

\begin{figure}[t]
	\centering
	\includegraphics{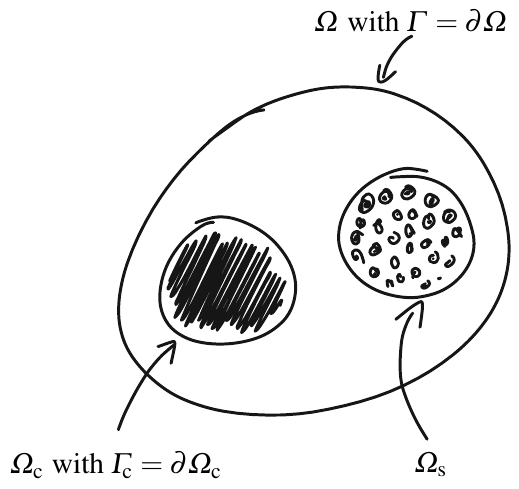}
		\caption{Sketch of domain.}
	\label{fig:patata}
\end{figure}

\subsection{Boundary Conditions and Material Relations}\label{sec:bc}
To mimic the behaviour of the electromagnetic field of an infinite domain on a truncated computational domain  and to model field symmetries, 
boundary 
conditions are imposed on $\Gamma=\partial\Omega$. We restrict ourselves 
to homogeneous electric (`ebc') and magnetic boundary conditions (`mbc')
\begin{equation}\label{eq:BC}
\begin{cases}
\vec{n}\times\ensuremath{\vec{E}} = 0 & \text{ in }\Gamma_\text{ebc}\;, \\ 
\vec{n}\times\ensuremath{\vec{H}}  = 0 & \text{ in }\Gamma_\text{mbc}\;,
\end{cases}
\end{equation}
where $\vec{n}$ is the outward normal to the boundary, $\Gamma_\text{ebc}\cup\Gamma_\text{mbc}=\Gamma$ and 
$\Gamma_\text{ebc}\cap\Gamma_\text{mbc}=\emptyset$.

\begin{remark}
Electrical engineers typically use the physical notation of electric (`ebc') or magnetic (`mbc') boundary conditions rather than the mathematical 
terminology of `Dirichlet' or `Neumann' conditions. The reason is that the mathematical distinction depends on the particular formulation, i.e. the 
variables chosen to describe the problem,  while the physical point of view remains the same. For example in an $E$-based formulation, ebc and mbc 
correspond to Dirichlet and Neumann conditions, respectively, whereas in an $H$-based formulation, ebc and mbc correspond to Neumann and Dirichlet 
conditions, respectively. 
\end{remark}

The fields in Maxwell's equations are further related to each other by the material relations
\begin{align}
  \label{eq:material}
  \ensuremath{\vec{D}}= \boldsymbol{\varepsilon} \ensuremath{\vec{E}} &\;,& \ensuremath{\vec{J}}_\mathrm{c} = \boldsymbol{\sigma}\ensuremath{\vec{E}} &\;,& \ensuremath{\vec{H}}= \boldsymbol{\nu} \ensuremath{\vec{B}}\;, 
\end{align}
where the permittivity $\boldsymbol{\epsilon}$, conductivity $\boldsymbol{\sigma}$ and reluctivity (inverse permeability $\boldsymbol{\mu}$) $\boldsymbol{\nu}$ are 
rank-2 tensor fields,
$\boldsymbol{\xi}:\Omega\rightarrow\mathbbm{R}^{3\times3}$, $\boldsymbol{\xi}\in\{\boldsymbol{\varepsilon}, \boldsymbol{\sigma},\boldsymbol{\mu},  \boldsymbol{\nu}\}$, whose possible 
polarisation or magnetisation and nonlinear or hysteretic dependencies on the 
fields 
are 
disregarded in the following for simplicity of notation and $\ensuremath{\vec{J}}_\mathrm{c}:\mathcal{I}\times \Omega\rightarrow\mathbbm{R}^3$ is the 
conduction current density. With these material relations one defines the total current density as
\begin{equation}\label{eq:Jtot}
\ensuremath{\vec{J}}=\ensuremath{\vec{J}}_\text{c}+\ensuremath{\vec{J}}_\text{s}
\end{equation}
where $\ensuremath{\vec{J}}_\text{s}$ is a given source current density that represents for example the current density impressed by a stranded conductor 
\cite{Schops_2013aa}.
 
\bigskip

We assume the following material and excitation properties as shown in Fig.~\ref{fig:patata}, see also \cite{Alonso-Rodriguez_2003aa,Schops_2013aa} for a more rigorous discussion.
\begin{ass}[Material]\label{ass:material}
The permittivity and permeability tensors, i.e., $\boldsymbol{\varepsilon}$ and $\boldsymbol{\mu}$, are positive definite on the whole domain $\Omega$ and only depend on 
space $\ensuremath{ \vec{r} }$.
The conductivity tensor is positive definite on a subdomain $\Omega_\mathrm{c}\subset\Omega$ and vanishes elsewhere, i.e., $\mathrm{supp}(\boldsymbol{\sigma}) = \Omega_\mathrm{c}$. The source current density is defined on the subdomain $\Omega_\mathrm{s}\subset\Omega$ with 
$\Omega_\mathrm{c}\cap\Omega_\mathrm{s} = 
\emptyset$, such that $\mathrm{supp}(\ensuremath{\vec{J}}_\mathrm{s}) = \Omega_\mathrm{s}$.
\end{ass}

This assumption describes the situation of an excitation given by one or several stranded conductors. The key assumption behind this model is a 
homogeneous current distribution which is justified in many situations, since the individual strands have diameters small than the skin depth and  
are therefore not 
affected by eddy currents, i.e. $\Omega_\mathrm{c}$ and $\Omega_\mathrm{s}$ are disjoint. Other models, e.g. solid and foil conductors, are not 
covered here. However, it can be shown that the various models can be transformed into each other and thus have similar properties 
\cite{Schops_2011ac}.

\subsection{Modelling of excitations}
The excitation has been given in \eqref{eq:Jtot} by the known source current density $\ensuremath{\vec{J}}_\text{s}$ which is typically either determined by given 
voltage drops $u_k:\mathcal{I}\rightarrow\mathbbm{R}$ or lumped currents $i_k:\mathcal{I}\rightarrow\mathbbm{R}$. The 3D-0D coupling is governed by so-called 
\emph{conductor models}. Besides the \emph{solid} and \emph{stranded} models 
\cite{Bedrosian_1993aa}, also more elaborated conductors have been proposed, e.g., foil-conductor models \cite{De-Gersem_2001aa}. 

The source current density $\ensuremath{\vec{J}}_\text{s}$ is not necessarily solenoidal, i.e.
$$ \nabla\cdot \ensuremath{\vec{J}}_\text{s} \neq 0.$$
Divergence-freeness is only required for the total current density $\ensuremath{\vec{J}}$ in the absence of charge variations due to the continuity equation 
\eqref{eq:continuity_diff}.  This has been exploited e.g. in \cite[Figure 3]{Schops_2013aa} to increase the sparsity of the coupling matrices. 
However, most conductor models enforce this property such that the source current can be given alternatively in terms of a source magnetic 
field strength
$$ \ensuremath{\vec{J}}_\text{s} = \nabla\times \ensuremath{\vec{H}}_\text{s}.$$

In \cite{Schops_2013aa} the abstract framework of \emph{winding density functions} was proposed. It unifies the individual stranded, solid and foil conductor models and denotes them abstractly by 
\begin{equation}
  \label{eq:winding}
 \boldsymbol{\chi}_k:\Omega\rightarrow\mathbbm{R}^{3}
\end{equation}
with an superscript if needed to distinguish among models, e.g. $\text{(i)}$ for stranded and $\text{(u)}$ solid conductors. In the simplest 
case they are characteristic functions with a given orientation. 

\begin{example}
If 
$\Omega_\text{s}=\Omega_{\text{s},1}\cup\Omega_{\text{s},2}\cup\Omega_{\text{s},3}$ consists of two parts of a winding oriented in 
$z$-direction, each with cross section $A_k$ and made of $N_k$ strands, and a massive bar with length $\ell_3$ aligned with the $z$-direction, the 
source current is given by
\begin{equation}\label{eq:Jsrc}
\ensuremath{\vec{J}}_\text{s}=\sum_{k=1}^2 \boldsymbol{\chi}_k^\text{(i)} i_k +\boldsymbol{\sigma}\boldsymbol{\chi}_3^\text{(u)} u_3\;.
\end{equation}
The winding density functions for the stranded conductor model are
\begin{equation}
	\boldsymbol{\chi}_k^\text{(i)}(\vec{r})= \begin{cases}
        \frac{N_k}{A_k}\mathbf{n}_z & \vec{r}\in\Omega_{\text{s},k}\\ 
        0 & \text{otherwise}
        \end{cases}
\end{equation}
and the unit vector in $z$-direction is denoted by $\mathbf{n}_z$. The stranded conductor model distributes an applied current in a homogeneous way 
such that the individual strands are neither spatially resolved nor modelled as line currents which would cause a too high computational effort. 
There are many proposals in the literature on how to construct them, most often a Laplace-type problem is solved on the subdomain $\Omega_s$, see 
e.g. \cite{De-Gersem_2004ac,Dyck_2004aa,Schops_2013aa}. 
The winding function for the solid conductor is
\begin{equation}
	\boldsymbol{\chi}_3^\text{(u)}(\vec{r})= \begin{cases}
	\frac{1}{\ell_3}\mathbf{n}_z & \vec{r}\in\Omega_{\text{s},3}\\ 
	0 & \text{otherwise}\;.
	\end{cases}
\end{equation}
The solid conductor model homogeneously distributes an applied voltage drop in the massive-conductor's volume.
\end{example}

The winding density functions allow to retrieve global quantities in a post-processing step, i.e., the current through a solid conductor model is calculated by
\begin{equation}\label{eq:isol}
	i_k =\int_\Omega \boldsymbol{\chi}_k^\text{(u)}\cdot \ensuremath{\vec{J}}\;\mathrm{d}V
\end{equation}
and the voltage induced along a stranded conductor model follows from
\begin{equation}\label{eq:vstr}
	u_k =-\int_\Omega \boldsymbol{\chi}_k^\text{(i)}\cdot \ensuremath{\vec{E}}\;\mathrm{d}V\;.
\end{equation}
The expressions (\ref{eq:Jsrc}), (\ref{eq:isol}) and (\ref{eq:vstr}) can also be used to set up a field-circuit coupled model~\cite{De-Gersem_2004ab}. 

An important property postulated in \cite{Schops_2013aa} is that winding functions should fulfil a partition of unity property. The integration of
$\boldsymbol{\chi}_k(\vec{r})$ along a line $\ell_k$ between both electrodes of a solid
conductor gives always $1$ and analogously, $\boldsymbol{\chi}_k^\text{(i)}(\vec{r})$ integrated
over any cross-sectional plane $A_k$ of a stranded conductor should equal the
number of turns $N_k$ of the winding:
\begin{align}\label{eq:partition_of_unity}
    \int_{\ell_k}\boldsymbol{\chi}_k^\text{(u)}\cdot\mathrm{d}\vec{s}&=1\;, \quad\forall \ell_k
    \quad\text{and}\quad
    \int_{A_k}\boldsymbol{\chi}_k^\text{(i)}\cdot\mathrm{d}\vec{S}=N_k, \quad\forall A_k \;.
\end{align}
Furthermore, conductor models should not intersect, i.e., \cite{Bartel_2011aa}
\begin{align}\label{eq:intersection}
    \boldsymbol{\chi}_i\cdot\boldsymbol{\chi}_j\equiv0 \qquad \text{for} \quad 
 i\neq j
\end{align}
where $\boldsymbol{\chi}_i$ and $\boldsymbol{\chi}_j$ are winding functions of any type. 

For simplicity of notation, we will restrict us in the following to the case of non-intersecting stranded conductors models, i.e.
\begin{ass}[Excitation]
The source current density is given by $n_\text{str}$ winding functions that fulfil \eqref{eq:partition_of_unity} and \eqref{eq:intersection} such 
that the excitation is given by $$\ensuremath{\vec{J}}_\text{s}=\sum_{k=1}^{n_\mathrm{str}} \boldsymbol{\chi}_k i_k
\quad\text{where}\quad\boldsymbol{\chi}_k\equiv\boldsymbol{\chi}_k^{(i)}.$$
\end{ass}

\subsection{Static and Quasistatic Fields}
Following the common classification of slowly varying electromagnetic fields, \cite{Dirks_1996aa}, we introduce the following definition for quasistatic and static fields  
\begin{definition}\label{def:qs}
	(\textbf{Simplifications}) The fields in Equation~\eqref{eq:maxwell} are called 
	\begin{enumerate}[label=(\alph*), ref=\alph*)]
		\item static if the variation of the magnetic and electric flux densities is disregarded:
		\label{stat}
		$$\frac{\partial{}}{\partial{{t}}}\ensuremath{\vec{B}} = 0\quad\text{and}\quad\frac{\partial{}}{\partial{{t}}}\ensuremath{\vec{D}} = 0\;;$$
		\item electroquasistatic if the variation of the magnetic flux density is disregarded:
		\label{eqs}
		$$\frac{\partial{}}{\partial{{t}}}\ensuremath{\vec{B}} = 0\;;$$
		\item magnetoquasistatic if the variation of the electric flux density is disregarded:
		\label{mqs}
		$$\frac{\partial{}}{\partial{{t}}}\ensuremath{\vec{D}} = 0\;;$$
		\item full wave if no simplifications are made. \label{full}
	\end{enumerate}
\end{definition}

In contrast to the full Maxwell's equations, the classical quasistatic approximations above feature only first order derivatives w.r.t. to time. 
However, there is another model for slowly varying fields that does not fit into this categorisation, the so-called Darwin approximation, e.g. 
\cite{Larsson_2007aa}. It considers the decomposition of the electric field strength $\ensuremath{\vec{E}}=\ensuremath{\vec{E}}_\text{irr}+\ensuremath{\vec{E}}_\text{rem}$ into an \textit{irrotational 
part} $\ensuremath{\vec{E}}_\text{irr}$ and a \textit{remainder part} $\ensuremath{\vec{E}}_\text{rem}$. In contrast to (\ref{stat}-(\ref{mqs} the Darwin approximation only neglects 
the displacement currents related to $\ensuremath{\vec{E}}_\text{rem}$ from the law of Amp\`{e}re-Maxwell \eqref{eq:ampere_maxwell_diff}. It still considers second 
order time derivatives.

\bigskip

The various approximations neglect the influence of several transient phenomena with respect to others, which implicitly categorises fields into primary and secondary ones. For example, let us consider a magnetoquasistatic situation, i.e., the displacement current density 
$\frac{\partial{}}{\partial{{t}}}\vec{D}=0$  is disregarded. This still allows the electric field $\frac{\partial{}}{\partial{{t}}}\vec{E}\neq0$ to vary. However, this variation implies that there is a secondary displacement current density $\frac{\partial{}}{\partial{{t}}}\vec{D}=\frac{\partial{}}{\partial{{t}}}\boldsymbol{\varepsilon}\vec{E}\neq0$ which is in the formulation not further considered.

\begin{remark}
Depending on the application, an electrical engineer chooses the formulation that is best suited for the problem at hand. Typically the physical 
dimensions, the materials and the occurring frequency are used to estimate which simplification is acceptable, see e.g. 
\cite{Haus_1989aa,Schmidt_2008aa,Steinmetz_2011aa}.
\end{remark}

\subsection{Electromagnetic Potentials}

Typically, one combines the relevant Maxwell equations into a \emph{formulation} by defining appropriate \emph{potentials}. One possibility is the $\ensuremath{\vec{A}}-\phi$ formulation \cite{Kameari_1990aa,Biro_1989aa,Bossavit_1998aa}, where a \emph{magnetic vector potential}  $\ensuremath{\vec{A}}:\mathcal{I}\times\Omega\rightarrow\mathbbm{R}^{3}$ and an 
\emph{electric scalar potential} $\phi:\mathcal{I}\times\Omega\rightarrow\mathbbm{R}$ 
follow as integration constants from integrating the magnetic Gauss law and
Faraday-Lenz' law in space, i.e.,
\begin{align}\label{eq:a-phi}
\ensuremath{\vec{B}}&=\nabla\times\ensuremath{\vec{A}}
&& \text{and} &
\ensuremath{\vec{E}}&=-\frac{\partial{\ensuremath{\vec{A}}}}{\partial{{t}}} -\nabla\phi\;.
\end{align}
The magnetic flux density $\ensuremath{\vec{B}}$ defines the magnetic vector potential $\ensuremath{\vec{A}}$ only up to a
gradient field. For a unique solution an additional gauging condition is required \cite{Biro_1989aa,Manges_1997aa,Clemens_2002aa}. 

A different approach can be taken with the $\ensuremath{\vec{T}}-\Omega$ 
formulation in case of a magnetoquasistatic approximation (Definition \hyperref[mqs]{\ref*{def:qs}.\ref*{mqs}}) \cite{Carpenter_1980aa,Webb_1993aa,Biro_1995aa}. 
Here, an 
\emph{electric vector potential}
$\ensuremath{\vec{T}}:\mathcal{I}\times\Omega\rightarrow\mathbbm{R}^{3}$ and a \emph{magnetic scalar potential} $\psi:\mathcal{I}\times\Omega\rightarrow\mathbbm{R}$ describe the 
fields as
\begin{align}\label{eq:to_pot}
	\ensuremath{\vec{J}}_\mathrm{c} &= \nabla\times \ensuremath{\vec{T}} &&
	\text{and} &
	\ensuremath{\vec{H}} &= \ensuremath{\vec{H}}_{\mathrm{s}} + \ensuremath{\vec{T}} - \nabla\psi\;,
\end{align}
with $\nabla\times\ensuremath{\vec{H}}_{\mathrm{s}} = \ensuremath{\vec{J}}_\mathrm{s}$. 
Again, to ensure uniqueness of solution, an additional gauge condition is necessary for $\ensuremath{\vec{T}}$. In contrast to the $\vec{A}-\phi$-formulation, the 
electric vector potential $\ensuremath{\vec{T}}$ is only 
non-zero on $\Omega_\mathrm{c}$. 

Existence and uniqueness of the continuous solution will not be discussed in this contributions, see for example \cite{Alonso-Rodriguez_2003aa,Eller_2017aa} for several formulations in the frequency domain case with anisotropic materials and mixed boundary conditions.

The boundary 
conditions introduced in \eqref{eq:BC} can now be translated into expressions involving only the potentials. This yields
for the A-$\phi$-formulation
\begin{equation}
\begin{cases}
\vec{n}\times\ensuremath{\vec{A}}=0, \, \phi=0  \,  &\text{ on }\Gamma_\text{ebc}\;,
\\ 
\vec{n}\times\left(\nu\nabla\times\ensuremath{\vec{A}}\right)= 0, \,  & \text{ on }\Gamma_\text{mbc}\;
\end{cases}
\end{equation}
and the for the T-$\Omega$ one
\begin{equation}
\begin{cases}
\mu\frac{\partial\psi}{\partial \vec{n}}= 0, \, &\text{ on }\Gamma_\text{ebc}\;,
\\ 
\vec{n}\times \nabla \psi= 0, \, & \text{ on }\Gamma_\text{mbc}\;.
\end{cases}
\end{equation}

For the electric vector potential $\ensuremath{\vec{T}}$ in the $\ensuremath{\vec{T}}-\Omega$ formulation, boundary conditions have to be set on 
the corresponding subdomain where it is defined $\Gamma_{\!\textrm{c}} = \partial{\Omega}_{\textrm{c}}$. This leads to electric boundary
conditions
\begin{align*}
	\ensuremath{ \vec{n} }_\mathrm{c}\times\ensuremath{\vec{T}} = 0 \hspace{0.5cm}  \text{on } \Gamma_{\textrm{c}}\;,
\end{align*}
with, analogous to the cases before, $\ensuremath{ \vec{n} }_\mathrm{c}$ being the outward normal unit vector of $\Gamma_{\textrm{c}}$.

\section{Spatial Discretisation}\label{sec:space}
Starting from a differential formulation the Ritz-Galerkin the FE method can be applied using the appropriate Whitney basis functions 
\cite{Monk_2003aa}. Alternatively, FIT or similarly the Cell Method provide a spatial discretisation of Maxwell's equations based on the integral 
form \cite{Weiland_1977aa,Alotto_2006aa}. 
In the lowest order case FE and FIT only differ by quadrature, i.e., FIT uses the midpoint rule \cite{Bondeson_2005aa}. We derive in the 
following the discretisation of the partial differential operators in the terminology of FIT on an hexahedral grid since this allows a simple and explicit 
construction of divergence, curl and gradient matrices which will aid the following discussion.

\subsection{Domain and Grid}\label{sec:numbering}
The domain $\Omega$ is decomposed into an oriented simplicial complex that 
forms the 
computational grid. For the explanation, it is considered to be a brick and the 
grid is defined in cartesian coordinates as
\begin{align*}
	G = \{V(i_x, i_y, i_z)\subset \mathbbm{R}^3 |& V(i_x, i_y, i_z) = [x_{i_x}, 
	x_{i_x+1}]\times[y_{i_y}, 
	y_{i_y+1}]\times [z_{i_z}, 
	z_{i_z+1}],\text{ for }\\
	& i_x=1,...,n_x-1; \ i_y=1,...,n_y-1; \ i_z=1,...,n_z-1\}.
\end{align*}
The elements $V(i_x,i_y,i_z) = V(n)$ are numbered consecutively with an index $n$: 
$$n(i_x,i_y,i_z) = i_xk_x + (i_y-1)k_y + (i_z-1)k_z,$$
with $k_x =1$, $k_y = n_x$ and $k_z = n_xn_y$. Our discrete field quantities can be defined on several geometrical objects such as 
points $P(n),$ edges $L_\omega(n)$ or facets $A_\omega(n)$. An edge $L_\omega(n)$ connects points $P(n)$ and $P(n+k_\omega)$ in $\omega=\{x,y,z\}$ direction. 
\begin{wrapfigure}[10]{r}{0.45\textwidth}
	\centering
	\vspace{-0.6cm}
	\includegraphics{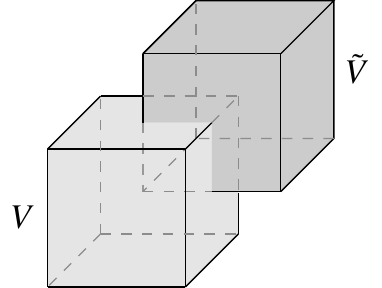}
		\caption{Primal and dual grid cells.}\label{fig:grid} 
\end{wrapfigure}
The 
facet $A_\omega(n)$ is defined by its smallest possible point $P(n)$ and directed such that its normal vector points towards 
$\omega$. There are $N=n_xn_yn_z$ points and as each point defines three edges and facets, there are in total $N_\text{dof}=3n_xn_yn_z$ edges and 
facets, ordered in $x$, $y$ and finally $z$-direction.

Nowadays, inspired by the notation of differential forms, it is well understood that a consistent mimetic discretisation of Maxwell's equations 
requires a primal/dual mesh pair. Even the discretisation with Whitney FEs implicitly constructs a dual mesh \cite{Bossavit_1998aa}. This can be 
traced back to the 
inherent structure of Maxwell's equations which are formed with quantities being dual to each other (see \cite[Section 6.11]{Jackson_1998aa}) that 
are linked by material properties (hodge operators in the terminology of differential forms). This concept is for example rigorously introduced in \cite{Hehl_2003aa}.

In contrast to FEM, both FIT and the Cell Method define the second (dual) grid $\tilde{G}$ explicitly. It is obtained by taking the centre of the 
cells in $G$ as dual grid points (see Figure \ref{fig:grid}). Now the dual quantities can be defined on the dual points $\widetilde{P}(n)$, edges 
$\widetilde{L}_\omega(n)$, facets $\widetilde{A}_\omega(n)$ and volumes $\widetilde{V}_n$. Dual edges and facets are truncated at the boundary 
\cite{Weiland_1996aa}.

\subsection{Maxwell's Grid Equations}

To illustrate the construction of the operator matrices, Faraday-Lenz's law in integral form, i.e. Equation \eqref{eq:afaraday_lenz_int},
\begin{equation*}
	\int\displaylimits_{\partial A}\ensuremath{\vec{E}} \cdot \mathrm{d}\vec{s} = -\int\displaylimits_{A}\frac{\partial{\ensuremath{\vec{B}}}}{\partial{{t}}}\cdot \mathrm{d}\ensuremath{\vec{A}}\;,
\end{equation*}
is used as an example. The equality must be fulfilled for all areas $A$, in particular for each facet $A_\omega(i,j,k)$ of the computational grid $G$. For the case $\omega = z$,
\begin{align*}
 \protect\bow{\mathrm{\mathbf{e}}}_x(i,j,k) + \protect\bow{\mathrm{\mathbf{e}}}_y(i+1,j,k) - \protect\bow{\mathrm{\mathbf{e}}}_x(i,j+1,k)- \protect\bow{\mathrm{\mathbf{e}}}_y(i,j,k) = - \frac{\mathrm{d}}{\mathrm{d}{}t}\,{}\protect\bbow{\mathrm{\mathbf{b}}}_z(i,j,k)\;,
\end{align*}
with 
$$\protect\bow{\mathrm{\mathbf{e}}}_\omega(i,j,k) = \displaystyle{\int\displaylimits_{L_\omega(i,j,k)}}\ensuremath{\vec{E}} \cdot \mathrm{d}\vec{s} \quad\text{ and }\quad \protect\bbow{\mathrm{\mathbf{b}}}_z(i,j,k) = 
-\displaystyle{\int\displaylimits_{A_z(i,j,k)}}\ensuremath{\vec{B}}\cdot \mathrm{d}\ensuremath{\vec{A}}\;.$$ 
This procedure is carried out for all the facets of $G$ and 
the following matrix equation
\begin{equation*}
\underbrace{\begin{bmatrix}
	& & & & {\vdots} & & & \\
	{\text{$\cdots$}} & 1 & {\text{$\cdots$}} & -1 & {\text{$\cdots$}} & -1 & 1 & {\text{$\cdots$}} \\
	& & & & {\vdots} & & & 
	\end{bmatrix}
}_\mathbf{C}
{\protect\bow{\mathrm{\mathbf{e}}}} 
= -\frac{\mathrm{d}}{\mathrm{d}{}t}\,{}\protect\bbow{\mathrm{\mathbf{b}}}
\end{equation*}
is obtained, which describes Faraday's law in our grid. The matrix $\mathbf{C}$ applies the curl operator on quantities integrated along edges. Similarly, the divergence matrix $\mathbf{S}$, acting on surface integrated degrees of freedom and the gradient matrix $\mathbf{G}$ are built. The same strategy is followed to obtain the matrices for the dual grid $\widetilde{\mathbf{C}}$, $\widetilde{\mathbf{S}}$ and $\widetilde{\mathbf{G}}$. It can be shown that the matrices mimic all classical identities of vector field on the discrete level, e.g. \cite{Schuhmann_2001aa} and \cite[Appendix A]{Schops_2011ac}. With this, the semi-discrete Maxwell's Grid Equations
\begin{subequations}\label{eq:maxwell_grid}
  \begin{align}
    \mathbf{C} \protect\bow{\mathrm{\mathbf{e}}} &= -\frac{\mathrm{d}}{\mathrm{d}t}\protect\bbow{\mathrm{\mathbf{b}}}\;,\label{eq:faraday_fit}\\
    \widetilde{\mathbf{C}} \protect\bow{\mathrm{\mathbf{h}}} &=\frac{\mathrm{d}}{\mathrm{d}{}t}\,{}\protect\bbow{\mathrm{\mathbf{d}}} + \protect\bbow{\mathrm{\mathbf{j}}}\;,\label{eq:ampere_fit}\\
    \mathbf{S} \protect\bbow{\mathrm{\mathbf{b}}} &= 0\;,\label{eq:nomono_fit}\\
    \widetilde{\mathbf{S}} \protect\bbow{\mathrm{\mathbf{d}}} &= \mathrm{\mathbf{q}}\label{eq:gauss_fit}
  \end{align}
\end{subequations}
are obtained which are closely resemble the system \eqref{eq:maxwell}. The matrices $\mathbf{C}$, $\widetilde{\mathbf{C}}\in\{-1,0,1\}^{N_\text{dof}\times N_\text{dof}}$ are the 
discrete curl operators, $\mathbf{S}$, $\widetilde{\mathbf{S}}\in\{-1,0,1\}^{N\times N_\text{dof}}$ the 
discrete divergence operators, which are all defined on the primal and dual grid, 
respectively. The fields are semi-discretely given by 
$\protect\bow{\mathrm{\mathbf{e}}}$, $\protect\bow{\mathrm{\mathbf{h}}}$, $\protect\bbow{\mathrm{\mathbf{d}}}$, $\protect\bbow{\mathrm{\mathbf{j}}}$, $\protect\bbow{\mathrm{\mathbf{b}}}:\mathcal{I}\to\ensuremath{\mathbb{R}}^{N_\text{dof}}$ and 
$\mathrm{\mathbf{q}}:\mathcal{I}\to\ensuremath{\mathbb{R}}^{N}$, and correspond to integrals of electric and magnetic voltages, 
electric fluxes, electric currents, magnetic fluxes and electric charges, respectively. 

\begin{lem}\label{prop:topmatrices}
	The operator matrices fulfil the following properties \cite{Weiland_1996aa}
	 \begin{itemize}
	 	\item divergence of the curl and curl of the gradient vanish on both grids
			\begin{equation}
				\mathbf{S}\mathbf{C} = 0\;,\;\;\widetilde{\mathbf{S}}\widetilde{\mathbf{C}} = 0
				\quad\text{and}\quad
				\mathbf{C}\mathbf{G} = 0\;,\;\;\widetilde{\mathbf{C}}\widetilde{\mathbf{G}} = 0
			\end{equation}
	 	\item primal (dual) gradient and dual (primal) divergence fulfill 
			\begin{equation}\label{eq:grad_div_fit}
				\mathbf{G} = -\widetilde{\mathbf{S}}^{\top} \quad\text{and}\quad \widetilde{\mathbf{G}} =-\mathbf{S}^{\top}
			\end{equation}
	 	\item curl and dual curl are related by 
			\begin{equation}
				\widetilde{\mathbf{C}} = \mathbf{C}^{\top}.
			\end{equation}
	 \end{itemize}
\end{lem}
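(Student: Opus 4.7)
The plan is to prove each of the three bullets directly from the explicit combinatorial construction of the operator matrices given in Section~\ref{sec:space}. Every row of $\mathbf{C}$, $\mathbf{S}$, $\mathbf{G}$ and their duals contains only $\{-1,0,1\}$ entries encoding a signed incidence between two kinds of geometric objects on the hexahedral grid, so all three identities reduce to elementary counting arguments about shared boundaries and orientations. I would fix a generic primal volume, facet, edge or point and verify the identity entry-wise, then transfer the argument to the dual grid.

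For the nilpotence identities $\mathbf{S}\mathbf{C}=0$ and $\mathbf{C}\mathbf{G}=0$, I would compute a single entry of the product. In $\mathbf{S}\mathbf{C}$, the entry indexed by a primal volume $V(n)$ and a primal edge $L_\omega(m)$ equals the sum over the six facets of $V(n)$ of the signed incidence of $L_\omega(m)$ with those facets. If $L_\omega(m)$ is not an edge of $V(n)$, the sum is trivially zero; otherwise $L_\omega(m)$ is shared by exactly two of the six facets of $V(n)$, and the outward-orientation convention for the facets of a closed volume forces these two incidences to have opposite signs. An analogous argument with a primal facet in place of the volume shows $\mathbf{C}\mathbf{G}=0$: the four edges of a rectangular facet traverse its boundary as a closed loop, so every vertex enters along one edge and leaves along another, making the $\pm 1$ gradient contributions telescope to zero. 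The dual statements $\widetilde{\mathbf{S}}\widetilde{\mathbf{C}}=0$ and $\widetilde{\mathbf{C}}\widetilde{\mathbf{G}}=0$ follow verbatim, since $\widetilde{G}$ has the same local cell structure as $G$ away from $\Gamma$, and the truncation of dual edges and facets at the boundary is arranged precisely so that the outward-orientation bookkeeping still works on boundary cells.

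For $\mathbf{G}=-\widetilde{\mathbf{S}}^\top$ I would use the canonical bijections $P(n)\leftrightarrow\widetilde{V}(n)$ between primal points and dual volumes and $L_\omega(n)\leftrightarrow\widetilde{A}_\omega(n)$ between primal edges and dual facets. With this identification, the row of $\mathbf{G}$ indexed by $L_\omega(n)$ has entries $-1$ at $P(n)$ and $+1$ at $P(n+k_\omega)$, while the column of $\widetilde{\mathbf{S}}$ indexed by $\widetilde{A}_\omega(n)$ records the signed incidence of that dual facet with the two dual volumes $\widetilde{V}(n)$ and $\widetilde{V}(n+k_\omega)$ that it separates. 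The convention that $\widetilde{A}_\omega(n)$ inherits its normal from the orientation of $L_\omega(n)$ means that this normal points outward of $\widetilde{V}(n)$ but inward of $\widetilde{V}(n+k_\omega)$, yielding exactly the opposite signs and hence the global minus. The companion identity $\widetilde{\mathbf{G}}=-\mathbf{S}^\top$ is obtained by swapping the roles of the two grids. Finally, $\widetilde{\mathbf{C}}=\mathbf{C}^\top$ uses the bijections $L_\omega(n)\leftrightarrow\widetilde{A}_\omega(n)$ and $A_\omega(n)\leftrightarrow\widetilde{L}_\omega(n)$: a primal edge bounds a primal facet with the same $\pm 1$ incidence with which the dual edge dual to that facet bounds the dual facet dual to that edge, so the two incidence matrices are transposes of each other.

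The main obstacle, and the only step that is not pure bookkeeping, is nailing down the sign conventions so that the three identities come out with the correct global sign; in particular the minus in $\mathbf{G}=-\widetilde{\mathbf{S}}^\top$ is not automatic and hinges on the co-orientation of a dual facet with its dual primal edge. A single careful check on one Cartesian cell for each of the three coordinate directions suffices, and the boundary truncation of $\widetilde{G}$ does not interfere because the identities are entirely local. These properties are classical in the FIT literature and are recorded, in the form used here, in \cite{Weiland_1996aa}.
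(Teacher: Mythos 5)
Your argument is correct, and it is essentially the standard incidence-matrix proof that the paper itself does not reproduce: the text only records the statement and defers the proof to the literature (\cite{Weiland_1996aa}, and, for the version with potentials and after projection, \cite{Schops_2011ac,Baumanns_2012ab,Bossavit_2000ab}). Your entry-wise verification --- an edge of a hexahedron lies in exactly two of its six facets with opposite induced orientations (giving $\mathbf{S}\mathbf{C}=0$), the signed sum of gradient contributions around the four edges of a facet cancels at each corner (giving $\mathbf{C}\mathbf{G}=0$), and the bijections $P(n)\leftrightarrow\widetilde{V}(n)$, $L_\omega(n)\leftrightarrow\widetilde{A}_\omega(n)$, $A_\omega(n)\leftrightarrow\widetilde{L}_\omega(n)$ turning $\mathbf{G}$, $\mathbf{S}$, $\mathbf{C}$ into (negative) transposes of their dual counterparts --- is exactly the content of those references, so the two ``approaches'' coincide in substance; you simply make explicit what the paper outsources. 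Two small points deserve care. First, with the paper's convention that all edges point in positive coordinate directions (rather than cyclically around a facet), the cancellation in $\mathbf{C}\mathbf{G}=0$ comes from the $\pm1$ entries of $\mathbf{C}$ compensating the uniform $(-1,+1)$ pattern of $\mathbf{G}$, not from a literal ``enter/leave'' traversal; the conclusion is unchanged but the bookkeeping is slightly different from what your phrasing suggests. Second, the lemma as stated concerns the matrices built from the full numbering scheme, phantom objects included, where everything is a pure incidence identity on a completed grid; the truncation of dual objects at $\Gamma$ and the removal of phantom objects is handled separately by Assumption~\ref{ass:nophantoms}, and the paper notes (citing \cite{Baumanns_2012ab}) that the projected matrices retain these properties --- so your remark that the identities are ``entirely local'' is the right justification, but it belongs to that later step rather than to the lemma itself.
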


Furthermore, potentials can be introduced on the primal grid, i.e.
\begin{align}
  \protect\bow{\mathrm{\mathbf{e}}} = -\frac{\mathrm{d}}{\mathrm{d}t}\protect\bow{\mathrm{\mathbf{a}}} - \mathbf{G}{\boldsymbol\Phi}\;,
\end{align}
where $\protect\bow{\mathrm{\mathbf{a}}}$ is the line-integrated magnetic vector potential and ${\boldsymbol\Phi}$ the electric scalar potential located on primary nodes. This is similar 
to the definition of the potentials in the continuous case, i.e., \eqref{eq:a-phi}. The properties stated in this Lemma have been proven in 
\cite{Schops_2011ac,Baumanns_2012ab,Bossavit_2000ab}.

\bigskip

The numbering scheme explained in Section \ref{sec:numbering} yields matrices with a simple banded structure. The sparsity pattern is such that an 
efficient implementation may not construct those matrices explicitly but apply the corresponding operations as such to vectors. However, the 
numbering scheme introduces superfluous objects allocated outside of the domain $\Omega$. For example in the case of the points located at the 
boundary where $i_x = n_x$, an edge in $x$ direction $L_x(n_x,i_y,i_z)\notin\Omega$. Those objects are called \emph{phantom objects}. However, the 
homogeneous Dirichlet boundary conditions explained in Section \ref{sec:bc}, as well as the deletion of the phantom objects can be incorporated 
either by removing them with (truncated) projection matrices or by setting the corresponding degrees of freedom to zero. For a more detailed 
description of the process, see \cite{Baumanns_2012ab} and \cite[Appendix A]{Schops_2011ac}. 

\begin{ass}[Boundary conditions]\label{ass:nophantoms}
	The degrees of freedom and all the operators are projected to an appropriate subspace considering the homogeneous Dirichlet boundary (`ebc') conditions 
	and disregarding any phantom objects in $\widetilde{\mathbf{S}}$, $\widetilde{\mathbf{S}}^{\top}\!$, $\mathbf{C}$ and $\widetilde{\mathbf{C}}$. Therefore $\ker\widetilde{\mathbf{S}}^{\top} = 0$.
\end{ass}
This assumption imposes boundary conditions directly on the system matrices and thus is a necessary condition to ensure uniqueness of solution. It is important to note that the reduced matrices keep the properties described in Lemma~\ref{prop:topmatrices}, see for example \cite[Section 3.2.4]{Baumanns_2012ab}.

Please note that identical operators (without phantom objects) are obtained when applying the FE method with lowest-order Whitney basis functions using the same primal grid \cite{Bossavit_2000ab,Bondeson_2005aa}.

\subsection{Material matrices}
The degrees of freedom have been introduced as integrals and thus the discretisation did not yet introduce any approximation error. This however 
happens when applying the matrices describing the material relations. In the FE case, the material matrices are given by the integrals 
$$\left[\mathbf{M}_\xi\right]_{n,m}=\int_\Omega \mathbf{w}_n\cdot\xi\mathbf{w}_m\; \mathrm{d}\Omega\;,$$
where $\xi\in\{\boldsymbol{\sigma},\boldsymbol{\nu},\boldsymbol{\varepsilon}\}$ and $\mathbf{w}_\star$ are from an appropriate space, i.e., tangentially continuous N\'ed\'elec 
vectorial shape functions \cite{Nedelec_1980aa,Bossavit_1988aa} related to the $n$-th edge of the grid for discretising $\boldsymbol{\varepsilon}$ and 
$\boldsymbol{\sigma}$ and normally continuous Raviart-Thomas vectorial shape functions \cite{Raviart_1977aa,Bossavit_1988aa} for discretizing $\boldsymbol{\nu}$. 

In FIT, the matrix construction is derived from the Taylor expansion of the material laws. In the following, only the construction of the conductivity matrix is explained. For simplicity of notation, the conductivity $\sigma(\ensuremath{ \vec{r} })$ 
is assumed to be isotropic and conforming to the primal grid, i.e. $\sigma^{(n)}=\sigma(\ensuremath{ \vec{r} }_n)$ is constant on each primal volume ($\ensuremath{ \vec{r} }_n\in V(n)$). Consider a primal edge $L_z(i,j,k)$ and its associated dual facet $\widetilde{A}_z(i,j,k)$ (Figure~\ref{fig:sigma}).
\begin{figure}
	\centering
\includegraphics{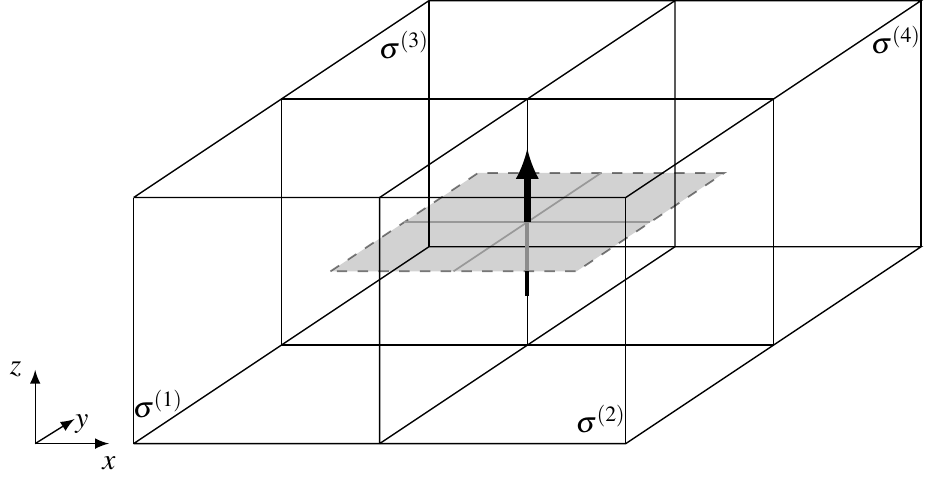}
	\caption{Sketch of dual facet $\widetilde{A}_z(i,j,k)$ with its normal vector.}\label{fig:sigma}
\end{figure}
The tangential component ${E}_z$ of the electric field strength is continuous along $L_z(i,j,k)$ and is found by approximation from
\begin{align*}
\protect\bow{\mathrm{\mathbf{e}}}_z(i,j,k) =& \int\displaylimits_{L_z(i,j,k)}\ensuremath{\vec{E}}\cdot\mathrm{d}\vec{s}
\approx {E}_z\;|L_z(i,j,k)|\;,
\end{align*}
where $|\cdot|$ denotes the length, area or volume depending on the object.
The current density integrated on the corresponding dual facet reads
\begin{align*}
\protect\bbow{\mathrm{\mathbf{j}}}_z(i,j,k) &= \int\displaylimits_{\widetilde{A}_z(i,j,k)}\ensuremath{ \vec{J} }\cdot\mathrm{d}\ensuremath{ \vec{A} } = 
\int\displaylimits_{\widetilde{A}_z(i,j,k)}J_z\;\mathrm{d}A =\mathlarger{\mathlarger{\sum}}_{q=1}^4 \ 
\int\displaylimits_{\widetilde{A}_z^{(q)}(i,j,k)}\!\!\!\!\sigma^{(q)}E_z\;\mathrm{d}A \\
&\approx \mathlarger{\mathlarger{\sum}}_{q=1}^4 \sigma^{(q)} E_z |\widetilde{A}_z^{(q)}(i,j,k)|
= \ensuremath{ \mathbf{M} }_{\sigma,i,j,k}\protect\bow{\mathrm{\mathbf{e}}}_z(i,j,k)\;,
\end{align*}
where the conductances $\ensuremath{ \mathbf{M} }_{\sigma,i,j,k}=\bar{\sigma}(i,j,k)\frac{|\widetilde{A}_z(i,j,k)|}{|L_z(i,j,k)|}$ include the conductivities
\begin{align*} 
	\bar{\sigma}(i,j,k) =\mathlarger{\mathlarger{\sum}}_{q=1}^4 \sigma^{(q)}\frac{|\widetilde{A}_z^{(q)}(i,j,k)|}{|\widetilde{A}_z(i,j,k)|}
\end{align*}
averaged according to the conductivities $\sigma^{(q)}$ of the primal grid cells $V^{(q)}$ surrounding $L_z(i,j,k)$ and the surface fractions 
$\widetilde{A}_z^{(q)}(i,j,k)=V^{(q)}\cap\widetilde{A}_z(i,j,k)$. Analogously, material matrices for $\varepsilon$ and $\nu$ are obtained, which lead to the 
discretised material relations
\begin{align*}
	\protect\bbow{\mathrm{\mathbf{d}}} = \mathbf{M}_{\varepsilon} \protect\bow{\mathrm{\mathbf{e}}} &\;,& \protect\bbow{\mathrm{\mathbf{j}}}_\mathrm{c} = \mathbf{M}_{\sigma} \protect\bow{\mathrm{\mathbf{e}}} &\;,& \protect\bow{\mathrm{\mathbf{h}}} = \mathbf{M}_{\nu} \protect\bbow{\mathrm{\mathbf{b}}}
\end{align*}
and 
\begin{align*}
  \protect\bbow{\mathrm{\mathbf{j}}} = \protect\bbow{\mathrm{\mathbf{j}}}_\mathrm{c}+\protect\bbow{\mathrm{\mathbf{j}}}_\mathrm{s}
\end{align*}
with the source current density $\protect\bbow{\mathrm{\mathbf{j}}}_\mathrm{s}$, which may be given by the discretisation $\mathbf{X}$ of the winding function \eqref{eq:winding}, 
such that $\protect\bbow{\mathrm{\mathbf{j}}}_\mathrm{s}=\sum_k\mathbf{X}_k\,i_k$ with currents $i_k$.

\bigskip

For the material matrices, one can show the following result \cite[Appendix A]{Schops_2011ac}.
\begin{lem}[Material Matrices]\label{prop:material}
	The material matrices $\ensuremath{ \mathbf{M} }_{\xi}$ are symmetric for all material properties $\xi = \{\sigma, \nu, \varepsilon\}$. If Assumption \ref{ass:material} holds, then the matrices $\ensuremath{ \mathbf{M} }_{\nu},\ensuremath{ \mathbf{M} }_{\varepsilon}$ are positive definite whereas $\ensuremath{ \mathbf{M} }_{\sigma}$ is only positive semidefinite.
\end{lem}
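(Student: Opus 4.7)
The strategy is to transfer the properties of the continuous material tensors $\boldsymbol{\varepsilon},\boldsymbol{\nu},\boldsymbol{\sigma}$ to the matrices $\mathbf{M}_\xi$ via the Galerkin identity
$$\mathbf{v}^\top\mathbf{M}_\xi\mathbf{v} = \int_\Omega \mathbf{u}_{\mathbf{v}}\cdot\xi\,\mathbf{u}_{\mathbf{v}}\,\mathrm{d}\Omega,\qquad \mathbf{u}_{\mathbf{v}} = \sum_n v_n\mathbf{w}_n,$$
working in the FE picture. As noted in the excerpt, FIT on hexahedral grids agrees with the lowest-order N\'ed\'elec/Raviart--Thomas construction up to midpoint quadrature, and this quadrature rule preserves both symmetry and the sign of the resulting quadratic form; hence it suffices to argue on the FE level.

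First I would establish symmetry. The physical tensors in Assumption~\ref{ass:material} are symmetric (as is standard for reciprocal media, and implicit in the notion of positive definiteness used there), so swapping indices in $[\mathbf{M}_\xi]_{n,m}=\int_\Omega \mathbf{w}_n\cdot\xi\,\mathbf{w}_m\,\mathrm{d}\Omega$ yields $[\mathbf{M}_\xi]_{m,n}$. For $\mathbf{M}_\nu$ and $\mathbf{M}_\varepsilon$, uniform positive definiteness of $\boldsymbol{\nu}$ and $\boldsymbol{\varepsilon}$ on $\Omega$ combined with linear independence of the Whitney basis on the grid $G$ gives $\mathbf{v}^\top\mathbf{M}_\xi\mathbf{v}\geq c\,\|\mathbf{u}_{\mathbf{v}}\|_{L^2(\Omega)}^2 > 0$ whenever $\mathbf{v}\neq 0$. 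For $\mathbf{M}_\sigma$ the same identity restricts the integral to $\Omega_\mathrm{c}$, which immediately gives positive semidefiniteness; definiteness fails by choosing any $\mathbf{v}$ supported on a basis function whose support lies entirely inside $\Omega\setminus\overline{\Omega_\mathrm{c}}$, producing a non-zero $\mathbf{u}_{\mathbf{v}}$ with $\mathbf{v}^\top\mathbf{M}_\sigma\mathbf{v}=0$.

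The main obstacle is the last point when re-expressed through the FIT construction: the diagonal entry of $\mathbf{M}_\sigma$ associated to an edge $L_z(i,j,k)$ averages the conductivity over the four surrounding primal cells, so an edge that lies in $\Omega\setminus\Omega_\mathrm{c}$ but merely touches $\overline{\Omega_\mathrm{c}}$ picks up a strictly positive contribution. One therefore has to verify that under Assumption~\ref{ass:material} the grid contains at least one edge whose four adjacent primal cells all lie in $\Omega\setminus\Omega_\mathrm{c}$, equivalently that the interface $\partial\Omega_\mathrm{c}$ is resolved with sufficient slack. Finally, I would check that applying the projection of Assumption~\ref{ass:nophantoms} preserves all three properties, which is immediate since selecting rows and columns of a symmetric positive (semi)definite matrix yields a symmetric positive (semi)definite principal submatrix.
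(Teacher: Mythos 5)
Your argument is essentially correct, but note that the paper does not actually prove Lemma~\ref{prop:material}: it states the result and defers the proof entirely to the cited reference \cite[Appendix A]{Schops_2011ac}. There is therefore no in-text proof to compare against; your Galerkin-identity route (symmetry of the material tensors gives symmetry of $\mathbf{M}_{\xi}$, uniform positive definiteness of $\boldsymbol{\nu}$ and $\boldsymbol{\varepsilon}$ together with linear independence of the Whitney basis gives definiteness, and restriction of the integrand to $\Omega_\mathrm{c}$ gives semidefiniteness of $\mathbf{M}_{\sigma}$) is the standard one and is what the cited source carries out; in the FIT picture the matrices are diagonal with nonnegative entries, so the same conclusions are immediate. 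Two remarks. First, you are right to flag that the clause ``only positive semidefinite'' -- read as asserting that $\mathbf{M}_{\sigma}$ is singular -- needs a mild mesh-resolution hypothesis: the FIT diagonal entry of an edge averages $\sigma$ over the four adjacent primal cells, so one needs at least one edge (equivalently, one basis function) whose support misses $\overline{\Omega}_\mathrm{c}$ entirely. Under Assumption~\ref{ass:material} this holds for any mesh that resolves the nonempty non-conducting subdomain $\Omega_\mathrm{s}$, but the paper glosses over it and you do well to make it explicit. Second, your final step is slightly incomplete: passing to the principal submatrix prescribed by Assumption~\ref{ass:nophantoms} preserves symmetry and positive (semi)definiteness, but it does not automatically preserve the \emph{singularity} of $\mathbf{M}_{\sigma}$ -- if every vanishing diagonal entry happened to correspond to a removed boundary or phantom object, the reduced matrix would become definite. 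Since the edges in $\Omega_\mathrm{s}$ are interior and survive the projection, this cannot occur here, but that observation, rather than the principal-submatrix fact alone, is what justifies the sentence ``preserves all three properties.''
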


Finally, the discretised version of Maxwell's equations with its corresponding material laws can be visualised by `Maxwell's house' shown in 
Figure~\ref{fig:tonti}.
\begin{figure}[t]
\centering

\includegraphics{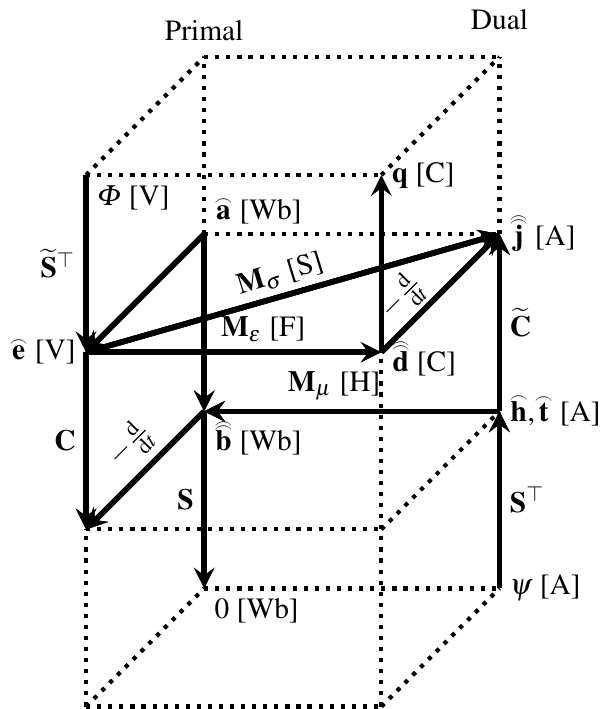}
 \caption{Maxwell's house after spatial discretisation.\label{fig:tonti}}
\end{figure}

\begin{remark}
	Both Lemma  \ref{prop:topmatrices} and \ref{prop:material}, as well as Assumption \ref{ass:nophantoms} hold for Finite Element discretisations with basis functions fulfilling a discrete de Rham sequence.
\end{remark}

\begin{remark}
	In many applications the material parameters, e.g. the reluctivity, in \eqref{eq:material} dependent nonlinearly on the fields. In these cases 
	one may consider the linearised system but since the differential material properties inherit the relevant properties, e.g.  \cite{Heise_1994aa}, 
	the characteristics of the DAE will also remain the same. In particular, there will be no change of nullspaces, see e.g. \cite{Bartel_2011aa}
\end{remark}

\section{Differential Algebraic Equations}\label{sec:DAE}
Starting from Maxwell's grid equations, various discrete time-domain formulations can be obtained. Depending on the choices made according to Definition~\ref{def:qs}, the 
resulting system is either static, first or second-order in time. In the dynamic cases, it can be written as a (linear) problem of the form 
\begin{align}
\label{eq:dae}
\mathbf{M}\frac{\mathrm{d}}{\mathrm{d}{}t}\,{}\mathbf{x}(t) + \mathbf{K} \mathbf{x}(t) = \mathbf{r}(t),
\quad\text{and}\quad\mathbf{x}(t_0)=\mathbf{x}_0
\end{align}
where $\mathbf{M}, \mathbf{K} \in \mathbbm{R}^{n\times n}$ are matrices, $\mathbf{x}: [t_0, T] \rightarrow \mathbbm{R}^n$
contains the time-dependent degrees of freedom and $\mathbf{r}: [t_0, T] \rightarrow \mathbbm{R}^n$ is an 
input.
\begin{definition}[DAE]
	Equation \eqref{eq:dae} is called a system of differential-algebraic
	equations (DAE) if $\mathbf{M}$ is singular.
\end{definition}

There are many options how to perform time-discretisation (`integration') of a DAE \eqref{eq:dae}, see for example \cite{Hairer_1996aa}. We suggest 
the 
simplest approach: implicit Euler's method, i.e.,
\begin{align}
\label{eq:euler}
\left(\mathbf{M}/{\Delta t}+\mathbf{K}\right) \mathbf{x}_{n+1} = \mathbf{r}(t_{n+1})+\mathbf{M}/{\Delta t}\;\mathbf{x}_{n}
\end{align}
where $\mathbf{x}_{n}\stackrel{\cdot}{=}\mathbf{x}(t_n)$ and $\Delta t = t_{n+1}-t_n$ is the time step.
DAEs are commonly classified according to their \emph{index}. Intuitively, it can be seen as a measure of the equations' 
sensitivity to perturbations of the input and the numerical difficulties when integrating. There are several competing index concepts. They 
essentially agree in the case of regular, linear problems, see \cite{Mehrmann_2015aa} for detailed discussion. Therefore, we employ the simplest 
concept
\begin{definition}[Differential Index, \cite{Brenan_1995aa}]
	If solvable and the right-hand-side is smooth enough, then the DAE \eqref{eq:dae} has differential index-$\vartheta$ if $\vartheta$ is the minimal number of analytical differentiations with respect
	to the time $t$ that are necessary to obtain an ODE for $d\mathbf{x}/dt$
	as a continuous function in $\mathbf{x}$ and $t$ by algebraic manipulations only.
\end{definition}

For $\vartheta\geq2$ the time-integration becomes difficult. Let us consider the classical educational index-2 problem to motivate analytically the sensitivity with respect to perturbations. The problem is described by
\begin{align}
  \label{eq:index2perturb}
  \frac{\mathrm{d}}{\mathrm{d}t}{x}_1 = x_2 
  \quad \text{and} \quad
     x_1  = \sin(t) + \delta(t)
\end{align}
where $\delta(t) = 10^{-k}\sin(10^{2k}t)$ is a small perturbation with $k\gg 1$. The solution $x_2 = \cos(t) + 10^{k}\cos(10^{2k}t)$ is easily 
obtained by the product and chain rules. It shows that a very small perturbation in an index-2 system (at a high frequency) can have a serious impact 
(in the order of $10^{k}$) on the solution when compared to the original solution  $x_2=\cos(t)$ of the unperturbed problem where $\delta=0$.

\begin{remark}
	For the index analysis in the following sections we assume that the right-hand sides are smooth enough.
\end{remark}

Furthermore, DAEs are known for the fact that solutions have to
fulfil certain constraints. One of the difficult parts in solving DAEs
numerically is to determine a consistent set of initial conditions in order
to start the integration \cite{Estevez-Schwarz_1999ab, Marz_2003aa, Baumanns_2010aa}.

\begin{remark} \cite{Lamour_2013aa}
	A vector $\mathbf{x}_0\in\mathbbm{R}^n$ is called a consistent initial value if there is a solution of \eqref{eq:dae} through $\mathbf{x}_0$ at 
	time $t_0$.
\end{remark}

The problems discussed in the following will have at most (linear) index-2 components. For this case it has be shown that if we are not interested in 
a consistent initialisation at time $t_0$ but accept a solution satisfying the DAE only after the first step, then one may apply the implicit Euler 
method starting with an operating point and still obtain the same solution after $t>t_0$ that one would have obtained using a particular consistent 
value 
\cite{Baumanns_2010aa,Baumanns_2012ab}.

\medskip

The aim of this paper is to study the index of the systems obtained with different formulations and approximations according to Definition~\ref{def:qs}. \section{Full-Wave Formulation}\label{sec:fullwave}
On first sight it seems optimal to analyse high-frequency electromagnetic phenomena, e.g. the radiation of antennas, in frequency domain. The 
right-hand-sides can often be assumed to vary sinusoidally and for a given frequency, the equations are linear as the materials are rather frequency 
than field-dependent. However, the solution of problems in frequency domain requires the resolution of very large systems of equations and becomes 
inconvenient if one is interested in many frequencies (\emph{broadband solution}). Therefore, often time-domain simulations are carried out with 
right-hand-sides that excite a large frequency spectrum.

\subsection{First-Order Formulation Time-Stepped by Leapfrog}
When solving Maxwell's grid equations for lossless ($\boldsymbol{\sigma}\equiv0$) wave propagation problems in time domain, a problem formulation based on 
the electric and magnetic field is commonly proposed. Assuming that the initial conditions fulfil the divergence relations of 
System~\eqref{eq:maxwell}, one starts with Faraday's and Amp\`ere's laws
\begin{align*}
	\frac{\partial{\ensuremath{\vec{B}}}}{\partial{{t}}} +\nabla\times\ensuremath{\vec{E}}  =0
	\quad\text{and}\quad
	\frac{\partial{\ensuremath{\vec{D}}}}{\partial{{t}}} -\nabla\times\ensuremath{\vec{H}}  =\ensuremath{\vec{J}}_\textrm{s}\;.
\end{align*}
After inserting the material laws, the system becomes
\begin{align*}
	\boldsymbol{\mu}\frac{\partial{\ensuremath{\vec{H}}}}{\partial{{t}}} + \nabla\times\ensuremath{\vec{E}} = 0 
	\quad\text{and}\quad
	\boldsymbol{\varepsilon}\frac{\partial{\ensuremath{\vec{E}}}}{\partial{{t}}} - \nabla\times\ensuremath{\vec{H}}  = \ensuremath{\vec{J}}_\textrm{s}\;,
\end{align*}
with $\boldsymbol{\nu}=\boldsymbol{\mu}^{-1}$. Using Maxwell's grid equations~\eqref{eq:maxwell_grid}, 
the semi-discrete initial value problem (IVP) has the form of equation \eqref{eq:dae}
with unknown voltages ${\mathbf{x}}^{\top} := [\protect\bow{\mathrm{\mathbf{h}}}^{\top},\protect\bow{\mathrm{\mathbf{e}}}^{\top}]$, right-hand-side ${\mathbf{r}}^{\top}:=[0,\protect\bbow{\mathrm{\mathbf{j}}}_\mathrm{s}^{\top}]$ and matrices
\begin{align}
    \mathbf{M} := 
    \begin{bmatrix}
        \mathbf{M}_\nu^{-1} &  0 \\
        0              & \mathbf{M}_{\varepsilon}  
    \end{bmatrix}
    \quad \text{and} \quad 
    \mathbf{K} := 
    \begin{bmatrix}
        0  & \mathbf{C} \\
        -\widetilde{\mathbf{C}}       & 0  
    \end{bmatrix}. \label{eq:fit_matrices}
\end{align}
If Assumptions~\ref{ass:material}  and \ref{ass:nophantoms} holds, all superfluous degrees of freedom are removed and the material matrices 
$\mathbf{M}_\nu$ and $\mathbf{M}_{\varepsilon}$ have 
full rank. With FIT the matrices are furthermore diagonal and thus easily inverted. A transformation by the matrices $\mathbf{M}_\nu^{-1/2}$ and 
$\mathbf{M}_\varepsilon^{1/2}$ allows us to rewrite \eqref{eq:dae} as
\begin{align}
    \frac{\mathrm{d}}{\mathrm{d}{}t}\,{}\bar{\mathbf{x}}(t) &= \bar{\mathbf{K}} \bar{\mathbf{x}}(t)+\bar{\mathbf{r}}(t) & \bar{\mathbf{x}}(t_0) &= \bar{\mathbf{x}}_0
    \label{eq:FIT_ode_2}
\end{align}
in the new unknowns $\bar{\mathbf{x}}^{\top}=[(\mathbf{M}_\nu^{-1/2}\protect\bow{\mathrm{\mathbf{h}}})^{\top},(\mathbf{M}_\epsilon^{1/2}\protect\bow{\mathrm{\mathbf{e}}})^{\top}]$
with the skew-symmetric stiffness matrix
\begin{align}
 \bar{\mathbf{K}}=\begin{bmatrix}
  0 & -\mathbf{M}_\nu^{1/2}\mathbf{C}\mathbf{M}_{\varepsilon}^{-1/2}\\
  \mathbf{M}_{\varepsilon}^{-1/2}\widetilde{\mathbf{C}}\mathbf{M}_\nu^{1/2} & 0
 \end{bmatrix}.
\end{align}
and right-hand-side $\bar{\mathbf{r}}^{\top}=[0,(\mathbf{M}_{\varepsilon}^{-1/2}\protect\bbow{\mathrm{\mathbf{j}}}_\text{s})^{\top}]$. Let us conclude this by the following result

\begin{theorem}
Let Assumptions~\ref{ass:dom}, \ref{ass:material} and \ref{ass:nophantoms} hold. Then, the semidiscrete full-wave Maxwell equations expressed in the 
field strengths, i.e., 
\eqref{eq:FIT_ode_2} are an explicit system of ordinary differential equations.
\end{theorem}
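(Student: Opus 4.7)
The plan is to show that the mass matrix $\mathbf{M}$ in \eqref{eq:fit_matrices} is nonsingular, from which explicitness follows by inversion, and then verify that the change of variables producing \eqref{eq:FIT_ode_2} is well-defined. The DAE character would only arise if $\mathbf{M}$ were singular, so the entire claim reduces to an invertibility argument on the block-diagonal matrix $\mathbf{M} = \mathrm{diag}(\mathbf{M}_\nu^{-1}, \mathbf{M}_\varepsilon)$.

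First, I would invoke Assumption~\ref{ass:material} together with Lemma~\ref{prop:material} to conclude that $\mathbf{M}_\nu$ and $\mathbf{M}_\varepsilon$ are symmetric positive definite. Here I would explicitly note that Assumption~\ref{ass:nophantoms} is needed to remove the phantom (and Dirichlet) degrees of freedom on which the unprojected material matrices would be merely positive semidefinite; after projection to the admissible subspace both matrices have full rank. Hence $\mathbf{M}_\nu^{-1}$ exists and is itself symmetric positive definite, so the block-diagonal matrix $\mathbf{M}$ is symmetric positive definite, in particular nonsingular. By the definition of a DAE, the semidiscrete problem therefore does not fall into the DAE class; it is an ODE.

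Next I would carry out the explicit reduction. Because $\mathbf{M}_\nu$ and $\mathbf{M}_\varepsilon$ are symmetric positive definite, they admit symmetric positive definite square roots $\mathbf{M}_\nu^{1/2}$ and $\mathbf{M}_\varepsilon^{1/2}$, both invertible. The change of variable $\bar{\mathbf{x}} := \mathrm{diag}(\mathbf{M}_\nu^{-1/2},\mathbf{M}_\varepsilon^{1/2})\,\mathbf{x}$ is therefore a bijection, and multiplying \eqref{eq:dae} on the left by $\mathrm{diag}(\mathbf{M}_\nu^{1/2},\mathbf{M}_\varepsilon^{-1/2})$ transforms the mass matrix to the identity, yielding \eqref{eq:FIT_ode_2}. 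Using $\widetilde{\mathbf{C}}=\mathbf{C}^{\top}$ from Lemma~\ref{prop:topmatrices}, one verifies that the resulting $\bar{\mathbf{K}}$ is skew-symmetric, which is a consistency check rather than a necessary ingredient of the proof.

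There is no real obstacle here: the whole theorem is essentially a bookkeeping statement that under Assumptions~\ref{ass:dom}, \ref{ass:material} and \ref{ass:nophantoms} the two material matrices appearing in the leading term are invertible. The only subtle point worth emphasising is that Assumption~\ref{ass:nophantoms} cannot be dropped, since the unreduced FIT material matrices associated with boundary or phantom edges would introduce a nontrivial kernel in $\mathbf{M}$, turning the system into a (trivial) DAE with algebraic constraints merely expressing the absence of those degrees of freedom. Once the projection of Assumption~\ref{ass:nophantoms} is in place, the argument is a one-line invocation of Lemma~\ref{prop:material}.
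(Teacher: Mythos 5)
Your proposal is correct and follows essentially the same route as the paper: the text preceding the theorem argues exactly that Assumptions~\ref{ass:material} and \ref{ass:nophantoms} give full-rank (positive definite) material matrices $\mathbf{M}_\nu$ and $\mathbf{M}_{\varepsilon}$, so the block-diagonal mass matrix is invertible, and the square-root transformation then yields the explicit ODE \eqref{eq:FIT_ode_2} with skew-symmetric $\bar{\mathbf{K}}$ via $\widetilde{\mathbf{C}}=\mathbf{C}^{\top}$. Your additional emphasis on why Assumption~\ref{ass:nophantoms} cannot be dropped is a sound elaboration of the paper's brief remark that the superfluous degrees of freedom must be removed.
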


\begin{figure}[t]
  \centering
  \includegraphics[width=.8\textwidth]{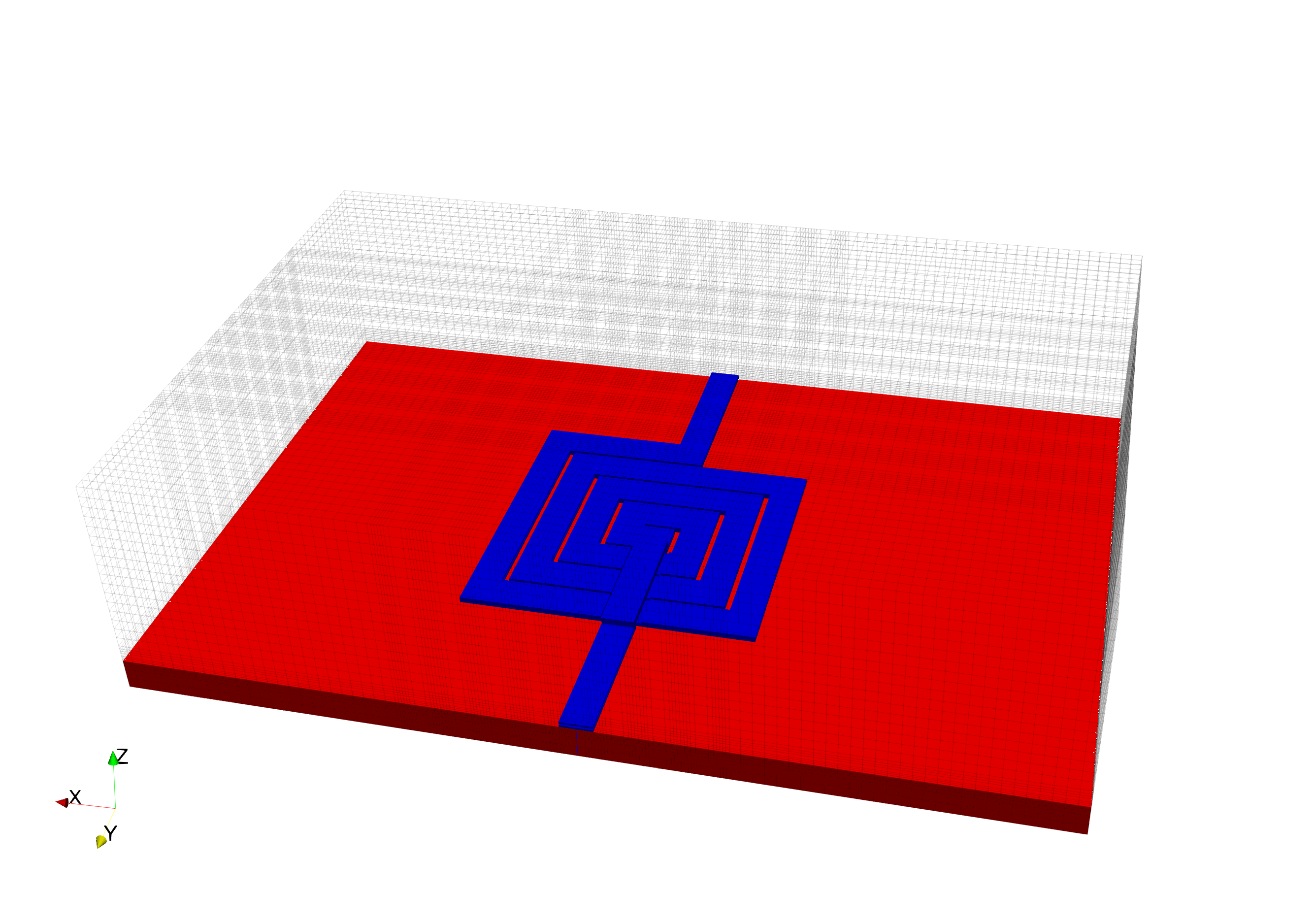}
  \caption{Spiral inductor model with coplanar lines located 
on a substrate layer with an air bridge (Benchmark \ref{bench:full})}
  \label{fig:spiral}
\end{figure}

The resulting IVP could be readily solved by the implicit Euler method \eqref{eq:euler} or any method that is tailored for second order differential equations. However, as explained above FIT allows to efficiently invert the mass matrix $\mathbf{M}$ and thus explicit methods become interesting. Typically the leapfrog scheme (or equivalently St\"{o}rmer-Verlet) are used \cite{Taflove_2007aa}. The restriction on the time step size related to the Courant–Friedrichs–Lewy-condition (CFL) is tolerable if the dynamics of the right-hand-side are in a similar order of magnitude. Leapfrog is second-order accurate and symplectic, which is particularly interesting if there is no damping, i.e., no conductors present ($\boldsymbol{\sigma}\equiv0$). Furthermore it can be shown that space and time errors are well balanced when using the leapfrog scheme with the a time step size close to the CFL limit ("magic time step") \cite[Chapters 2.4 and 4]{Taflove_1995aa}.

\bigskip

Let the initial conditions be
\begin{align*}
	\protect\bow{\mathrm{\mathbf{e}}}^{(0)}=\protect\bow{\mathrm{\mathbf{e}}}_{0} \quad\text{and\quad} \protect\bow{\mathrm{\mathbf{h}}}^{(\frac{1}{2})}=\protect\bow{\mathrm{\mathbf{h}}}_{1/2} \quad ,
\end{align*}
then the update equations for the leapfrog scheme read \cite{Weiland_1977aa,Weiland_1996aa,Taflove_1995aa}
\begin{align*}
	\protect\bow{\mathrm{\mathbf{e}}}^{(m+1)}&=\protect\bow{\mathrm{\mathbf{e}}}^{(m)}+\Delta t \mathbf{M}_{\varepsilon}^{-1}\left(\widetilde{\mathbf{C}}\protect\bow{\mathrm{\mathbf{h}}}^{(m+\frac{1}{2})}-\protect\bbow{\mathrm{\mathbf{j}}}^{(m+\frac{1}{2})}\right),\\
	\protect\bow{\mathrm{\mathbf{h}}}^{(m+\frac{3}{2})}&=\protect\bow{\mathrm{\mathbf{h}}}^{(m+\frac{1}{2})}-\Delta t \mathbf{M}_\nu \widetilde{\mathbf{C}}\protect\bow{\mathrm{\mathbf{e}}}^{(m+1)}
\end{align*}
for the electric and magnetic voltages $\protect\bow{\mathrm{\mathbf{e}}}^{(m)}$, $\protect\bow{\mathrm{\mathbf{h}}}^{(m+\frac{1}{2})}$ at time instants $t_m$ and $t_{m+\frac{1}{2}}$ with step size $\Delta t$. For equidistant grids, the resulting scheme is (up to scaling and interpretation) equivalent to Yee's FDTD scheme \cite{Yee_1966aa}. 

\begin{remark}
In practice, one may choose to violate Assumption~\ref{ass:nophantoms}. Instead one imposes the boundary conditions by setting the corresponding 
entries in the material matrices $\mathbf{M}_{\varepsilon}^{-1}$ and $\mathbf{M}_\nu$ to zero. In this case the system \eqref{eq:fit_matrices} comes with additional 
(trivial) equations when compared to a system that is projected to the lower dimensional subspace containing the boundary conditions. However, this 
preserves a simpler structure of the equation system and the topological grid operators, e.g. the discrete curl matrix $\mathbf{C}$, keep their banded 
structure. 
\end{remark}

\begin{figure}[t]
	\centering
	\subfigure[x-y cross section of spiral inductor.]{
		\includegraphics{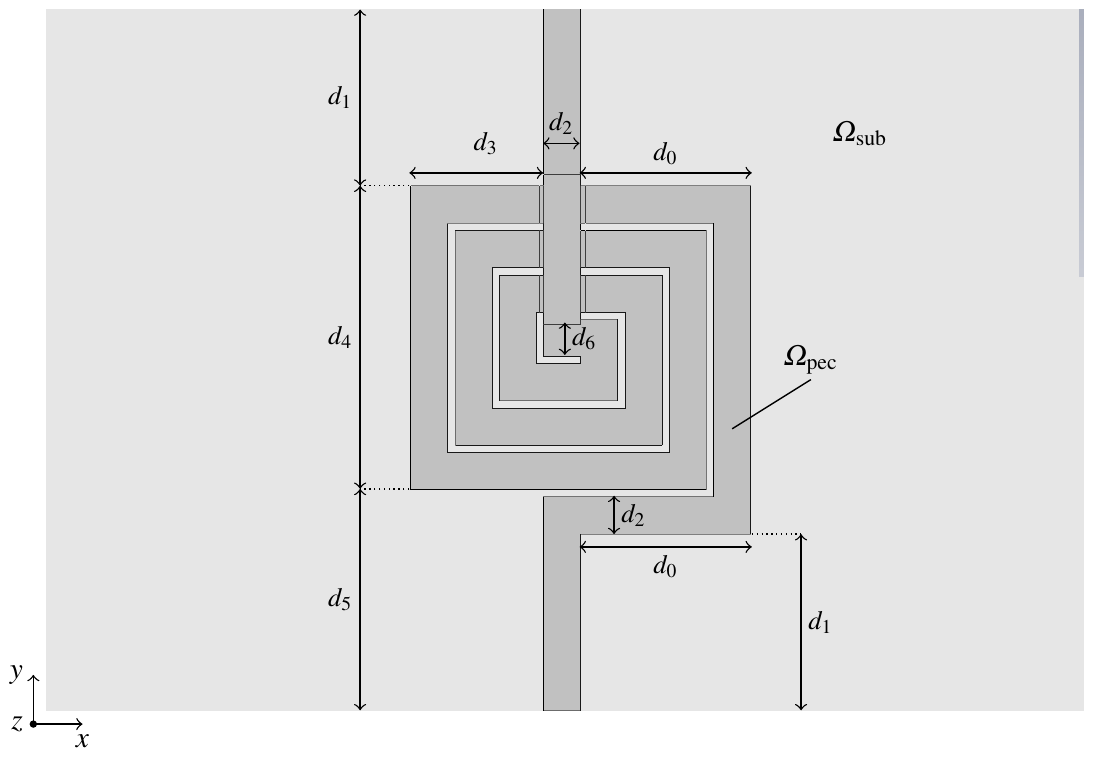}
			
	}
	\subfigure[y-z cross section of spiral inductor.]{
		\includegraphics{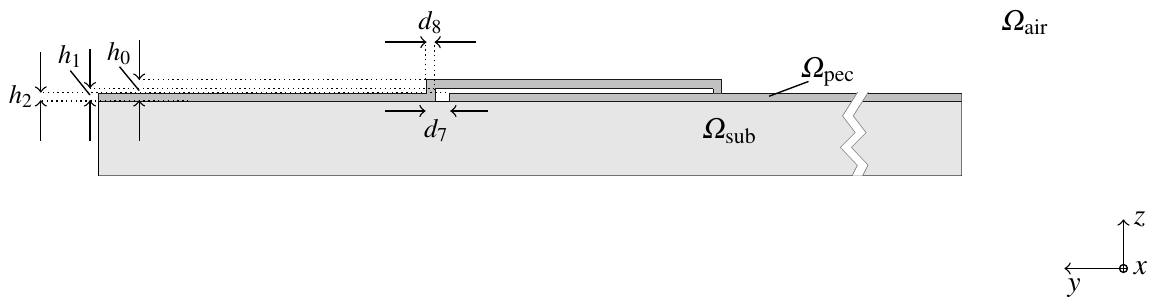}
			}
	\caption{Model of a spiral inductor of Benchmark \ref{bench:full}. The distances are $d_{0} = 1.15\cdot10^{-4}\,\si{\metre}$, 
		$d_{1} = 1.2\cdot10^{-4}\,\si{\metre}$, $d_{2} = 2.5\cdot10^{-5}\,\si{\metre}$, $d_{3} = 9\cdot10^{-5}\,\si{\metre}$, $d_{4} = 
		2.05\cdot10^{-4}\,\si{\metre}$, $d_{5} = 1.5\cdot10^{-4}\,\si{\metre}$, $d_{6} = 2.2\cdot10^{-5}\,\si{\metre}$, $d_{7} = 
		9\cdot10^{-6}\,\si{\metre}$, $d_{8} = 3\cdot10^{-6}\,\si{\metre}$, 
		$h_{0} = 8\cdot10^{-6}\,\si{\metre}$, $h_{1} = 5\cdot10^{-6}\,\si{\metre}$ and $h_{2} = 3\cdot10^{-6}\,\si{\metre}$.}
	\label{fig:spiral2}
\end{figure}

\begin{benchmark}\label{bench:full}
In \cite{Becks_1992aa} a spiral inductor model with coplanar lines located 
on a substrate layer with an air bridge was proposed as a benchmark example for high-frequency problems. The CST Microwave tutorial discusses the 
same model to advocate the usage of 3D field simulation instead of circuit models \cite{CST_2016aa}. A slightly simplified geometry is illustrated in 
Figure \ref{fig:spiral}. The dimensions of the layer are $\SI{7e-4}{m} \times \SI{4.75e-4}{m} \times \SI{2.5e-5}{m}$ and Figure \ref{fig:spiral2} 
illustrates the dimensions of the coil. 

The bottom of the substrate layer is constrained by ebc and the other five boundaries are by mbc. On each side of the bridge the coil is connected by 
a straight line of perfect conductor with the ebc bottom plane. One side is excited by a discrete port which is given by a current source 
$i(t)=\sin(2\pi f t)\mathrm{A}$ with $f=\SI{50e9}{1/s}$. 
The coil ($\Omega_{\mathrm{pec}}$) is assumed to be a perfect conductor, i.e., modelled by 
homogeneous electric boundary conditions, the substrate ($\Omega_{\mathrm{sub}}$) is given a relative permittivity of $\varepsilon_\text{r}=12$, in the 
air region $\Omega_{\mathrm{air}}$ $\varepsilon_\text{r}=1$  and vacuum permeability $\mu = 4\pi\cdot$\SI{1e-7}{H/m} is assumed everywhere else.

 The 
structure is discretised using FIT with $406493$ mesh cells and $1,283,040$ degrees of freedom. Leapfrog is used with a time step of $\Delta 
t=\SI{3.4331e-15}{s}$ based on the CFL condition  and zero initial condition, see Fig.~\ref{fig:spiral_iv}. The performance of leapfrog and 
exponential integrators for this model was recently discussed in 
\cite{Merkel_2017ac}.
\end{benchmark}

\begin{figure}
  \centering
  \subfigure[Current through the spiral inductor]{
    \includegraphics{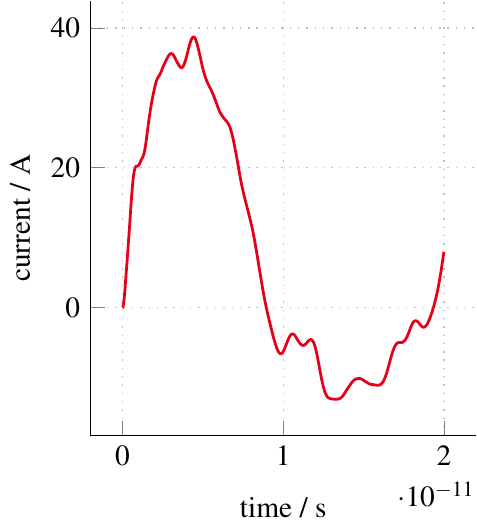}
      }
  \hspace{0.05\textwidth}
  \subfigure[voltage drop at the ports]{
    \includegraphics{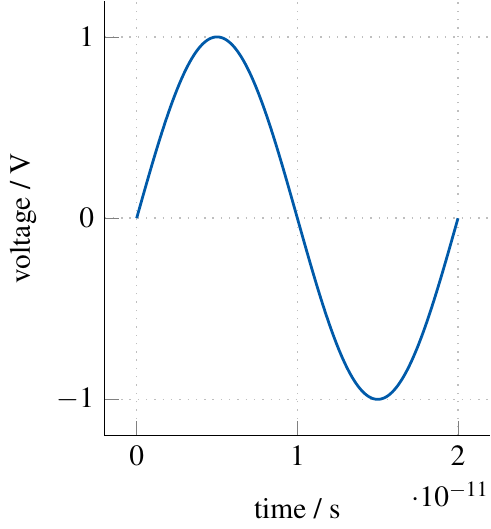}
      }
  \caption{Time domain simulation results for the Benchmark~\ref{bench:full}.\label{fig:spiral_iv}}
\end{figure}
\subsection{A-$\Phi$ formulations}
If there are small geometric features, slowly varying excitations, conducting or semiconducting materials \cite{Chen_2013aa}, the leapfrog scheme becomes inefficient. An alternative formulation is obtained if one rewrites Maxwell's equations as a second-order partial differential equation by combining Faraday's law, Amp\`ere's law and the material equations complemented by Gauss's law, i.e.,
\begin{align}
\boldsymbol{\varepsilon}\frac{\partial^2{}}{\partial{{t}}^2}\vec{E}
+
\boldsymbol{\sigma}\frac{\partial{}}{\partial{{t}}}\vec{E}
+
\nabla\times\nu\nabla\times\vec{E}
&=
\frac{\partial{}}{\partial{{t}}}\vec{J}_\text{s}.
\\
-\nabla\cdot\boldsymbol{\epsilon}\ensuremath{\vec{E}}
&=
\rho
\end{align}
The inconvenience of a time-differentiated source current density can be mitigated by exploiting the potentials as defined in \eqref{eq:a-phi}
\begin{align}\label{eq:curlcurl-a-phi}
\boldsymbol{\varepsilon}\frac{\partial^2{}}{\partial{{t}}^2}\vec{A}+\frac{\partial{}}{\partial{{t}}}\boldsymbol{\varepsilon}\nabla\phi
+
\boldsymbol{\sigma}\frac{\partial{}}{\partial{{t}}}\vec{A}+\boldsymbol{\sigma}\nabla\phi
+
\nabla\times\nu\nabla\times\vec{A}
&=
\vec{J}_\text{s}
\\
-\nabla\cdot\boldsymbol{\varepsilon}\frac{\partial{}}{\partial{{t}}}\vec{A}-\nabla\cdot\boldsymbol{\varepsilon}\nabla\phi
&=
\rho.
\end{align}
There is an ambiguity of the electromagnetic potentials since $\ensuremath{\vec{A}}$ is only fixed up to a gradient field \cite{Jackson_1998aa}. To this end, several gauging techniques have been introduced. For example grad-div formulations
that are based on the Coulomb gauge, have been introduced for low frequencies \cite{Bossavit_2001ab, Clemens_2002aa, Clemens_2011aa} and high-frequency applications \cite{Hahne_1992aa}. 

Let us define a regularisation of the electrodynamic potentials by the following gauge condition
\begin{align}
\label{eq:lorenz}
\xi_1\nabla\cdot\ensuremath{\vec{A}} + \xi_2 \phi  +\xi_3 \frac{\partial{}}{\partial{{t}}}\phi  = 0
\end{align}
which yields for $\xi_1=1$, $\xi_2=\xi_3=0$ the \emph{Coulomb} gauge and for $\xi_1=\nu$, $\xi_2=0$ and $\xi_3=\varepsilon$ the \emph{Lorenz} 
gauge if the considered materials are conducting, uniform, isotropic and linear. In the case $\xi_1=\nu$, $\xi_2=\sigma$ and $\xi_3=\varepsilon$ the curl-curl equation \eqref{eq:curlcurl-a-phi} can be written as a pair of damped wave equations 
\begin{align}
\label{eq:wave1}
\left[ \Delta - \mu\sigma\frac{\partial{}}{\partial{{t}}} -\mu\varepsilon\frac{\partial^2{}}{\partial{{t}}^2}\right]\ensuremath{\vec{A}} &= - \mu \ensuremath{\vec{J}}_\text{s}\\
\label{eq:wave2}
\left[ \Delta - \mu\sigma\frac{\partial{}}{\partial{{t}}} -\mu\varepsilon\frac{\partial^2{}}{\partial{{t}}^2}\right]\phi  &=  - \frac{\rho}{\varepsilon}
\end{align}
where $\Delta$ denotes the (scalar and vector) Laplace operators. In the undamped case ($\sigma=0$), this system reduces to the well-known d’Alembert equations, \cite{Jackson_1998aa}. The right-hand-sides are still coupled via the continuity equation \eqref{eq:continuity_diff}
\begin{align}
\label{eq:continuity}
\nabla\cdot\ensuremath{\vec{J}}_\text{s} + \frac{\sigma}{{\varepsilon}}\rho +\frac{\partial{}}{\partial{{t}}}\rho = 0
\end{align}  
where we have again exploited isotropy and homogeneity of $\sigma$ and $\ensuremath{\varepsilon}$ to obtain $-\nabla\cdot\sigma\ensuremath{\vec{E}}=\frac{\sigma}{\ensuremath{\varepsilon}}\rho$.
When solving the system \eqref{eq:wave1}-\eqref{eq:continuity}
we have to ensure that the (generalised) Lorenz gauge \eqref{eq:lorenz} is still fulfilled, which requires compatible boundary conditions for $\ensuremath{\vec{A}}$ and $\phi$ \cite{Baumanns_2013aa}.

\bigskip

Now, let us derive a similar semidiscrete formulation based on the spatial discretisation introduced above. We start with the $A-\phi$ formulation 
\eqref{eq:curlcurl-a-phi} using the discretised laws of Amp\`ere \eqref{eq:ampere_fit} and Gauss \eqref{eq:gauss_fit}:
\begin{align}
	\label{eq:curlcurl} 
	\!\!\!
	\tilde{\mathbf{C}}\mathbf{M}_{\nu}\mathbf{C}\protect\bow{\mathrm{\mathbf{a}}}\!+\!\mathbf{M}_{\sigma}\!\left[\frac{\mathrm{d}}{\mathrm{d}{}t}\,{}\protect\bow{\mathrm{\mathbf{a}}}+
	\mathbf{G}{\boldsymbol\Phi}\right]\!\! + \!
	\mathbf{M}_{\varepsilon}\!\left[\frac{\mbox{d}^2{}}{\mbox{d}{{t}}^2}\protect\bow{\mathrm{\mathbf{a}}}+\mathbf{G}\frac{\mathrm{d}}{\mathrm{d}{}t}\,{}{\boldsymbol\Phi}\right]\!
	&=\!\protect\bbow{\mathrm{\mathbf{j}}}_\mathrm{s}\;\!\\
	\label{eq:laplace}
	-\tilde{\mathbf{S}}\mathbf{M}_{\varepsilon}\frac{\mathrm{d}}{\mathrm{d}{}t}\,{}\protect\bow{\mathrm{\mathbf{a}}}+\mathbf{L}_{\varepsilon}{\boldsymbol\Phi}\!
	&=\!\mathrm{\mathbf{q}}\;\!
\end{align}
which contains the discrete Laplace operators
\begin{align}
	\label{eq:discrete_laplace} 
	\mathbf{L}_{\varepsilon}:=-\tilde{\mathbf{S}}\mathbf{M}_{\varepsilon}\mathbf{G}\quad\text{and}\quad
	\mathbf{L}_{\sigma}:=-\tilde{\mathbf{S}}\mathbf{M}_{\sigma}\mathbf{G}\;,
\end{align}
for permittivity and conductivity, respectively. 

\begin{lem}[Discrete Laplacians]
Let Assumptions~\ref{ass:material} and \ref{ass:nophantoms} hold true, the discrete Laplace operator $\mathbf{L}_{\varepsilon}$  in 
\eqref{eq:discrete_laplace} is symmetric positive definite and $\mathbf{L}_{\sigma}$ is symmetric positive semidefinite.
\end{lem}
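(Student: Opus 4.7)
The plan is to reduce the claim to a standard "congruence" argument of the form $\mathbf{G}^{\top}\mathbf{M}\mathbf{G}$, after which symmetry and (semi)definiteness follow immediately from the properties of $\mathbf{M}_\varepsilon$ and $\mathbf{M}_\sigma$ already collected in Lemma~\ref{prop:material}. The only non-routine ingredient is establishing that the relevant congruence is \emph{strictly} definite for $\mathbf{L}_\varepsilon$, which amounts to ruling out a nontrivial kernel of the discrete gradient under the boundary-condition assumption.

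First, I would use the identity $\mathbf{G} = -\widetilde{\mathbf{S}}^{\top}$ from Lemma~\ref{prop:topmatrices} to rewrite
\begin{align*}
\mathbf{L}_\xi \;=\; -\widetilde{\mathbf{S}}\,\mathbf{M}_\xi\,\mathbf{G} \;=\; \mathbf{G}^{\top}\,\mathbf{M}_\xi\,\mathbf{G}, \qquad \xi\in\{\varepsilon,\sigma\}.
\end{align*}
Symmetry of $\mathbf{L}_\varepsilon$ and $\mathbf{L}_\sigma$ is then immediate from the symmetry of $\mathbf{M}_\varepsilon$ and $\mathbf{M}_\sigma$ stated in Lemma~\ref{prop:material}, since $(\mathbf{G}^{\top}\mathbf{M}_\xi\mathbf{G})^{\top} = \mathbf{G}^{\top}\mathbf{M}_\xi^{\top}\mathbf{G} = \mathbf{G}^{\top}\mathbf{M}_\xi\mathbf{G}$.

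For positive semidefiniteness of both operators, I would pick an arbitrary vector $\mathbf{x}$ and compute
\begin{align*}
\mathbf{x}^{\top}\mathbf{L}_\xi\mathbf{x} \;=\; (\mathbf{G}\mathbf{x})^{\top}\mathbf{M}_\xi (\mathbf{G}\mathbf{x}) \;\geq\; 0,
\end{align*}
using that $\mathbf{M}_\varepsilon$ is positive definite and $\mathbf{M}_\sigma$ positive semidefinite. This already gives the claim for $\mathbf{L}_\sigma$.

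The main (only) obstacle is upgrading this to strict positive definiteness for $\mathbf{L}_\varepsilon$, which requires showing that $\mathbf{x}^{\top}\mathbf{L}_\varepsilon\mathbf{x}=0$ forces $\mathbf{x}=0$. Here positive definiteness of $\mathbf{M}_\varepsilon$ implies $\mathbf{G}\mathbf{x}=0$, so it remains to argue $\ker \mathbf{G}=\{0\}$. This is precisely where Assumption~\ref{ass:nophantoms} enters: phantom objects and ebc-nodes have been projected out, so $\ker\widetilde{\mathbf{S}}^{\top}=0$, and since $\mathbf{G}=-\widetilde{\mathbf{S}}^{\top}$ we conclude $\ker\mathbf{G}=\{0\}$. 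Hence $\mathbf{G}\mathbf{x}=0$ implies $\mathbf{x}=0$, completing the proof of strict positive definiteness of $\mathbf{L}_\varepsilon$. (Physically this reflects that the only gradient field with vanishing boundary trace on a contractible domain satisfying Assumption~\ref{ass:dom} is the zero field; the corresponding discrete statement is captured by the projection described in Assumption~\ref{ass:nophantoms}.)
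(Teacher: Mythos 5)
Your proof is correct and follows exactly the route the paper intends: rewrite $\mathbf{L}_\xi = \mathbf{G}^{\top}\mathbf{M}_\xi\mathbf{G}$ via $\mathbf{G}=-\widetilde{\mathbf{S}}^{\top}$, inherit symmetry and (semi)definiteness from Lemma~\ref{prop:material}, and use $\ker\widetilde{\mathbf{S}}^{\top}=0$ from Assumption~\ref{ass:nophantoms} to upgrade to strict definiteness for $\mathbf{L}_\varepsilon$. The paper's own proof merely declares this ``straightforward''; you have simply filled in the same steps explicitly.
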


\begin{proof}
	As we assume Dirichlet boundary conditions (ebc) in Assumption~\ref{ass:nophantoms} and $\mathbf{G}=-\tilde{\mathbf{S}}^{\top}$ due to 
	\eqref{eq:grad_div_fit}, the proof is straight forward. \qed
\end{proof} 

\medskip

Equations \eqref{eq:curlcurl} and \eqref{eq:laplace} are coupled by the potentials and right-hand-sides via the continuity equation
\begin{align}
	\label{eq:continuity_discrete}
	\tilde{\mathbf{S}}\protect\bbow{\mathrm{\mathbf{j}}}_\mathrm{s}
	+\mathbf{L}_{\sigma}\mathbf{L}_{\varepsilon}^{-1}\mathrm{\mathbf{q}}
	+\frac{\mathrm{d}}{\mathrm{d}{}t}\,{}\mathrm{\mathbf{q}}
	=\!\left[
	\tilde{\mathbf{S}}\mathbf{M}_{\sigma}
	-
	\mathbf{L}_{\sigma}\mathbf{L}_{\varepsilon}^{-1}
	\tilde{\mathbf{S}}\mathbf{M}_{\varepsilon}   
	\right]\!\frac{\mathrm{d}}{\mathrm{d}{}t}\,{}\protect\bow{\mathrm{\mathbf{a}}}\;,\!
\end{align}
that is obtained by a left multiplication of Amp\`ere’s law by $\tilde{\mathbf{S}}$ and inserting Gauss' law etc. The steps are the same as in the continuous case, e.g., applying the divergence operator.
Nonetheless, the discrete continuity equation \eqref{eq:continuity_discrete} is
more general than its continuous counterpart \eqref{eq:continuity} as it covers
anisotropic and non-homogeneous material distributions.

\bigskip

The ambiguity of the potentials is not yet fixed. The generalised discrete Lorenz gauge \eqref{eq:lorenz} for a conductive domain
in FIT notation is given by
\begin{align} 
\label{eq:mge_lorenz}
\mathbf{M}_{\varepsilon}\mathbf{G}\mathbf{M}_{\text{N}}\tilde{\mathbf{S}}\mathbf{M}_{\varepsilon}\protect\bow{\mathrm{\mathbf{a}}}
+\mathbf{M}_{\sigma}\mathbf{G}{\boldsymbol\Phi}
+\mathbf{M}_{\varepsilon}\mathbf{G}\frac{\mathrm{d}}{\mathrm{d}{}t}\,{}{\boldsymbol\Phi}
= 0\;
\end{align}
with a scaling matrix $\mathbf{M}_{\text{N}}$ which is mainly introduced to guarantee correct units. A consistent but rather inconvenient choice is
$$
 \mathbf{M}_{\text{N}}:=\mathbf{M}_{\varepsilon}^{-1/2}\mathbf{M}_{\nu}^{1/2}\mathbf{L}_{\varepsilon}^{-1}\mathbf{M}_{\nu}^{1/2}\mathbf{M}_{\varepsilon}^{-1/2}.
$$
This regularisation is similar to the Lagrange-multiplier formulation for the eddy-current problem, \cite{Clemens_2005aa}. 
Left-multiplication of \eqref{eq:mge_lorenz} by $\mathbf{M}_{\text{N}}^{-1}\mathbf{L}_{\varepsilon}^{-1}\tilde{\mathbf{S}}$ yields
\begin{align} 
	\label{eq:mge_lorenz2}
	\tilde{\mathbf{S}}\mathbf{M}_{\varepsilon}\protect\bow{\mathrm{\mathbf{a}}}
	+\mathbf{M}_{\text{N}}^{-1}\mathbf{L}_{\varepsilon}^{-1}\mathbf{L}_{\sigma}{\boldsymbol\Phi}
	+\mathbf{M}_{\text{N}}^{-1}\frac{\mathrm{d}}{\mathrm{d}{}t}\,{}{\boldsymbol\Phi}
	= 0.\;
\end{align}
which simplifies to Coulomb's gauge
\begin{align} 
	\label{eq:mge_coulomb}
	\tilde{\mathbf{S}}\mathbf{M}_{\varepsilon}\protect\bow{\mathrm{\mathbf{a}}}
    = 0
\end{align}
with respect to the permittivities if we set ${\boldsymbol\Phi}=0$.

\bigskip

To obtain a discrete version of the damped wave equation
\eqref{eq:wave1}-\eqref{eq:wave2}, we utilise \eqref{eq:mge_lorenz2}.
Now, using \eqref{eq:mge_lorenz} and \eqref{eq:mge_lorenz2} the system 
\eqref{eq:curlcurl}-\eqref{eq:laplace} becomes two discrete damped wave 
equations
\begin{align} \label{eq:wave_discrete1}
	\mathbf{L}_{\nu}\protect\bow{\mathrm{\mathbf{a}}}
	+ \mathbf{M}_{\sigma}\frac{\mathrm{d}}{\mathrm{d}{}t}\,{}\protect\bow{\mathrm{\mathbf{a}}} 
	+ \mathbf{M}_{\varepsilon} \frac{\mbox{d}^2{}}{\mbox{d}{{t}}^2}\protect\bow{\mathrm{\mathbf{a}}}
	&=\protect\bbow{\mathrm{\mathbf{j}}}_\mathrm{s}\\
	\label{eq:wave_discrete2}
	\mathbf{L}_{\varepsilon}{\boldsymbol\Phi}
	+\mathbf{M}_{\text{N}}^{-1}\mathbf{L}_{\varepsilon}^{-1}
	\mathbf{L}_{\sigma}\frac{\mathrm{d}}{\mathrm{d}{}t}\,{}{\boldsymbol\Phi}
	+\mathbf{M}_{\text{N}}^{-1}\frac{\mbox{d}^2{}}{\mbox{d}{{t}}^2}{\boldsymbol\Phi} &=\mathrm{\mathbf{q}}
\end{align}
with $\mathbf{L}_{\nu} := \widetilde{\mathbf{C}}\mathbf{M}_{\nu}\mathbf{C} - \mathbf{M}_{\varepsilon}\mathbf{G}\mathbf{M}_\mathrm{N}\widetilde{\mathbf{S}}\mathbf{M}_{\varepsilon}$ and given right-hand-sides $\protect\bbow{\mathrm{\mathbf{j}}}_\mathrm{s}$ 
and $\mathrm{\mathbf{q}}$ that fulfil the continuity equation 
\eqref{eq:continuity_discrete}. The resulting problem \eqref{eq:wave_discrete1}-\eqref{eq:wave_discrete2}
is a system of second-order ordinary differential equations:

\begin{theorem}
Let Assumptions~\ref{ass:dom}, \ref{ass:material} and \ref{ass:nophantoms} hold. Then, the A-$\Phi$-formulation with Lorenz gauge 
\eqref{eq:mge_lorenz2} and known charges $\mathrm{\mathbf{q}}$
leads to an ordinary differential equation (ODE) system which is given in \eqref{eq:wave_discrete1}-\eqref{eq:wave_discrete2}.
\end{theorem}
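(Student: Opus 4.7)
The plan is to recognise that the system \eqref{eq:wave_discrete1}--\eqref{eq:wave_discrete2} is a pair of linear second-order equations, so it suffices to verify that the matrices multiplying the highest time derivatives are non-singular. Once that is established, the standard reduction to first-order state-space form produces a block-diagonal mass matrix whose blocks are either the identity or one of these non-singular matrices, which by the definition of a DAE given above places the system firmly in the ODE category. Under the hypotheses, the two equations are in fact decoupled once $\mathrm{\mathbf{q}}(t)$ and $\protect\bbow{\mathrm{\mathbf{j}}}_\mathrm{s}(t)$ are prescribed (with the continuity equation \eqref{eq:continuity_discrete} acting as a compatibility condition on the data), so I can treat them one at a time.

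For the $\mathbf{A}$-equation \eqref{eq:wave_discrete1} the coefficient of $\mathrm{d}^2\protect\bow{\mathrm{\mathbf{a}}}/\mathrm{d}t^2$ is $\mathbf{M}_{\varepsilon}$, which by Lemma~\ref{prop:material} together with Assumption~\ref{ass:material} is symmetric positive definite; Assumption~\ref{ass:nophantoms} furthermore guarantees that no trivial kernel from phantom or ebc degrees of freedom survives. Therefore $\mathbf{M}_{\varepsilon}$ is invertible and \eqref{eq:wave_discrete1} can be solved explicitly for the second derivative as a continuous function of $\protect\bow{\mathrm{\mathbf{a}}}$, $\mathrm{d}\protect\bow{\mathrm{\mathbf{a}}}/\mathrm{d}t$ and $t$. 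The coefficients $\mathbf{L}_{\nu}$ and $\mathbf{M}_{\sigma}$ on the lower-order terms need not be invertible; only the leading coefficient matters for the ODE classification.

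For the $\Phi$-equation \eqref{eq:wave_discrete2} the leading coefficient is $\mathbf{M}_{\text{N}}^{-1}$, so the substantive step is to show that the scaling matrix $\mathbf{M}_{\text{N}}=\mathbf{M}_{\varepsilon}^{-1/2}\mathbf{M}_{\nu}^{1/2}\mathbf{L}_{\varepsilon}^{-1}\mathbf{M}_{\nu}^{1/2}\mathbf{M}_{\varepsilon}^{-1/2}$ is both well-defined and invertible. The square roots exist and are positive definite because $\mathbf{M}_{\varepsilon}$ and $\mathbf{M}_{\nu}$ are symmetric positive definite by Lemma~\ref{prop:material}, and the inverse $\mathbf{L}_{\varepsilon}^{-1}$ exists by the Lemma on Discrete Laplacians proved just above. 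As a product of invertible factors, $\mathbf{M}_{\text{N}}$ is invertible, and hence so is $\mathbf{M}_{\text{N}}^{-1}$; equation \eqref{eq:wave_discrete2} can then be solved for $\mathrm{d}^2{\boldsymbol\Phi}/\mathrm{d}t^2$.

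This essentially completes the argument. The main conceptual point — and the only place where anything other than bookkeeping is needed — is the invertibility of $\mathbf{M}_{\text{N}}$, because everything else is immediate once one recalls that the Lorenz gauge \eqref{eq:mge_lorenz2} has already been used in the derivation of \eqref{eq:wave_discrete1}--\eqref{eq:wave_discrete2} to remove the hidden algebraic constraint $\widetilde{\mathbf{S}}\mathbf{M}_{\varepsilon}\protect\bow{\mathrm{\mathbf{a}}}\equiv 0$ that would otherwise promote the A-formulation to a higher-index DAE. Hence no index reduction is required and the two wave equations constitute an ODE system in the sense of the definition adopted in Section~\ref{sec:DAE}.
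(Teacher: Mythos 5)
Your proposal is correct and matches the paper's (implicit) argument: the paper offers no separate proof beyond the derivation of \eqref{eq:wave_discrete1}--\eqref{eq:wave_discrete2}, and the ODE character rests exactly on the two facts you verify, namely that $\mathbf{M}_{\varepsilon}$ is symmetric positive definite (Lemma~\ref{prop:material}) and that $\mathbf{M}_{\text{N}}$ is invertible as a product of invertible factors including $\mathbf{L}_{\varepsilon}^{-1}$ from the lemma on discrete Laplacians. The only minor imprecision is in your closing aside: the constraint $\widetilde{\mathbf{S}}\mathbf{M}_{\varepsilon}\protect\bow{\mathrm{\mathbf{a}}}=0$ you cite is the Coulomb gauge \eqref{eq:mge_coulomb}, not what the Lorenz gauge \eqref{eq:mge_lorenz2} eliminates (the latter instead supplies an ODE for ${\boldsymbol\Phi}$), but this does not affect the validity of your argument.
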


\subsubsection{Full Maxwell with Lorenz Gauge}
Let us now investigate the case where the charges $\mathrm{\mathbf{q}}$ are not known. We start from Lorenz' gauge \eqref{eq:mge_lorenz}. Left-multiplication of the 
equation by $-\tilde{\mathbf{S}}$ yields
\begin{align*} 
\mathbf{L}_{\varepsilon}\mathbf{M}_{\text{N}}\tilde{\mathbf{S}}\mathbf{M}_{\varepsilon}\protect\bow{\mathrm{\mathbf{a}}}
+\mathbf{L}_{\sigma}{\boldsymbol\Phi}
+\mathbf{L}_{\varepsilon}\frac{\mathrm{d}}{\mathrm{d}{}t}\,{}{\boldsymbol\Phi}
= 0\;.
\end{align*}
Following the notation of Schoenmaker, e.g. \cite{Schoenmaker_2017aa}, we denote the derivative of the magnetic vector potential by $\bow{\mathbf{\pi}} := \mathrm{d}\protect\bow{\mathrm{\mathbf{a}}}/\mathrm{d}t$. Then, the equations \eqref{eq:curlcurl}-\eqref{eq:laplace} can be rearranged 
as the following system of DAEs 
\begin{align} 
	\label{eq:loga-1}
	\mathbf{L}_{\varepsilon}\mathbf{M}_{\text{N}}\tilde{\mathbf{S}}\mathbf{M}_{\varepsilon}\protect\bow{\mathrm{\mathbf{a}}}
	+\mathbf{L}_{\sigma}{\boldsymbol\Phi}
	+\mathbf{L}_{\varepsilon}\frac{\mathrm{d}}{\mathrm{d}{}t}\,{}{\boldsymbol\Phi}
	& = 0\\
	\label{eq:loga-2}
	\tilde{\mathbf{C}}\mathbf{M}_{\nu}\mathbf{C}\protect\bow{\mathrm{\mathbf{a}}}\!+\!\mathbf{M}_{\sigma}\!\left[\bow{\mathbf{\pi}}+
	\mathbf{G}{\boldsymbol\Phi}\right]\!\! + \!
	\mathbf{M}_{\varepsilon}\!\left[\frac{\mathrm{d}}{\mathrm{d}{}t}\,{}\bow{\mathbf{\pi}}+\mathbf{G}\frac{\mathrm{d}}{\mathrm{d}{}t}\,{}{\boldsymbol\Phi}\right]\!
	&=\!\protect\bbow{\mathrm{\mathbf{j}}}_\mathrm{s}\!\\
	\label{eq:loga-3}
	\tilde{\mathbf{S}}\mathbf{M}_{\varepsilon}\bow{\mathbf{\pi}}-\mathbf{L}_{\varepsilon}{\boldsymbol\Phi}\!
	+ \!\mathrm{\mathbf{q}}\!&= 0\\
	\label{eq:loga-4}
	\frac{\mathrm{d}}{\mathrm{d}{}t}\,{}\protect\bow{\mathrm{\mathbf{a}}} - \bow{\mathbf{\pi}} &= 0
\end{align}
with $\mathbf{x}^{\top}=(\mathrm{\mathbf{q}}^{\top}, {\boldsymbol\Phi}^{\top}, \protect\bow{\mathrm{\mathbf{a}}}^{\top}, \bow{\mathbf{\pi}}^{\top})$ such that we can write \eqref{eq:loga-1}-\eqref{eq:loga-4} in the form of \eqref{eq:dae} with the definitions
$$
\mathbf{M}=
\begin{bmatrix}
0 & \mathbf{L}_{\varepsilon} & 0 & 0\\
0 & \mathbf{M}_{\varepsilon}\mathbf{G} & 0 & \mathbf{M}_{\varepsilon}\\
0 & 0 & 0 & 0\\
0 & 0 & \mathbf{I} & 0\\
\end{bmatrix}
,
\qquad
\mathbf{K}=
\begin{bmatrix}
0 & \mathbf{L}_{\sigma} & \mathbf{L}_{\varepsilon}\mathbf{M}_{\text{N}}\tilde{\mathbf{S}}\mathbf{M}_{\varepsilon} & 0\\
0 & \mathbf{M}_{\sigma}\mathbf{G} & \tilde{\mathbf{C}}\mathbf{M}_{\nu}\mathbf{C} & \mathbf{M}_{\sigma}\\
\mathbf{I} & -\mathbf{L}_{\varepsilon} & 0 & \tilde{\mathbf{S}}\mathbf{M}_{\varepsilon}\\
0 & 0 & 0 & -\mathbf{I}\\
\end{bmatrix}
\quad
\text{and}
\quad
\mathbf{r}=
\begin{bmatrix}
0\\
\protect\bbow{\mathrm{\mathbf{j}}}_\mathrm{s}\\
0\\
0\\
\end{bmatrix}.
$$
Now, any standard time integrator, e.g. the implicit Euler method \eqref{eq:euler}, can be applied.

\bigskip

Next we determine the
differential index of the system \eqref{eq:loga-1}-\eqref{eq:loga-4}.
Equation \eqref{eq:loga-1} is an ODE for ${\boldsymbol\Phi}$
\begin{align}
	\label{eq:ode_phi_lorenz}
	\frac{\mathrm{d}}{\mathrm{d}{}t}\,{}{\boldsymbol\Phi}
	=
	-\mathbf{M}_{\text{N}}\tilde{\mathbf{S}}\mathbf{M}_{\varepsilon}\protect\bow{\mathrm{\mathbf{a}}}
	-\mathbf{L}_{\varepsilon}^{-1}\mathbf{L}_{\sigma}{\boldsymbol\Phi}.
\end{align}
Then, we deduce from \eqref{eq:loga-2} and \eqref{eq:ode_phi_lorenz} an ODE for $\bow{\mathbf{\pi}}$:
\begin{align*}
	\frac{\mathrm{d}}{\mathrm{d}{}t}\,{}\bow{\mathbf{\pi}} 
	& = -\mathbf{M}_{\varepsilon}^{-1}\left[\mathbf{L}_{\nu}\protect\bow{\mathrm{\mathbf{a}}}\!+\!\mathbf{M}_{\sigma}\!\left[\bow{\mathbf{\pi}}+
	\mathbf{G}{\boldsymbol\Phi}\right]\!\! - \!
	\mathbf{M}_{\varepsilon}\mathbf{G}\mathbf{L}_{\varepsilon}^{-1}\mathbf{L}_{\sigma}{\boldsymbol\Phi}
	- \protect\bbow{\mathrm{\mathbf{j}}}_\mathrm{s}\right]
\end{align*}
Finally, only one differentiation with respect to time of \eqref{eq:loga-3} is needed 
to obtain an ordinary differential equation for $\mathrm{\mathbf{q}}$:
\begin{align*}
	\frac{\mathrm{d}}{\mathrm{d}{}t}\,{}\mathrm{\mathbf{q}} 
	&= \tilde{\mathbf{S}}\mathbf{M}_{\sigma}\bow{\mathbf{\pi}}
	  -\mathbf{L}_{\sigma}{\boldsymbol\Phi}
      - \tilde{\mathbf{S}}\protect\bbow{\mathrm{\mathbf{j}}}_\mathrm{s}\;.
\end{align*}
Hence we conclude the following result \cite{Baumanns_2013aa}
\begin{theorem}
  Let Assumptions~\ref{ass:dom}, \ref{ass:material} and  \ref{ass:nophantoms} hold. The system 
  \eqref{eq:loga-1}-\eqref{eq:loga-4} has differential index-$1$
	and the initial vector $\mathbf{x}_0^{\top}=(\mathrm{\mathbf{q}}_0^{\top}, {\boldsymbol\Phi}_0^{\top}, \protect\bow{\mathrm{\mathbf{a}}}_0^{\top}, \bow{\mathbf{\pi}_0}^{\top})$ 
	is a consistent initial value if 
		$\mathrm{\mathbf{q}}_0 = \mathbf{L}_{\varepsilon}{\boldsymbol\Phi}_0
        -\tilde{\mathbf{S}}\mathbf{M}_{\varepsilon}\bow{\mathbf{\pi}_0}$
	is fulfilled.
\end{theorem}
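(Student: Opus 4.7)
The plan is to reduce the DAE \eqref{eq:loga-1}--\eqref{eq:loga-4} to an explicit system of ODEs in all four block unknowns by using algebraic manipulations alone together with a single differentiation of the only purely algebraic row, namely \eqref{eq:loga-3}. Showing that one differentiation suffices, and is necessary, will pin the differential index at exactly one. The initial-value characterisation will then follow by simply evaluating that algebraic constraint at $t_0$.

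First I would inspect the mass matrix $\mathbf{M}$ and note that only \eqref{eq:loga-3} lacks any time derivative; the zero row of $\mathbf{M}$ sits precisely there and is the sole source of the DAE character. Equation \eqref{eq:loga-4} is already an ODE for $\protect\bow{\mathrm{\mathbf{a}}}$. Equation \eqref{eq:loga-1} can be solved for $\tfrac{\mathrm{d}}{\mathrm{d}t}{\boldsymbol\Phi}$ because $\mathbf{L}_{\varepsilon}$ is symmetric positive definite by the discrete-Laplacian lemma (invoking Assumptions~\ref{ass:material} and~\ref{ass:nophantoms}); this is the explicit form \eqref{eq:ode_phi_lorenz}. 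Substituting the resulting expression for $\tfrac{\mathrm{d}}{\mathrm{d}t}{\boldsymbol\Phi}$ into \eqref{eq:loga-2} and inverting $\mathbf{M}_{\varepsilon}$, which is regular by Lemma~\ref{prop:material}, produces an explicit ODE for $\bow{\mathbf{\pi}}$. At this stage $\protect\bow{\mathrm{\mathbf{a}}}$, ${\boldsymbol\Phi}$ and $\bow{\mathbf{\pi}}$ satisfy ODEs without any differentiation having been performed on the system.

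Next I would differentiate the algebraic constraint \eqref{eq:loga-3} once with respect to time and substitute the already-established ODEs for $\bow{\mathbf{\pi}}$ and ${\boldsymbol\Phi}$, using $\widetilde{\mathbf{S}}\mathbf{M}_{\varepsilon}\mathbf{G}=-\mathbf{L}_{\varepsilon}$ to cancel the Laplacian contributions cleanly. The outcome is an ODE for $\mathrm{\mathbf{q}}$ whose right-hand side depends continuously on $(\mathrm{\mathbf{q}},{\boldsymbol\Phi},\protect\bow{\mathrm{\mathbf{a}}},\bow{\mathbf{\pi}},t)$. Since exactly one analytic differentiation has converted the system to explicit ODE form, the differential index is at most one; because \eqref{eq:loga-3} itself carries no derivative of $\mathrm{\mathbf{q}}$, zero differentiations cannot suffice, and the index is therefore exactly one.

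For the consistency claim, any solution of \eqref{eq:loga-1}--\eqref{eq:loga-4} must satisfy \eqref{eq:loga-3} pointwise, hence in particular at $t_0$, which gives the stated identity $\mathrm{\mathbf{q}}_0=\mathbf{L}_{\varepsilon}{\boldsymbol\Phi}_0-\widetilde{\mathbf{S}}\mathbf{M}_{\varepsilon}\bow{\mathbf{\pi}_0}$. Conversely, for any $\mathbf{x}_0$ fulfilling this identity, the reduced ODE system admits a unique local solution through $\mathbf{x}_0$, and one verifies that this solution solves the original DAE since the differentiated constraint holds together with its value at $t_0$. The main obstacle I anticipate is purely bookkeeping: tracking the cross-cancellations between $\widetilde{\mathbf{S}}\mathbf{M}_{\varepsilon}\mathbf{G}$, $\mathbf{L}_{\varepsilon}$, and $\mathbf{L}_{\sigma}$ so as to exhibit $\tfrac{\mathrm{d}}{\mathrm{d}t}\mathrm{\mathbf{q}}$ in a form that is manifestly continuous and free of hidden constraints; no deeper structural difficulty is expected once the invertibilities of $\mathbf{L}_{\varepsilon}$ and $\mathbf{M}_{\varepsilon}$ are in hand.
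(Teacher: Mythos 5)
Your proposal is correct and follows essentially the same route as the paper: solve \eqref{eq:loga-1} for $\tfrac{\mathrm{d}}{\mathrm{d}t}{\boldsymbol\Phi}$ using the invertibility of $\mathbf{L}_{\varepsilon}$, substitute into \eqref{eq:loga-2} and invert $\mathbf{M}_{\varepsilon}$ to get an ODE for $\bow{\mathbf{\pi}}$, and then differentiate the single algebraic row \eqref{eq:loga-3} once (using $\widetilde{\mathbf{S}}\widetilde{\mathbf{C}}=0$ and $\widetilde{\mathbf{S}}\mathbf{M}_{\varepsilon}\mathbf{G}=-\mathbf{L}_{\varepsilon}$) to obtain $\tfrac{\mathrm{d}}{\mathrm{d}t}\mathrm{\mathbf{q}}=\widetilde{\mathbf{S}}\mathbf{M}_{\sigma}\bow{\mathbf{\pi}}-\mathbf{L}_{\sigma}{\boldsymbol\Phi}-\widetilde{\mathbf{S}}\protect\bbow{\mathrm{\mathbf{j}}}_\mathrm{s}$, with the consistency condition being exactly the constraint \eqref{eq:loga-3} evaluated at $t_0$. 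No gaps.
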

\subsubsection{Full Maxwell with Coulomb Gauge}
Instead of augmenting the equations by a Lorenz gauge, one can choose the Coulomb gauge \eqref{eq:mge_coulomb}.
Starting by left-multiplying Coulomb's gauge by $\mathbf{M}_{\varepsilon}\mathbf{G}\mathbf{M}_{\text{N}}$, we obtain
\begin{align} 
	\label{eq:mge_coulomb_mod}
	\mathbf{M}_{\varepsilon}\mathbf{G}\mathbf{M}_{\text{N}}\tilde{\mathbf{S}}\mathbf{M}_{\varepsilon}\protect\bow{\mathrm{\mathbf{a}}}
	= 0\;.
\end{align} 
Using \eqref{eq:mge_coulomb} and \eqref{eq:mge_coulomb_mod}, the system
\eqref{eq:curlcurl}-\eqref{eq:laplace} becomes a semi-discrete damped wave 
equation accompanied by a Laplace equation, i.e.,
\begin{align*}
	\!\!\!
	\mathbf{L}_{\nu}\protect\bow{\mathrm{\mathbf{a}}}\!+\!\mathbf{M}_{\sigma}\!\left[\frac{\mathrm{d}}{\mathrm{d}{}t}\,{}\protect\bow{\mathrm{\mathbf{a}}}+
	\mathbf{G}{\boldsymbol\Phi}\right]\!\! + \!
	\mathbf{M}_{\varepsilon}\!\left[\frac{\mbox{d}^2{}}{\mbox{d}{{t}}^2}\protect\bow{\mathrm{\mathbf{a}}}+\mathbf{G}\frac{\mathrm{d}}{\mathrm{d}{}t}\,{}{\boldsymbol\Phi}\right]\!
	&=\!\protect\bbow{\mathrm{\mathbf{j}}}_\mathrm{s}\;\!\\
	\mathbf{L}_{\varepsilon}{\boldsymbol\Phi}\!
	&=\!\mathrm{\mathbf{q}}\!
\end{align*}
with right-hand-sides that fulfil the continuity equation \eqref{eq:continuity_discrete} and
thus for given $\protect\bbow{\mathrm{\mathbf{j}}}_\mathrm{s}$ the resulting semi-discrete problem is again a system of
DAEs.
The Coulomb-gauged system reads
\begin{align} 
	\label{eq:coga-1}
	\tilde{\mathbf{S}}\mathbf{M}_{\varepsilon}\protect\bow{\mathrm{\mathbf{a}}}
	& = 0\\
	\label{eq:coga-2}
	\tilde{\mathbf{C}}\mathbf{M}_{\nu}\mathbf{C}\protect\bow{\mathrm{\mathbf{a}}}\!+\!\mathbf{M}_{\sigma}\!\left[\bow{\mathbf{\pi}}+
	\mathbf{G}{\boldsymbol\Phi}\right]\!\! + \!
	\mathbf{M}_{\varepsilon}\!\left[\frac{\mathrm{d}}{\mathrm{d}{}t}\,{}\bow{\mathbf{\pi}}+\mathbf{G}\frac{\mathrm{d}}{\mathrm{d}{}t}\,{}{\boldsymbol\Phi}\right]\!
	&=\!\protect\bbow{\mathrm{\mathbf{j}}}_\mathrm{s}\!\\
	\label{eq:coga-3}
	\tilde{\mathbf{S}}\mathbf{M}_{\varepsilon}\bow{\mathbf{\pi}}-\mathbf{L}_{\varepsilon}{\boldsymbol\Phi}\!
	+ \!\mathrm{\mathbf{q}}\!&= 0\\
	\label{eq:coga-4}
	\frac{\mathrm{d}}{\mathrm{d}{}t}\,{}\protect\bow{\mathrm{\mathbf{a}}} -\bow{\mathbf{\pi}} &= 0
\end{align}
with $\mathbf{x}^{\top}=(\mathrm{\mathbf{q}}^{\top}, {\boldsymbol\Phi}^{\top}, \protect\bow{\mathrm{\mathbf{a}}}^{\top}, \bow{\mathbf{\pi}}^{\top})$. Similarly as before we can identify a first order DAE 
system of form \eqref{eq:dae} and apply for example the implicit Euler method.

\bigskip

Next, we determine the
differential index of the system \eqref{eq:coga-1}-\eqref{eq:coga-4}.
Differentiating \eqref{eq:coga-1} twice with respect to time and inserting \eqref{eq:coga-4} leads to
\begin{align}
	\label{eq:coga-5}
	\tilde{\mathbf{S}}\mathbf{M}_{\varepsilon}\frac{\mathrm{d}}{\mathrm{d}{}t}\,{}\bow{\mathbf{\pi}} = 0\;.
\end{align}
This indicates already that the differential index is at least $\vartheta\geq2$.
Left-multiplying \eqref{eq:coga-2} by $\tilde{\mathbf{S}}$
and applying \eqref{eq:coga-5} yields:
\begin{align*}
	\frac{\mathrm{d}}{\mathrm{d}{}t}\,{}{\boldsymbol\Phi} = -\mathbf{L}_{\varepsilon}^{-1}\!\left[
	\mathbf{L}_{\sigma}{\boldsymbol\Phi}
	- \tilde{\mathbf{S}}\mathbf{M}_{\sigma}\bow{\mathbf{\pi}}
	+ \tilde{\mathbf{S}}\protect\bbow{\mathrm{\mathbf{j}}}_\mathrm{s}
	\right]\!\;
\end{align*}
Furthermore, from \eqref{eq:coga-2}, we obtain:
\begin{align*}
\frac{\mathrm{d}}{\mathrm{d}{}t}\,{}\bow{\mathbf{\pi}} 
& = 
-\mathbf{M}_{\varepsilon}^{-1}\!\left[ (\mathbf{M}_{\sigma}\mathbf{G}-\mathbf{M}_{\varepsilon}\mathbf{G}\mathbf{L}_{\varepsilon}^{-1}\mathbf{L}_{\sigma}){\boldsymbol\Phi}
+\tilde{\mathbf{C}}\mathbf{M}_{\nu}\mathbf{C}\protect\bow{\mathrm{\mathbf{a}}}\right. \\
& \hphantom{=}\left. +(\mathbf{M}_{\sigma}+ \mathbf{M}_{\varepsilon}\mathbf{G}\mathbf{L}_{\varepsilon}^{-1}\tilde{\mathbf{S}}\mathbf{M}_{\sigma})\bow{\mathbf{\pi}}
-(\mathbf{I} +\mathbf{M}_{\varepsilon}\mathbf{G}\mathbf{L}_{\varepsilon}^{-1}\tilde{\mathbf{S}} )\protect\bbow{\mathrm{\mathbf{j}}}_\mathrm{s}
\right]
\end{align*}
Finally, one differentiation with respect to time of \eqref{eq:coga-3} results in
\begin{align*}	
	\frac{\mathrm{d}}{\mathrm{d}{}t}\,{}\mathrm{\mathbf{q}}
	&= \tilde{\mathbf{S}}\!\mathbf{M}_{\sigma}\bow{\mathbf{\pi}}
	  - \mathbf{L}_{\sigma}{\boldsymbol\Phi}
      - \tilde{\mathbf{S}}\protect\bbow{\mathrm{\mathbf{j}}}_\mathrm{s}\;
\end{align*}
and thus the overall problem has a differential index-$2$  \cite{Baumanns_2013aa}.
\begin{theorem}
    Let Assumptions~\ref{ass:dom}, \ref{ass:material} and \ref{ass:nophantoms} hold. The system \eqref{eq:coga-1}-\eqref{eq:coga-4} has differential 
    index-$2$	and the initial vector 
    $\mathbf{x}_0^{\top}=(\mathrm{\mathbf{q}}_0^{\top}, 
  {\boldsymbol\Phi}_0^{\top}, \protect\bow{\mathrm{\mathbf{a}}}_0^{\top}, \bow{\mathbf{\pi}_0}^{\top})$ is a consistent initial value if
		$\tilde{\mathbf{S}}\mathbf{M}_{\varepsilon}\protect\bow{\mathrm{\mathbf{a}}}_0 = 0$, 
		$\tilde{\mathbf{S}}\mathbf{M}_{\varepsilon}\bow{\mathbf{\pi}_0} = 0$
        and
		$\mathrm{\mathbf{q}}_0=\mathbf{L}_{\varepsilon}{\boldsymbol\Phi}_0-\tilde{\mathbf{S}}\mathbf{M}_{\varepsilon}\bow{\mathbf{\pi}_0}$
	are fulfilled.
\end{theorem}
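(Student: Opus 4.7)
The plan is to establish the claimed index by a two-sided argument. For the upper bound, I will exhibit explicit ODE expressions for every component of $\mathbf{x} = (\mathrm{\mathbf{q}}, {\boldsymbol\Phi}, \protect\bow{\mathrm{\mathbf{a}}}, \bow{\mathbf{\pi}})$ after at most two analytic time differentiations of the system \eqref{eq:coga-1}--\eqref{eq:coga-4}, and for the lower bound I will observe that the hidden constraints obtained after one differentiation are still purely algebraic, so an ODE cannot be reached at the first-derivative level.

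\textbf{Index is at least 2.} Equation \eqref{eq:coga-1} is an algebraic constraint that does not involve time derivatives, showing $\vartheta\geq 1$. Differentiating it once with respect to $t$ and inserting \eqref{eq:coga-4} yields $\tilde{\mathbf{S}}\mathbf{M}_{\varepsilon}\bow{\mathbf{\pi}}=0$, which is again purely algebraic; no ODE for the algebraic variable ${\boldsymbol\Phi}$ has been produced. Hence $\vartheta\geq 2$.

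\textbf{Index is at most 2.} I differentiate \eqref{eq:coga-1} a second time to obtain $\tilde{\mathbf{S}}\mathbf{M}_{\varepsilon}\tfrac{\mathrm{d}}{\mathrm{d}t}\bow{\mathbf{\pi}}=0$. Left-multiplying \eqref{eq:coga-2} by $\tilde{\mathbf{S}}$ and using $\tilde{\mathbf{S}}\tilde{\mathbf{C}}=0$ from Lemma~\ref{prop:topmatrices} together with the definitions of $\mathbf{L}_{\sigma}$ and $\mathbf{L}_{\varepsilon}$ in \eqref{eq:discrete_laplace} gives
\begin{equation*}
-\mathbf{L}_{\varepsilon}\tfrac{\mathrm{d}}{\mathrm{d}t}{\boldsymbol\Phi}
= \tilde{\mathbf{S}}\protect\bbow{\mathrm{\mathbf{j}}}_\mathrm{s} - \tilde{\mathbf{S}}\mathbf{M}_{\sigma}\bow{\mathbf{\pi}} + \mathbf{L}_{\sigma}{\boldsymbol\Phi}.
\end{equation*}
Since the Lemma on discrete Laplacians gives $\mathbf{L}_{\varepsilon}$ symmetric positive definite (hence invertible), this is an ODE for ${\boldsymbol\Phi}$. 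Inserting $\tfrac{\mathrm{d}}{\mathrm{d}t}{\boldsymbol\Phi}$ back into \eqref{eq:coga-2} and using that $\mathbf{M}_{\varepsilon}$ is invertible by Lemma~\ref{prop:material} and Assumption~\ref{ass:material} yields an ODE for $\bow{\mathbf{\pi}}$. Equation \eqref{eq:coga-4} is already an ODE for $\protect\bow{\mathrm{\mathbf{a}}}$, and a single differentiation of \eqref{eq:coga-3} combined with the just-derived ODEs produces an ODE for $\mathrm{\mathbf{q}}$. The maximum number of differentiations used was two (on \eqref{eq:coga-1}), so $\vartheta\leq 2$ and together with the lower bound $\vartheta=2$.

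\textbf{Consistency of initial values.} The manifold of consistent states is defined by the original constraint and the hidden constraints obtained en route to the ODE. These are precisely \eqref{eq:coga-1} at $t_0$, namely $\tilde{\mathbf{S}}\mathbf{M}_{\varepsilon}\protect\bow{\mathrm{\mathbf{a}}}_0=0$, its first derivative $\tilde{\mathbf{S}}\mathbf{M}_{\varepsilon}\bow{\mathbf{\pi}}_0=0$, and the algebraic equation \eqref{eq:coga-3} evaluated at $t_0$, i.e.\ $\mathrm{\mathbf{q}}_0=\mathbf{L}_{\varepsilon}{\boldsymbol\Phi}_0-\tilde{\mathbf{S}}\mathbf{M}_{\varepsilon}\bow{\mathbf{\pi}}_0$; the second derivative of \eqref{eq:coga-1} was consumed in the index reduction and introduces no further restriction at $t_0$ beyond these. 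The only delicate point I anticipate is bookkeeping the two discrete identities $\tilde{\mathbf{S}}\tilde{\mathbf{C}}=0$ and $\mathbf{L}_{\varepsilon}=-\tilde{\mathbf{S}}\mathbf{M}_{\varepsilon}\mathbf{G}$ consistently so that the elimination of $\tfrac{\mathrm{d}}{\mathrm{d}t}\bow{\mathbf{\pi}}$ in the multiplied Ampère equation is clean; all remaining manipulations are routine linear algebra, relying on the invertibility of $\mathbf{M}_{\varepsilon}$ and $\mathbf{L}_{\varepsilon}$ guaranteed by Assumptions~\ref{ass:material} and \ref{ass:nophantoms}.
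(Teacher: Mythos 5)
Your proof is correct and follows essentially the same route as the paper: differentiate the Coulomb constraint \eqref{eq:coga-1} twice to obtain $\tilde{\mathbf{S}}\mathbf{M}_{\varepsilon}\tfrac{\mathrm{d}}{\mathrm{d}t}\bow{\mathbf{\pi}}=0$, left-multiply \eqref{eq:coga-2} by $\tilde{\mathbf{S}}$ to extract an ODE for ${\boldsymbol\Phi}$ via the invertibility of $\mathbf{L}_{\varepsilon}$, then recover ODEs for $\bow{\mathbf{\pi}}$, $\protect\bow{\mathrm{\mathbf{a}}}$ and $\mathrm{\mathbf{q}}$, with the consistency conditions read off from the explicit and hidden constraints. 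Your slightly more explicit lower-bound remark (that one differentiation still leaves a purely algebraic constraint) is a minor refinement of the paper's argument, not a different approach.
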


Lorenz' and Coulomb's gauge lead to systems that describe the same
phenomena and have eventually, i.e. in the mesh size limit, the same electromagnetic fields (strengths or fluxes) as solutions. 
On the other hand, the
structural properties are different, i.e., the Lorenz gauge yields an index-1
problem whereas the Coulomb gauge gives index-2. Hence, the latter formulation will be
much more affected by perturbations and the computation of consistent initial values is more cumbersome. This has been observed in simulations \cite{Baumanns_2013aa,Baumanns_2012ab}.

\begin{figure}[t]
	\centering
	\includegraphics[width=.5\textwidth]{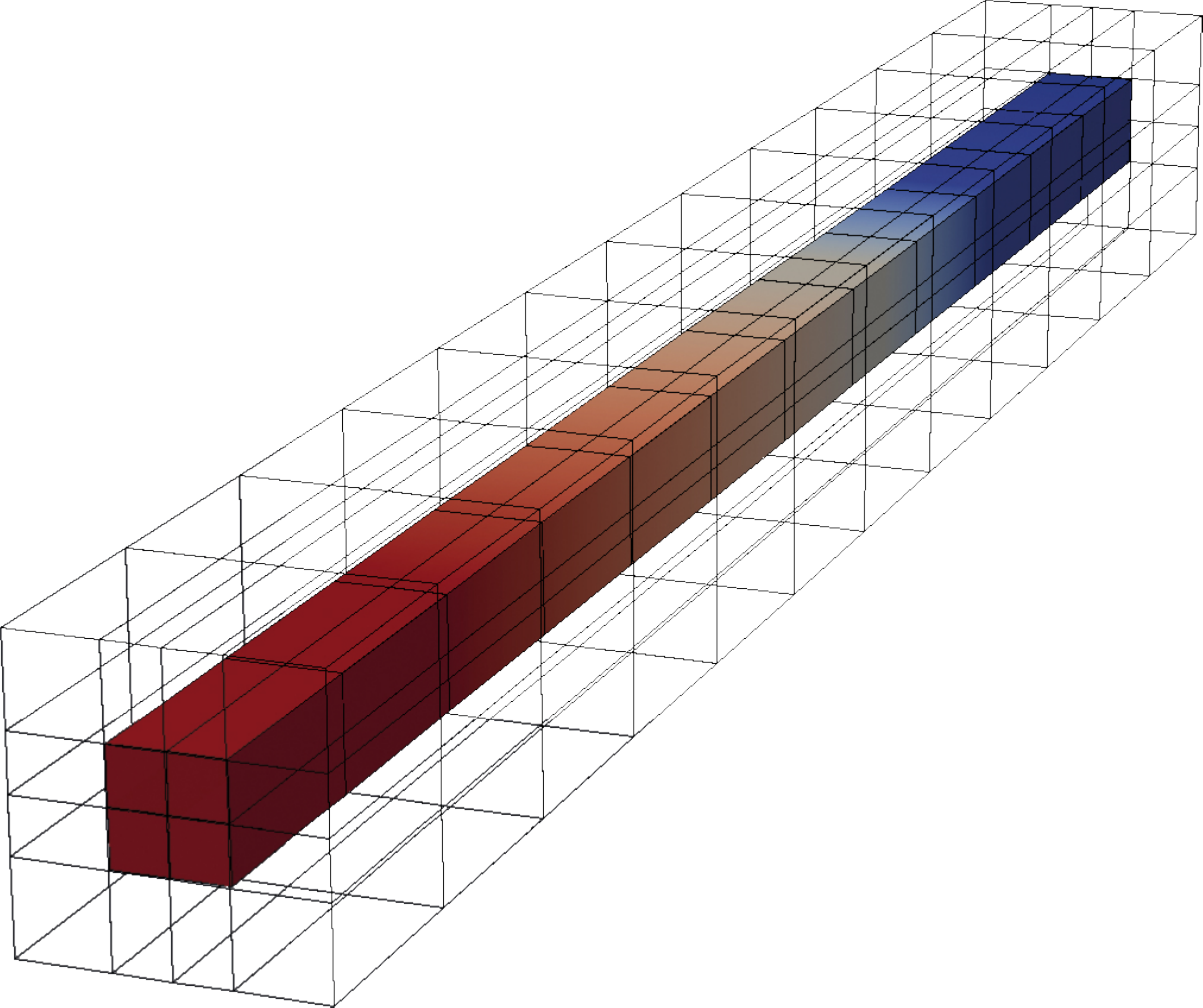}
	\caption{Copper bar in air, excited by sinusoidal source (Benchmark \ref{bench:sascha})}
	\label{fig:sascha}
\end{figure}

\begin{figure}[t]
	\centering
	\subfigure[Squared cross-section of the copper bar at the z=0 plane.]{
		\includegraphics{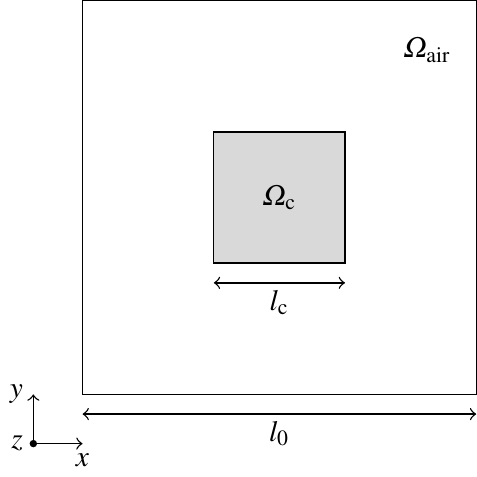}
			}
	\hspace{6em}
	\subfigure[Cross-section of the copper bar at the x=0 plane.]{
		\includegraphics{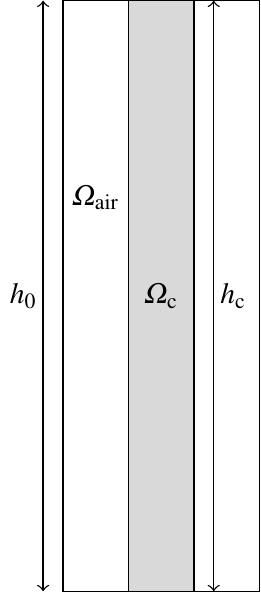}
			}
	\caption{Copper bar with square cross-section surrounded by air. The distances are \mbox{$l_\mathrm{c} = (1+\sqrt{1.5})\cdot$\SI{e-3}{m}}, $l_0 = 
	(3+\sqrt{1.5})\cdot$\SI{e-3}{m}, 
	and $h_0 = h_\mathrm{c} = $\SI{3}{m}.}
	\label{fig:sascha2}
\end{figure}

\begin{benchmark}\label{bench:sascha}
The benchmark example Fig.~\ref{fig:sascha} was proposed in \cite{Baumanns_2013aa} to numerically analyse the DAE index of the two gauged $A-\phi$ 
formulations. The model is a copper bar with a cross-sectional area of $0.25$ mm$^2$ surrounded by air and discretised by FIT. A detailed 
characterisation of the dimensions can be seen in Figure \ref{fig:sascha2}. 

On the copper bar $\Omega_{\mathrm{c}}$, a conductivity of $\sigma_\mathrm{c} = $\SI{5.7e7}{S/m} is set and on the air region $\sigma_\mathrm{air} = 
0\,  \mathrm{S/m}$. Vacuum permeability $\mu = 4\pi\cdot$\SI{e-7}{H/m} and relative permittivity $\varepsilon_\text{r}=1$ is assumed in the entire 
domain. 

One contact is excited by a sinusoidal voltage $v=\sin(2\pi t)\SI{}{V}$, the other contact is grounded~(ebc) and the remaining boundary is 
set to mbc. 

The structure is discretised using FIT with $325$ mesh cells and $845$ degrees of freedom. The implicit Euler method is applied with a time step of 
$\Delta t=$\SI{1e-4}{s} and zero initial condition, see Fig.~\ref{fig:sascha_iv}.
\end{benchmark}

\begin{figure}
  \centering
  \subfigure[Current through the bar]{
    \includegraphics{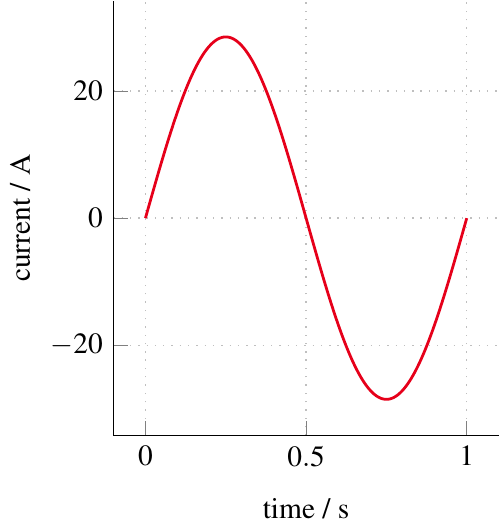}
      }
  \hspace{0.05\textwidth}
  \subfigure[voltage drop at the ports]{
    \includegraphics{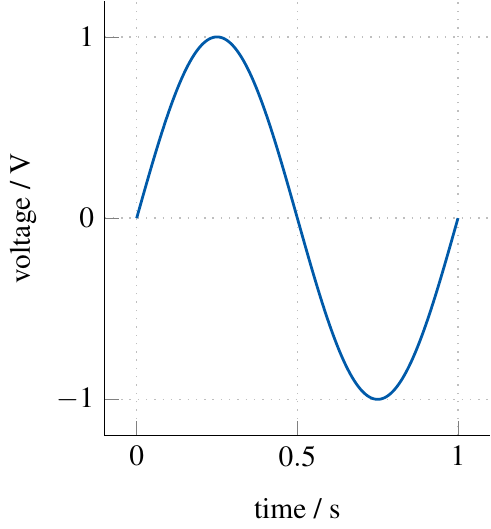}
      }
  \caption{Time domain simulation results for the Benchmark~\ref{bench:sascha} with Lorenz gauge $\Delta t=\SI{1e-4}{s}$\label{fig:sascha_iv}.}
\end{figure} 
\section{Quasistatic Maxwell's Equations}\label{sec:qs}
In the case of slowly time-varying fields, certain time-derivatives of Maxwell's equations can be disregarded with respect to other phenomena, see Definition~\ref{def:qs}. This is convenient to simplify the numerical treatment. However, the resulting (quasi-) static approximations have different structural properties and their differential algebraic index is studied next.

\subsection{Electroquasistatic Maxwell's Equations}
We start with the index study of the electroquasistatic approximation, that is given in Definition 
\hyperref[eqs]{\ref*{def:qs} \ref*{eqs}}. Maxwell's equations can be rewritten as
\begin{align*}
	  \nabla\times\ensuremath{\vec{E}} &= 0\;,&
	\nabla\times\ensuremath{\vec{H}} &= \frac{\partial\ensuremath{\vec{D}}}{\partial t} + \ensuremath{\vec{J}}\;, &
	\nabla\cdot\ensuremath{\vec{D}}  &= \ensuremath{\rho}\;, &
	\nabla\cdot\ensuremath{\vec{B}}  &= 0\;.
\end{align*}
As the curl of electric field $\ensuremath{\vec{E}}$ vanishes, it can be described as the gradient of the electric scalar potential $\phi$ 
\begin{equation}\label{eq:ESP}
	\ensuremath{\vec{E}} = - \nabla \phi\;,
\end{equation}
i.e., the magnetic vector potential's contribution to $\ensuremath{\vec{E}}$ in \eqref{eq:a-phi} is negligible.

\subsubsection{Electric Scalar Potential $\phi$-Formulation}
Using equation \eqref{eq:ESP}, Maxwell's equations for electroquasistatic fields and the material laws, the following potential equation can be 
obtained to compute $\phi$
\begin{equation*}
	\nabla \cdot \boldsymbol{\sigma}\nabla \phi + \frac{\partial}{\partial{}t}\,{}\nabla \cdot \boldsymbol{\varepsilon} \nabla \phi = 0\;. 
\end{equation*}
Eventually, spatial discretisation leads to a system of DAEs 
\begin{equation}\label{eq:eqsphi}
	\widetilde{\mathbf{S}}\mathbf{M}_{\sigma}\widetilde{\mathbf{S}}^{\top}{\boldsymbol\Phi} + \widetilde{\mathbf{S}}\mathbf{M}_{\varepsilon}\widetilde{\mathbf{S}}^{\top}\frac{\mathrm{d}}{\mathrm{d}t}{\boldsymbol\Phi} = 0\;,
\end{equation}
where ${\boldsymbol\Phi}$ contains the degrees of freedom of our problem, i.e. the electric scalar potential on the nodes of the primal grid. 
However, if boundary conditions are properly set, then one can show
\begin{theorem}\label{theorem:eqs}
	The system \eqref{eq:eqsphi} under Assumptions~\ref{ass:dom}, \ref{ass:material} and \ref{ass:nophantoms} is an ODE.
\end{theorem}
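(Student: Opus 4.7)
The plan is to recognise that the coefficient of the time derivative in \eqref{eq:eqsphi} is exactly the discrete permittivity Laplacian $\mathbf{L}_{\varepsilon}$ already analysed earlier, and then to invert it to pass from the implicit form \eqref{eq:dae} with singular $\mathbf{M}$ to an explicit ODE.

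First I would use the identity \eqref{eq:grad_div_fit} from Lemma~\ref{prop:topmatrices}, namely $\mathbf{G}=-\widetilde{\mathbf{S}}^{\top}$, to rewrite the two grid operators appearing in \eqref{eq:eqsphi} as
\begin{align*}
\widetilde{\mathbf{S}}\mathbf{M}_{\varepsilon}\widetilde{\mathbf{S}}^{\top} \;=\; -\widetilde{\mathbf{S}}\mathbf{M}_{\varepsilon}\mathbf{G} \;=\; \mathbf{L}_{\varepsilon}, \qquad
\widetilde{\mathbf{S}}\mathbf{M}_{\sigma}\widetilde{\mathbf{S}}^{\top} \;=\; \mathbf{L}_{\sigma},
\end{align*}
so that \eqref{eq:eqsphi} reads $\mathbf{L}_{\varepsilon}\tfrac{\mathrm{d}}{\mathrm{d}t}{\boldsymbol\Phi}+\mathbf{L}_{\sigma}{\boldsymbol\Phi}=0$. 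This matches the abstract format \eqref{eq:dae} with $\mathbf{M}=\mathbf{L}_{\varepsilon}$ and $\mathbf{K}=\mathbf{L}_{\sigma}$.

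Next I would invoke the Discrete Laplacians lemma, which under Assumptions~\ref{ass:material} and \ref{ass:nophantoms} asserts that $\mathbf{L}_{\varepsilon}$ is symmetric positive definite, hence invertible. Left-multiplying by $\mathbf{L}_{\varepsilon}^{-1}$ yields the explicit system
\begin{align*}
\frac{\mathrm{d}}{\mathrm{d}t}{\boldsymbol\Phi} \;=\; -\,\mathbf{L}_{\varepsilon}^{-1}\mathbf{L}_{\sigma}\,{\boldsymbol\Phi},
\end{align*}
which is an ordinary differential equation in the standard sense, establishing the claim.

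The only delicate point, and what I would want to emphasise, is \emph{why} $\mathbf{L}_{\varepsilon}$ is nonsingular: without Assumption~\ref{ass:nophantoms} the discrete gradient $\mathbf{G}=-\widetilde{\mathbf{S}}^{\top}$ would have the constant vectors in its kernel (together with contributions from phantom degrees of freedom), so $\widetilde{\mathbf{S}}\mathbf{M}_{\varepsilon}\widetilde{\mathbf{S}}^{\top}$ would only be positive semidefinite and the system would be a genuine DAE. The inclusion of the homogeneous electric boundary condition on $\Gamma_{\text{ebc}}$ via the projection in Assumption~\ref{ass:nophantoms} ensures $\ker\widetilde{\mathbf{S}}^{\top}=\{0\}$; combined with $\mathbf{M}_{\varepsilon}\succ0$ from Lemma~\ref{prop:material}, this gives strict positive definiteness of $\mathbf{L}_{\varepsilon}$. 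So the main ``obstacle'' is not computational but conceptual, namely pointing out that the boundary-condition reduction is exactly what turns what looks like a singular mass matrix into an invertible one, thereby upgrading \eqref{eq:eqsphi} from a DAE to an ODE.
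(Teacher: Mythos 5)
Your proof is correct and follows essentially the same route as the paper, which simply notes that the theorem ``follows immediately from Lemma~\ref{prop:material}'': the positive definiteness of $\mathbf{M}_{\varepsilon}$ together with $\ker\widetilde{\mathbf{S}}^{\top}=0$ from Assumption~\ref{ass:nophantoms} makes the mass matrix $\widetilde{\mathbf{S}}\mathbf{M}_{\varepsilon}\widetilde{\mathbf{S}}^{\top}=\mathbf{L}_{\varepsilon}$ invertible. You merely spell out the details (via the Discrete Laplacians lemma and the identity $\mathbf{G}=-\widetilde{\mathbf{S}}^{\top}$) that the paper leaves implicit, and your emphasis on why the boundary-condition reduction is the decisive ingredient is accurate.
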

Theorem \ref{theorem:eqs} follows immediately from Lemma \ref{prop:material}.

\begin{figure}
	\centering
	\includegraphics{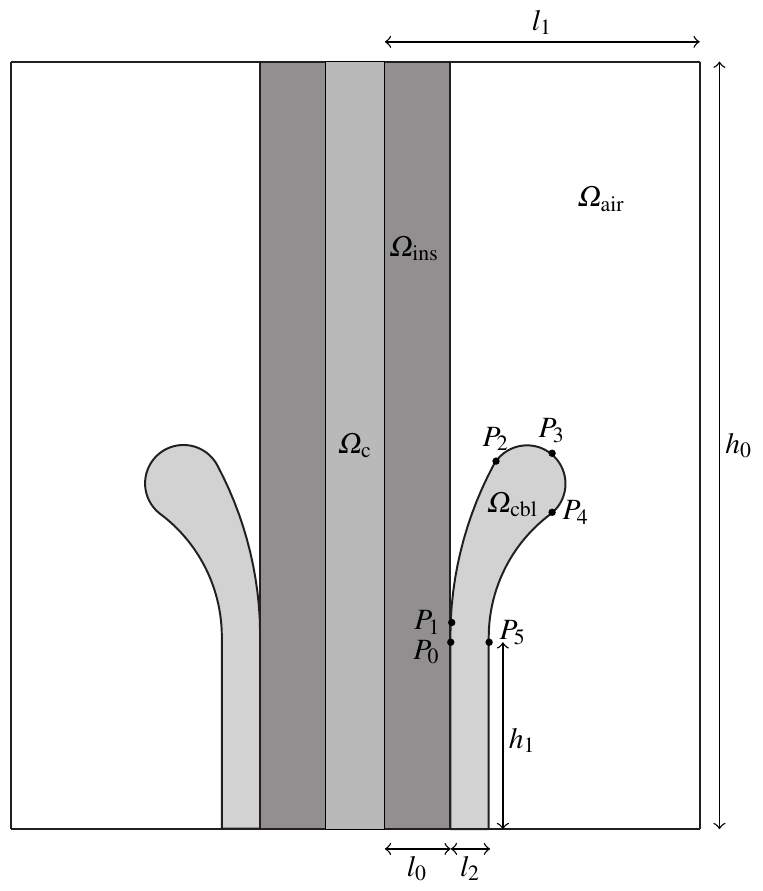}
		\caption{Sketch of electroquasistatic benchmark domain. The distances are $l_0 = $\SI{1.2e-2}{m}, $l_1 = $\SI{2.5e-2}{m}, $l_2 = $\SI{2e-3}{m}, 
	$h_0 = $\SI{4e-2}{m} and $h_1 = 
	$\SI{1e-2}{m}. To describe the arc segments, points $P_0 =(12, \ 10)$, $P_1 = (12.03, \ 11)$, $P_2 =(14.24, \ 18.94)$, $P_3 =(17.3, \ 19.52)$, 
	$P_4 = (17.2, \ 16,4)$ and $P_5 =(14, \ 10)$ are 
	defined. The first arc segment from point $P_2$ to $P_1$ has an angle of \ang{25.06} and is described by straight segments with a grid spacing of 
	\ang{3.02}. Both arc segments from $P_3$ to $P_2$ as well as from $P_4$ to $P_3$ have \ang{102.53} and \ang{28.65} spacing. The last one from 
	$P_4$ to $P_5$ has \ang{53.13} and \ang{7.16} spacing.}\label{fig:eqs_sketch}
\end{figure}
\begin{benchmark}\label{bench:eqs}
	In a DC high-voltage cable, the insulation between the inner high-voltage electrode and the outer shielding layer carries a large electric field 
	strength. At the end of the cable ($\Omega_{\mathrm{cbl}}$), the voltage has to drop along the surface of the insulation layer 
	($\Omega_{\mathrm{ins}}$) with a 
	substantially smaller electric field 
	strength. This necessitates the design of a so-called cable termination with field-shaping capability. A sketch of the domain and its distances 
	can be seen in 
	Fig.~\ref{fig:eqs_sketch}.
	
	The computational domain is $\Omega = \Omega_{\mathrm{air}}\cup\Omega_{\mathrm{ins}}$ and the rest of the domain is modelled via boundary  
	conditions and thus not considered by the discretisation.
	In the air region $\Omega_{\mathrm{air}}$ the conductivity $\sigma$ is set to zero  and the permittivity of vacuum $\varepsilon_0 = 
	$\SI{8.85e-12}{F/m} is assumed. The insulating domain $\Omega_{\mathrm{ins}}$ has conductivity \SI{1e0}{S/m} and permittivity  
	$6\varepsilon_0$. 
	
	Due to symmetry reasons, only an axisymmetric cross-section (i.e. the right half of the domain sketched 
	in Fig.~\ref{fig:eqs_sketch}) is 
	simulated.
	The cable endings $\Omega_{\mathrm{cbl}}$ are modelled by zero Dirichlet boundary conditions (ebc)  on the  
	boundary $\Gamma_{\mathrm{cbl}} = \partial \Omega_{\mathrm{cbl}}\cap \left(\overline{\Omega}_\mathrm{ins}\cup 
	\overline{\Omega}_\mathrm{air}\right)$. Similarly, the domain $\Omega_{\mathrm{c}}$ is modelled by non-homogeneous Dirichlet boundary conditions 
	that set the potential $\phi$ to a time dependent value $f(t)$ on $\Gamma_\mathrm{c} = \partial \Omega_{\mathrm{ins}}\cap 
	\overline{\Omega}_\mathrm{c}$, see Fig.~\ref{fig:eqs_sol}a. At the rest of the boundary zero Neumann boundary conditions (mbc) are set. 
	
	Using the Finite Element Method yields 2892 
	number of nodes and 
	2078 degrees of freedom. Time integration is carried out with the implicit Euler method from time $t_0 =$\SI{0}{s} to $t_\mathrm{end} = 
	$\SI{2e-3}{s} with step size $\Delta t =$\SI{1e-5}{s}. The steady state solution is set as initial condition.
	Fig.~\ref{fig:eqs_sol} shows the excitation function $f(t)$ and the electric energy $E_\mathrm{elec} = 
	\frac{1}{2}\int_{\Omega}\ensuremath{\vec{E}}\cdot\ensuremath{\vec{D}}\;\mathrm{d}\Omega\approx\frac{1}{2}\protect\bow{\mathrm{\mathbf{e}}}^{\top}\mathbf{M}_{\varepsilon}\protect\bow{\mathrm{\mathbf{e}}}$ over time.
\end{benchmark}

\begin{figure}
	\centering
	\subfigure[Excitation function f(t).]{
		\includegraphics{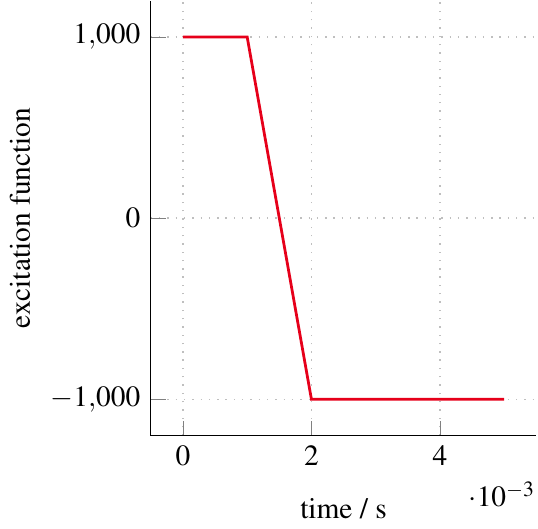}
		\label{fig:Wmagn}}
	\subfigure[Electric energy.]{
		\includegraphics{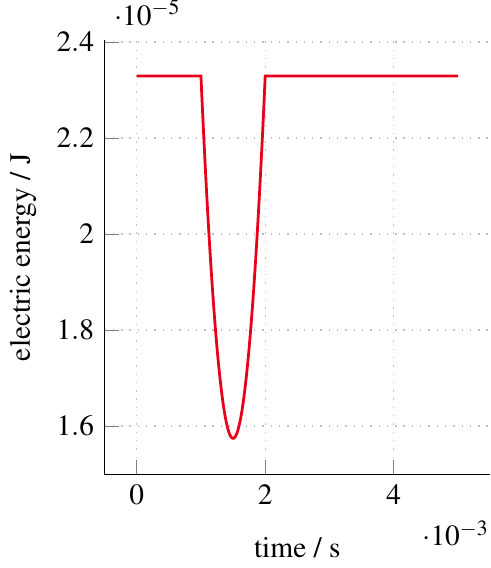}
		\label{fig:Uelec}}
	
	\caption{Electric energy and excitation function of benchmark~\ref{bench:eqs}.\label{fig:eqs_sol}}
	
\end{figure}

For the electroquasistatic problem other formulations are not common as \eqref{eq:eqsphi} has convenient properties, e.g. a low-number of degrees of freedom, since no vectorial fields are needed and ordinary differential character. Rarely, a mixed charge/potential formulation \cite{Ouedraogo_2017aa}
\begin{align*}
	\widetilde{\mathbf{S}}\mathbf{M}_{\sigma}\widetilde{\mathbf{S}}^{\top}{\boldsymbol\Phi} + \frac{\mathrm{d}}{\mathrm{d}t}\mathrm{\mathbf{q}} &= 0\;,\\
	\mathrm{\mathbf{q}}-\widetilde{\mathbf{S}}\mathbf{M}_{\varepsilon}\widetilde{\mathbf{S}}^{\top}{\boldsymbol\Phi}&=0\;. 
\end{align*}
is employed which is a simple and easy to solve DAE index-1 system.

\subsection{Magnetoquasistatic Maxwell's Equations}
Now the magnetoquasistatic case is studied. Following Definition \hyperref[mqs]{\ref*{def:qs} \ref*{mqs}} Maxwell's equations take the form  
\begin{align*}
\nabla\times\ensuremath{\vec{E}} &=  -\frac{\partial\ensuremath{\vec{B}}}{\partial t}\;,&
\nabla\times\ensuremath{\vec{H}} &= \ensuremath{\vec{J}}\;, &
\nabla\cdot\ensuremath{\vec{D}}  &= \ensuremath{\rho}\;, &
\nabla\cdot\ensuremath{\vec{B}}  &= 0\;.
\end{align*}
Due to the non-vanishing curl of the electric field strength, a scalar potential formulation is no longer possible. Several competing vector potential formulations are common, see e.g. \cite{Biro_1995aa}.

\subsubsection{Magnetic Vector Potential A-Formulations}
Using the definition of the magnetic vector and the electric scalar potentials $\ensuremath{\vec{A}}$  and $\phi$ (see equation \ref{eq:a-phi}) and inserting the material laws and magnetoquasistatic equations into each other, one finds the curl-curl equation
\begin{equation*}
	\boldsymbol{\sigma} \left(\frac{\partial\ensuremath{\vec{A}}}{\partial t}+\nabla\phi\right) + \nabla\times\left(\boldsymbol{\nu}\nabla\times\ensuremath{\vec{A}}\right) = 
	\ensuremath{\vec{J}}_\mathrm{s}\;.
\end{equation*}
In a three dimensional domain, a gauge condition is necessary to ensure uniqueness of solution, due to the kernel of the curl-operator ($\mathbf{C} 
\widetilde{\mathbf{S}}^{\top} = 0$). The so-called A* formulation exploits this freedom of choice and assumes that the gradient of the electric scalar potential is 
zero 
($\nabla \phi = 
0$). Applying it yields the spatially discretised magnetoquasistatic curl-curl equation
\begin{equation}\label{eq:c-c}
	\mathbf{M}_{\sigma} \frac{\mathrm{d}}{\mathrm{d}t}\protect\bow{\mathrm{\mathbf{a}}} + \mathbf{C}^{\top}\mathbf{M}_{\nu}\mathbf{C}\protect\bow{\mathrm{\mathbf{a}}} = \protect\bbow{\mathrm{\mathbf{j}}}_\mathrm{s} 
\end{equation}
with $\protect\bbow{\mathrm{\mathbf{j}}}_\mathrm{s}$ known and $\protect\bow{\mathrm{\mathbf{a}}}$ containing the degrees of freedom (the magnetic vector potential integrated on the edges of the primal 
grid). Alternatively, mixed formulations incorporating a gauging conditions have been proposed, e.g. \cite{Clemens_2005aa}
\begin{equation*}
	\begin{pmatrix}
		\mathbf{M}_{\sigma} & 0\\
		0 & 0
	\end{pmatrix}
	\frac{\mathrm{d}}{\mathrm{d}t}
	\begin{pmatrix}
		\protect\bow{\mathrm{\mathbf{a}}}\\
		{\boldsymbol\Phi}
	\end{pmatrix}
	+
	\begin{pmatrix}
		\mathbf{C}^{\top}\mathbf{M}_{\nu}\mathbf{C} & \mathbf{M}_1\widetilde{\mathbf{S}}^{\top}\\
		-\widetilde{\mathbf{S}}\mathbf{M}_1 & - \ensuremath{ \mathbf{M} }_\text{N}^{-1}
	\end{pmatrix}
	\begin{pmatrix}
		\protect\bow{\mathrm{\mathbf{a}}}\\
		{\boldsymbol\Phi}
	\end{pmatrix}
	=
	\begin{pmatrix}
		\protect\bbow{\mathrm{\mathbf{j}}}_\mathrm{s}\\
		0
	\end{pmatrix}
\end{equation*}
where $\mathbf{M}_1$ is a regularised version of $\mathbf{M}_{\sigma}$ and $\ensuremath{ \mathbf{M} }_\text{N}$ is a regular matrix to ensure the correct physical units as in \eqref{eq:mge_lorenz}. Using the Schur complement, one derives a grad-div regularisation, e.g. \cite{Clemens_2002aa}, where
\begin{equation*}
		\ensuremath{ \mathbf{Z} }_\sigma = \ensuremath{ \mathbf{M} }_1\widetilde{\mathbf{S}}^{\top}\ensuremath{ \mathbf{M} }_\text{N}\widetilde{\mathbf{S}}\ensuremath{ \mathbf{M} }_1.
\end{equation*}
can be used to finally arrive at
\begin{ass}\label{ass:regmatrix}
 	Let us assume that system \eqref{eq:c-c} is rewritten as
	\begin{equation}\label{eq:mqs_reg}
	\mathbf{M}_{\sigma}\frac{\mathrm{d}}{\mathrm{d}t}\protect\bow{\mathrm{\mathbf{a}}} + \ensuremath{ \mathbf{K} }_\nu\protect\bow{\mathrm{\mathbf{a}}} = \protect\bbow{\mathrm{\mathbf{j}}}_\mathrm{s},
	\end{equation}
	where $\ensuremath{ \mathbf{K} }_\nu = \mathbf{C}^{\top}\mathbf{M}_{\nu}\mathbf{C} + \ensuremath{ \mathbf{Z} }_\sigma$, provided  $\ensuremath{ \mathbf{Z} }_\sigma$ is a positive semidefinite matrix that enforces the matrix 
	pencil 
	$\lambda\mathbf{M}_{\sigma} + \ensuremath{ \mathbf{K} }_\nu$ to be positive definite for 
	$\lambda > 0$. 
\end{ass}

\begin{theorem}\label{thm:grad-div}
	Under Assumptions~\ref{ass:dom} ,\ref{ass:material}, \ref{ass:nophantoms} and \ref{ass:regmatrix}, system \eqref{eq:mqs_reg} has Kroenecker and 
	tractability 
	index 1.
\end{theorem}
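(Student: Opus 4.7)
The plan is to verify the tractability index-1 condition directly via the matrix chain and then invoke the standard equivalence to Kronecker index for regular constant-coefficient pencils. First, I would observe that under Assumption~\ref{ass:material} and Lemma~\ref{prop:material}, $\mathbf{M}_\sigma$ is symmetric positive semidefinite, and under Assumption~\ref{ass:regmatrix} the matrix $\mathbf{K}_\nu = \mathbf{C}^\top \mathbf{M}_\nu \mathbf{C} + \mathbf{Z}_\sigma$ is symmetric. Taking $\lambda = 1$ in the pencil hypothesis already gives that $\mathbf{M}_\sigma + \mathbf{K}_\nu$ is positive definite, hence $\det(\lambda \mathbf{M}_\sigma + \mathbf{K}_\nu)$ does not vanish identically and the pencil is regular.

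The crucial structural fact I need is that $\mathbf{K}_\nu$ is positive definite on $\ker \mathbf{M}_\sigma$. For any $x \in \ker \mathbf{M}_\sigma \setminus \{0\}$ and any $\lambda > 0$, one has $x^\top(\lambda \mathbf{M}_\sigma + \mathbf{K}_\nu) x = x^\top \mathbf{K}_\nu x$, which must be strictly positive by Assumption~\ref{ass:regmatrix}.

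Then I would choose $\mathbf{Q}_0$ as the orthogonal projector onto $\ker \mathbf{M}_\sigma$. Because $\mathbf{M}_\sigma$ is symmetric, $\mathrm{range}(\mathbf{M}_\sigma) = (\ker \mathbf{M}_\sigma)^\perp$, and $\mathbf{M}_\sigma$ is injective on the latter. The tractability index equals $1$ exactly when $\mathbf{M}_\sigma + \mathbf{K}_\nu \mathbf{Q}_0$ is non-singular. Suppose $(\mathbf{M}_\sigma + \mathbf{K}_\nu \mathbf{Q}_0) x = 0$; using $\mathbf{M}_\sigma \mathbf{Q}_0 = 0$ this reads $\mathbf{M}_\sigma (\mathbf{I} - \mathbf{Q}_0) x + \mathbf{K}_\nu \mathbf{Q}_0 x = 0$. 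Pairing with $\mathbf{Q}_0 x$, the first term vanishes by orthogonality of $\ker \mathbf{M}_\sigma$ and $\mathrm{range}(\mathbf{M}_\sigma)$, leaving $(\mathbf{Q}_0 x)^\top \mathbf{K}_\nu (\mathbf{Q}_0 x) = 0$. The positive-definiteness of $\mathbf{K}_\nu$ on $\ker \mathbf{M}_\sigma$ then forces $\mathbf{Q}_0 x = 0$, whence $\mathbf{M}_\sigma (\mathbf{I} - \mathbf{Q}_0) x = 0$, and injectivity on $(\ker \mathbf{M}_\sigma)^\perp$ yields $x = 0$.

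Since the pencil is regular with constant coefficients, tractability and Kronecker index coincide, so the theorem follows. The \textbf{main obstacle} I anticipate is making the single step from Assumption~\ref{ass:regmatrix} to positive-definiteness of $\mathbf{K}_\nu$ on $\ker \mathbf{M}_\sigma$ watertight; once that fact is isolated, the remainder is the standard symmetric-pencil-with-orthogonal-projector calculation. A secondary subtlety is the choice of $\mathbf{Q}_0$: any projector onto $\ker \mathbf{M}_\sigma$ suffices for the index definition, but the orthogonal one keeps the computation clean by aligning with the symmetry of $\mathbf{M}_\sigma$ so that cross terms drop out automatically.
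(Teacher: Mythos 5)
Your argument is correct, but note that the paper does not actually prove Theorem~\ref{thm:grad-div} itself: it delegates entirely to the literature, citing \cite{Nicolet_1996aa} for the Kronecker index and \cite{Kerler-Back_2017aa,Bartel_2011aa} for the tractability index. Your proposal supplies the self-contained argument the paper omits, and it follows the same route as those references: extract positive definiteness of $\mathbf{K}_\nu$ on $\ker\mathbf{M}_\sigma$ from the pencil condition in Assumption~\ref{ass:regmatrix} (the observation that $x^{\top}(\lambda\mathbf{M}_\sigma+\mathbf{K}_\nu)x = x^{\top}\mathbf{K}_\nu x$ for $x\in\ker\mathbf{M}_\sigma$ is exactly the right one-line bridge), then show non-singularity of the first matrix-chain element $\mathbf{G}_1=\mathbf{M}_\sigma+\mathbf{K}_\nu\mathbf{Q}_0$ with $\mathbf{Q}_0$ the orthogonal projector onto $\ker\mathbf{M}_\sigma$, and finally invoke the agreement of tractability and Kronecker index for regular constant-coefficient pencils (which the paper itself asserts in Section~\ref{sec:DAE}). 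The cross-term cancellation in your pairing step is legitimate since $(\mathbf{Q}_0x)^{\top}\mathbf{M}_\sigma(\mathbf{I}-\mathbf{Q}_0)x=(\mathbf{M}_\sigma\mathbf{Q}_0x)^{\top}(\mathbf{I}-\mathbf{Q}_0)x=0$ by symmetry of $\mathbf{M}_\sigma$. The only point worth adding for completeness is that your chain establishes the index is \emph{at most} $1$; for it to equal $1$ rather than $0$ one must also record that $\mathbf{M}_\sigma$ is genuinely singular, which follows from Assumption~\ref{ass:material} because the conductivity vanishes on $\Omega\setminus\Omega_\mathrm{c}$, so $\ker\mathbf{M}_\sigma\neq\{0\}$ whenever non-conducting regions are present.
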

	The proof of the \textit{Kronecker index} of system \eqref{eq:mqs_reg} has been originally given in \cite{Nicolet_1996aa}. More recently, 
	\cite{Kerler-Back_2017aa} obtained the same result for the tractability index and
	\cite{Bartel_2011aa} used the tractability index concept to analyse the DAE index of this formulation with an attached network description. 
	
Instead of using $\ensuremath{ \mathbf{Z} }_\sigma$, various gauging techniques have been proposed for this formulation, such as the tree-cotree gauge \cite{Munteanu_2002aa} or the weak gauging property of iterative linear solvers \cite{Clemens_1999ac}. Due to the simple structure 
of $\mathbf{M}_{\sigma}$ and as long as the gauge leads to a positive definite matrix pencil, the index can be derived analogously as before.

\begin{figure}[t]
	\pgfmathsetmacro{\BBd}{3e-3}
	\pgfmathsetmacro{\Alw}{4e-3}
	\pgfmathsetmacro{\Alh}{8e-3}
	\pgfmathsetmacro{\Clra}{6e-3}
	\pgfmathsetmacro{\Clrb}{8e-3}
	\pgfmathsetmacro{\Clh}{12e-3}
	\centering
	\subfigure[Cross-section of inductor at the z=0 plane.]{\includegraphics{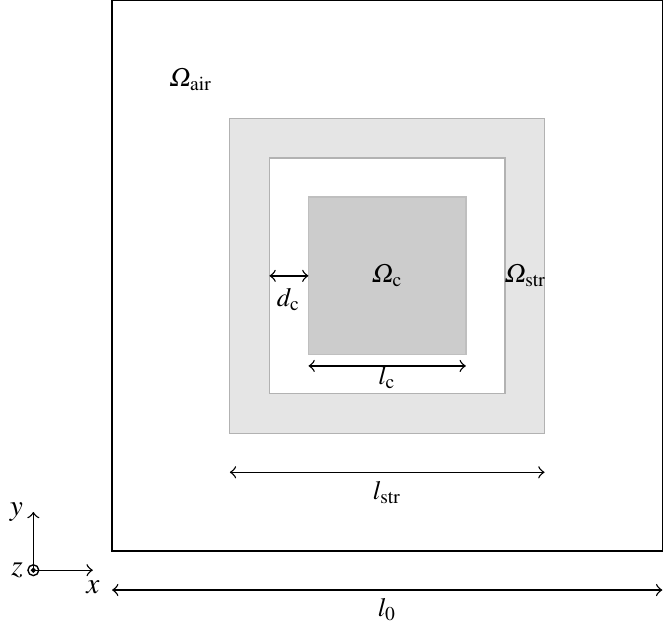}
	}
	\subfigure[Cross-section of inductor at the x=0 plane.]{\includegraphics{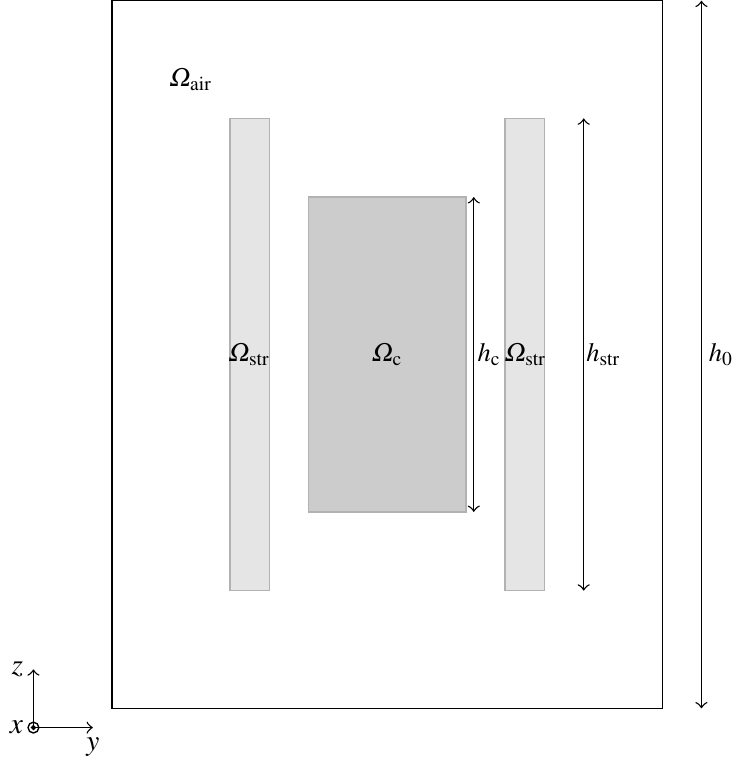}
}
	\caption{Magnetoquasistatic model of an inductor with metal core. The distances are $l_{0} = 1.4\cdot10^{-2}\,\si{\metre}$, 
		$l_\mathrm{str} = 8\cdot10^{-3}\,\si{\metre}$
		$l_\mathrm{c} =4\cdot10^{-3}\,\si{\metre} $, $d_\mathrm{c} = 1\cdot10^{-3}\,\si{\metre}$, $h_0 = 1.8\cdot10^{-2}\,\si{\metre}$, 
		$l_\mathrm{str} = 1.2\cdot10^{-2}\,\si{\metre}$ and 
		$l_\mathrm{c} = 8\cdot10^{-3}\,\si{\metre}$.}
	\label{fig:mqs}
\end{figure}

\begin{figure}[t]
	\centering
	\subfigure[Iron core (red) with surrounding coil (transparent grey)]{\includegraphics[width=.45\linewidth]{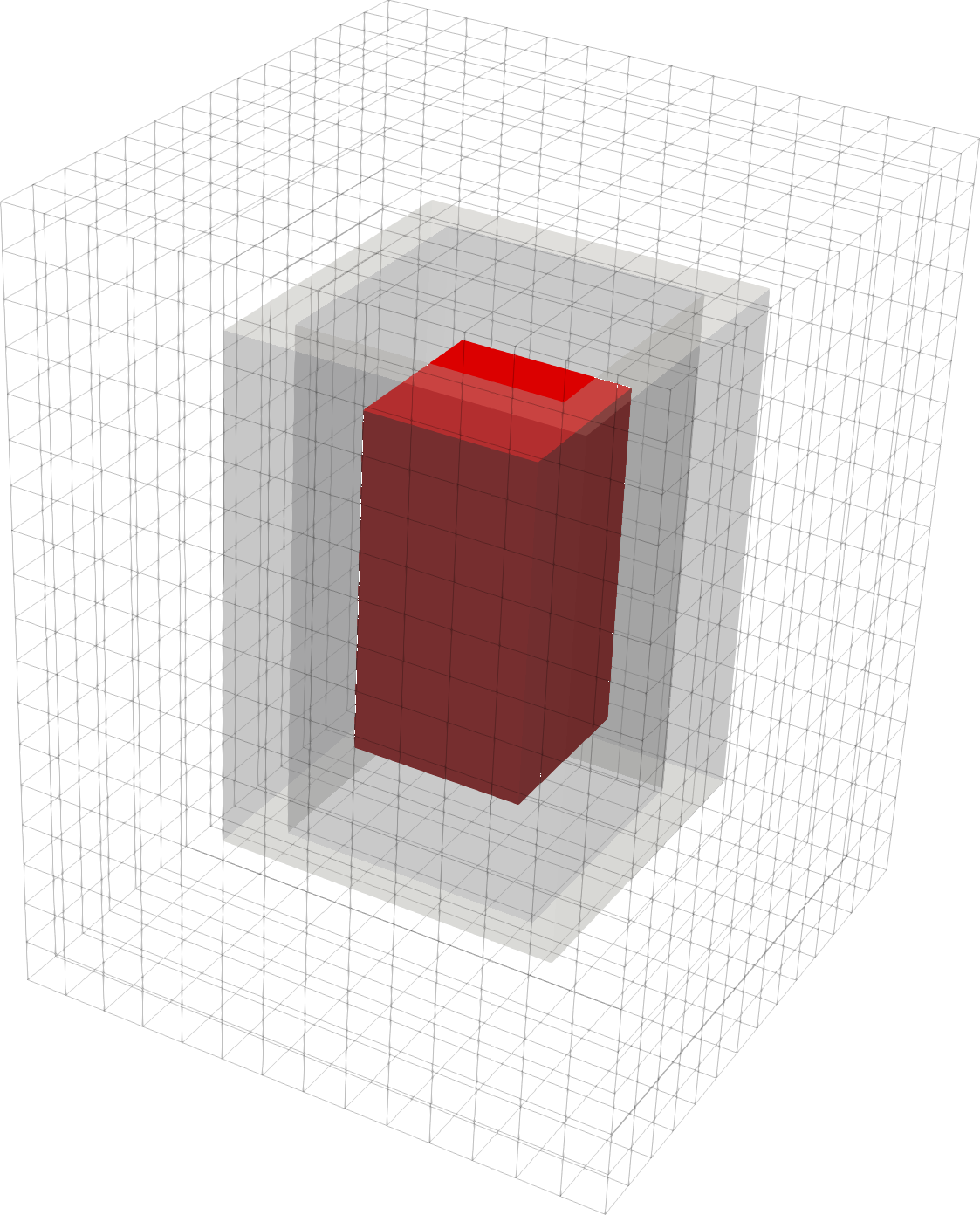}}
	\hspace{0.05\linewidth}
	\subfigure[Source current density given by winding function $\ensuremath{\vec{J}}_\text{s} = 
	\boldsymbol{\chi}_\text{s}i_\text{s}$]{\includegraphics[width=.45\linewidth]{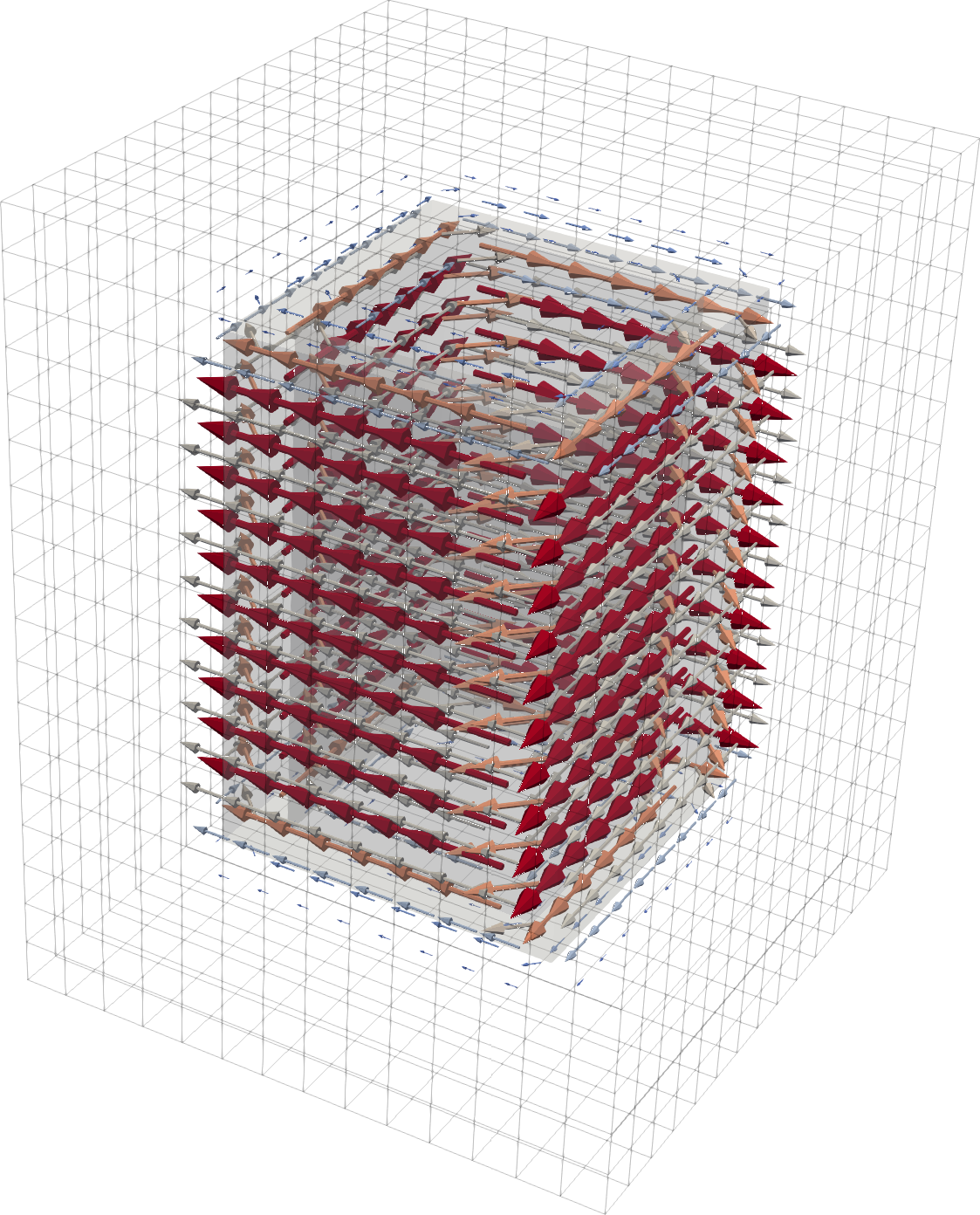}}
	\caption{Simple magnetoquasistatic model of an inductor with metal core. Coil is given by the stranded conductor model (Benchmark 
		\ref{bench:mqs_a})}
	\label{fig:mqs_a}
\end{figure}

\begin{benchmark}\label{bench:mqs_a}
A common benchmark for magnetoquasistatic models are inductors with a metal core. The example in Fig.~\ref{fig:mqs} from \cite{Cortes-Garcia_2018ab} 
features an aluminium core, i.e. 
$\sigma=$\SI{35e6}{S/m} in $\Omega_\text{c}$ with a copper coil $\Omega_\mathrm{str}$ surrounded by air $\Omega_\mathrm{air}$. The coil is given by 
the stranded conductor model consisting of 
120 turns with 
conductivity $\sigma=$\SI{1e6}{S/m}. The conductivity in $\Omega_\mathrm{air}$ is zero and disregarded in $\Omega_\mathrm{str}$ in the construction 
of $\mathbf{M}_{\sigma}$ as eddy currents are assumed 
negligible in the 
windings. Vacuum permeability $\mu = 4\pi\cdot$\SI{1e-7}{H/m} is assumed everywhere ($\Omega$) and electric boundary conditions are enforced 
$\Gamma=\Gamma_\text{ebc}$, cf. \eqref{eq:BC}. 
The FIT discretisation uses an equidistant hexahedral grid with step size $10^{-3}$, which leads to  $3528$ elements.

The discretisation of the winding function for the A* formulation can be visualised in Fig.~\ref{fig:mqs_a}.
As no gauging is performed, $9958$ degrees of freedom arise.

For the simulation, the source current density is $\protect\bbow{\mathrm{\mathbf{j}}}_\mathrm{s} = \mathbf{X}_\mathrm{s}\mathbf{i}_\mathrm{s}$, where
$\mathbf{X}_\mathrm{s}$ is the discretisation of the winding function in Fig.~\ref{fig:mqs_a}, 
$\mathbf{i}_\mathrm{s} = \sin(2\pi f_\mathrm{s} t)$ is the source current and the frequency $f_\mathrm{s}$ is $500\,$\si{\hertz}. Time integration is 
performed with the implicit Euler method with step size $\Delta t = $\SI{2e-5}{s} from time $t_0 = $\SI{0}{s} to $t_\mathrm{end} = $\SI{2e-3}{s} 
and zero 
initial condition. 
Fig.~\ref{fig:aWmagn} 
shows the magnetic energy $E_\mathrm{mag} = \frac{1}{2}\int_{\Omega}\ensuremath{\vec{H}}\cdot\ensuremath{\vec{B}}\;\mathrm{d}\Omega\approx\frac{1}{2}\protect\bbow{\mathrm{\mathbf{b}}}^{\top}\mathbf{M}_{\nu}\protect\bbow{\mathrm{\mathbf{b}}}$ 
over time.
\end{benchmark}

\begin{figure}
	\centering
	\includegraphics{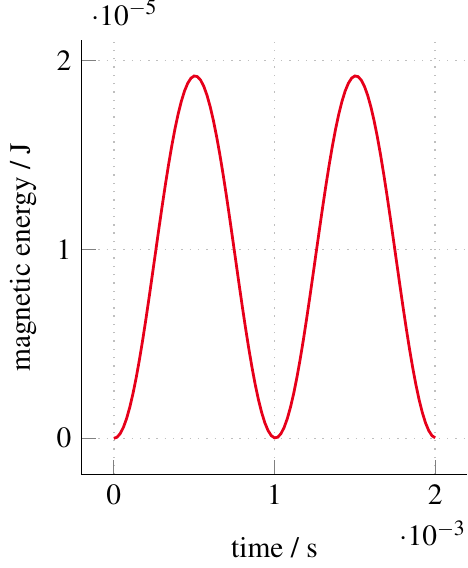}
		\caption{Magnetic energy of the inductor in Fig.~\ref{fig:mqs} simulated with the A* formulation.}
	\label{fig:aWmagn}
\end{figure}

\subsubsection{Electric Vector Potential T-$\Omega$-Formulation}
Maxwell's equations and their material laws in magnetoquasistatics can also be expressed in the T-$\Omega$ formulation. Using the electric vector and 
magnetic scalar potentials from equation \eqref{eq:to_pot}, the system reads
\begin{align*}
	\nabla\times\left(\boldsymbol{\rho}\nabla\times\mbox{\textbf{T}}\right) + \boldsymbol{\mu}\frac{\partial \mbox{\textbf{T}}}{\partial t}-\boldsymbol{\mu}\nabla\frac{\partial\psi}{\partial 
	t} &= 
	-\boldsymbol{\mu}\frac{\partial\ensuremath{\vec{H}}_{\mathrm{s}}}{\partial t}-\nabla\times\ensuremath{\vec{E}}_\mathrm{s}\\
	\nabla\cdot\left(\boldsymbol{\mu}\mbox{\textbf{T}}\right)-\nabla\cdot\left(\boldsymbol{\mu}\nabla\psi\right) &= -\nabla\cdot\left(\boldsymbol{\mu}\ensuremath{\vec{H}}_{\mathrm{s}}\right),
\end{align*}
 with $\ensuremath{\vec{E}}_\mathrm{s}$ being the source electric field strength when solid conductors are present. Here, $\boldsymbol{\rho}$ is the electrical 
 resistivity, 
which 
corresponds to the inverse of $\boldsymbol{\sigma}$ wherever $\boldsymbol{\sigma}\neq0$. As in the case of the A-$\phi$ formulation, the gauge 
condition is set for the spatially discretised version of our system
\begin{align*}
	\mathbf{C}^{\top}{\mathbf{M}_{\rho}}\mathbf{C}\protect\bow{\mathrm{\mathbf{t}}} + \mathbf{M}_{\mu}\frac{\mathrm{d}}{\mathrm{d}t}\protect\bow{\mathrm{\mathbf{t}}}+\mathbf{M}_{\mu}\widetilde{\mathbf{S}}^{\top}\frac{\mathrm{d}}{\mathrm{d}t}\Psi 
	&=-\mathbf{M}_{\mu}\frac{\mathrm{d}}{\mathrm{d}t}\protect\bow{\mathrm{\mathbf{h}}}_\mathrm{s}-\mathbf{C}\protect\bow{\mathrm{\mathbf{e}}}_\mathrm{s} \\
	\widetilde{\mathbf{S}}\mathbf{M}_{\mu}\protect\bow{\mathrm{\mathbf{t}}}+\widetilde{\mathbf{S}}\mathbf{M}_{\mu}\widetilde{\mathbf{S}}^{\top}\Psi &= -\widetilde{\mathbf{S}}\mathbf{M}_{\mu}\protect\bow{\mathrm{\mathbf{h}}}_\mathrm{s},
\end{align*}
with degrees of freedom $\protect\bow{\mathrm{\mathbf{t}}}$, which contains the on the dual grid's edges integrated electric vector potential and $\Psi$, which consists of the 
magnetic 
scalar potential on the dual grid's nodes. 
\begin{definition}\label{def:togaug}
	For gauging, a tree $T$ is generated on the dual grid's edges inside the conducting region $\Omega_{\textrm{c}}$.
	We define a projector $\tilde{\ensuremath{ \mathbf{P} }}_\mathrm{t}$ onto the cotree  of $T$ and truncate it by deleting all the linearly dependent columns to obtain 
	$\ensuremath{ \mathbf{P} }_\mathrm{t}$.
	As gauging condition we set $\ensuremath{ \mathbf{Q} }_\mathrm{t}\protect\bow{\mathrm{\mathbf{t}}} = 0$, where  $\ensuremath{ \mathbf{Q} }_\mathrm{t}$ spans the kernel of $\ensuremath{ \mathbf{P} }_\mathrm{t}^{\top}$. This corresponds to 
	setting to zero the values of the electric vector potential on the edges $T$.
\begin{align}
		\ensuremath{ \mathbf{P} }_\mathrm{t}^{\top}\mathbf{C}^{\top}{\mathbf{M}_{\rho}}\mathbf{C}\ensuremath{ \mathbf{P} }_\mathrm{t}\protect\bow{\mathrm{\mathbf{t}}} + 
		\ensuremath{ \mathbf{P} }_\mathrm{t}^{\top}\mathbf{M}_{\mu}\ensuremath{ \mathbf{P} }_\mathrm{t}\frac{\mathrm{d}}{\mathrm{d}t}\protect\bow{\mathrm{\mathbf{t}}}+\ensuremath{ \mathbf{P} }_\mathrm{t}^{\top}\mathbf{M}_{\mu}\widetilde{\mathbf{S}}^{\top}\frac{\mathrm{d}}{\mathrm{d}t}\psi \label{eq:to1}
		&=-\ensuremath{ \mathbf{P} }_\mathrm{t}^{\top}\mathbf{M}_{\mu}\frac{\mathrm{d}}{\mathrm{d}t}\protect\bow{\mathrm{\mathbf{h}}}_\mathrm{s}-\ensuremath{ \mathbf{P} }_\mathrm{t}^{\top}\mathbf{C}\protect\bow{\mathrm{\mathbf{e}}}_\mathrm{s} \\
	\widetilde{\mathbf{S}}\mathbf{M}_{\mu}\ensuremath{ \mathbf{P} }_\mathrm{t}\protect\bow{\mathrm{\mathbf{t}}}+\widetilde{\mathbf{S}}\mathbf{M}_{\mu}\widetilde{\mathbf{S}}^{\top}\psi &= -\widetilde{\mathbf{S}}\mathbf{M}_{\mu}\protect\bow{\mathrm{\mathbf{h}}}_\mathrm{s}.\label{eq:to2}
\end{align}
\end{definition}
\begin{property}\label{prop:p}
	The matrix $\ensuremath{ \mathbf{P} }_\mathrm{t}$ fulfils 
	\begin{enumerate}
		\item $
		\det \left(\ensuremath{ \mathbf{P} }_\mathrm{t}^{\top}\mathbf{C}^{\top}{\mathbf{M}_{\rho}}\mathbf{C}\ensuremath{ \mathbf{P} }_\mathrm{t}\right) \neq 0$,\label{prop:cotree1}
		\item $\ima\ensuremath{ \mathbf{P} }_\mathrm{t} \cap \ima\widetilde{\mathbf{S}}^{\top} = \emptyset$. \label{prop:cotree}
	\end{enumerate}
\end{property}
Property \hyperref[prop:cotree1]{\ref*{prop:p}.\ref*{prop:cotree1}} is a consequence of the tree-cotree gauge (see \cite{Munteanu_2002aa}) and
Property \hyperref[prop:cotree]{\ref*{prop:p}.\ref*{prop:cotree}} follows from Property \hyperref[prop:cotree1]{\ref*{prop:p}.\ref*{prop:cotree1}} 
and the 
fact that 
$\ima\widetilde{\mathbf{S}}^{\top} \subseteq \ker\mathbf{C}$ 
(Lemma \ref{prop:topmatrices}).
\begin{property}[\cite{Cortes-Garcia_2018ab}]\label{prop:span}
	Every $\mathbf{x}\in \mathbb{R}^n$ can be written as $\mathbf{x} = \mathbf{M}_{\mu}^{1/2}\widetilde{\mathbf{S}}^{\top} \mathbf{x}_1 + 
	\mathbf{M}_{\mu}^{-1/2}\mathbf{W}^{\top} \mathbf{x}_2$, where $n \coloneqq \rank{\mathbf{M}_{\mu}}$ and $\mathbf{W}$ is the matrix whose columns span $\ker \widetilde{\mathbf{S}}$.
\end{property}
\begin{proof}
	As $\ensuremath{ \mathbf{M} }_{\mu}$ has full rank and is symmetric, $\rank 
	(\ensuremath{ \mathbf{M} }_{\mu}^{1/2}\widetilde{\mathbf{S}}^{\top}) = \rank \widetilde{\mathbf{S}}^{\top}$ and \\
	$\rank(\ensuremath{ \mathbf{M} }_{\mu}^{-1/2}\mathbf{W}^{\top}) = \rank\mathbf{W}^{\top}$.
	Using the rank-nullity theorem together with the fact that both subspaces are orthogonal and thus linear independent, we obtain that their direct 
	sum spans $\mathbb{R}^n$. \qed
\end{proof}
\begin{theorem}[\cite{Cortes-Garcia_2018ab}]
	Under Assumptions~\ref{ass:dom}, \ref{ass:material} and \ref{ass:nophantoms}, the system of DAEs 
	\eqref{eq:to1}-\eqref{eq:to2} has differential index-1. 
\end{theorem}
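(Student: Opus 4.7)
The plan is to write the gauged $\ensuremath{\vec{T}}\text{-}\Omega$ system \eqref{eq:to1}--\eqref{eq:to2} in the canonical form \eqref{eq:dae} with state $\mathbf{x}^{\top} = (\protect\bow{\mathrm{\mathbf{t}}}^{\top},\,\Psi^{\top})$, identify its algebraic component, and verify that one differentiation in time suffices to recover an ODE for $\dot{\mathbf{x}}$. A direct inspection shows
$$
\mathbf{M} = \begin{pmatrix} \ensuremath{ \mathbf{P} }_\mathrm{t}^{\top}\mathbf{M}_{\mu}\ensuremath{ \mathbf{P} }_\mathrm{t} & \ensuremath{ \mathbf{P} }_\mathrm{t}^{\top}\mathbf{M}_{\mu}\widetilde{\mathbf{S}}^{\top} \\ 0 & 0 \end{pmatrix},
$$
so the block row coming from \eqref{eq:to2} is purely algebraic and $\mathbf{M}$ is singular; the system is therefore genuinely a DAE.

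Next I would differentiate \eqref{eq:to2} once with respect to $t$ and stack the resulting equation underneath \eqref{eq:to1}. This yields the enlarged linear system
$$
\underbrace{\begin{pmatrix} \ensuremath{ \mathbf{P} }_\mathrm{t}^{\top} \\ \widetilde{\mathbf{S}} \end{pmatrix} \mathbf{M}_{\mu} \begin{pmatrix} \ensuremath{ \mathbf{P} }_\mathrm{t} & \widetilde{\mathbf{S}}^{\top} \end{pmatrix}}_{=:\,\widehat{\mathbf{M}}} \begin{pmatrix} \dot{\protect\bow{\mathrm{\mathbf{t}}}} \\ \dot{\Psi} \end{pmatrix} = \widehat{\mathbf{b}}(t,\mathbf{x}),
$$
with $\widehat{\mathbf{b}}$ a continuous function of $(t,\mathbf{x})$ depending on $\protect\bow{\mathrm{\mathbf{t}}}$, $\Psi$ and the sufficiently smooth excitations $\protect\bow{\mathrm{\mathbf{h}}}_\mathrm{s}$, $\protect\bow{\mathrm{\mathbf{e}}}_\mathrm{s}$. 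If $\widehat{\mathbf{M}}$ is invertible, I can solve for $\dot{\mathbf{x}}$ as a continuous function of $(t,\mathbf{x})$, certifying the differential index is at most~$1$; together with the singularity of $\mathbf{M}$ this gives index exactly~$1$.

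The crux is therefore the invertibility of $\widehat{\mathbf{M}}$. By Lemma~\ref{prop:material} together with Assumption~\ref{ass:material}, $\mathbf{M}_{\mu}$ is symmetric positive definite, so $\widehat{\mathbf{M}}$ is congruent to an SPD matrix provided the block $\begin{pmatrix} \ensuremath{ \mathbf{P} }_\mathrm{t} & \widetilde{\mathbf{S}}^{\top} \end{pmatrix}$ has full column rank. To establish this, I would assume $\ensuremath{ \mathbf{P} }_\mathrm{t}\mathbf{x}_1 + \widetilde{\mathbf{S}}^{\top}\mathbf{x}_2 = 0$ and observe that $\ensuremath{ \mathbf{P} }_\mathrm{t}\mathbf{x}_1 = -\widetilde{\mathbf{S}}^{\top}\mathbf{x}_2$ lies in $\ima\ensuremath{ \mathbf{P} }_\mathrm{t} \cap \ima\widetilde{\mathbf{S}}^{\top}$, which by Property~\hyperref[prop:cotree]{\ref*{prop:p}.\ref*{prop:cotree}} reduces to $\{0\}$. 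Full column rank of $\ensuremath{ \mathbf{P} }_\mathrm{t}$ (a consequence of the truncation in Definition~\ref{def:togaug}) then forces $\mathbf{x}_1 = 0$, and Assumption~\ref{ass:nophantoms}, which ensures $\ker\widetilde{\mathbf{S}}^{\top} = \{0\}$, delivers $\mathbf{x}_2 = 0$.

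The main obstacle is the identification and verification of exactly the right algebraic ingredients: showing that the cotree-based projector $\ensuremath{ \mathbf{P} }_\mathrm{t}$ from Definition~\ref{def:togaug} together with the discrete gradient $-\widetilde{\mathbf{S}}^{\top}$ spans its image space as a direct sum, which is what Property~\ref{prop:p} and, at the ambient level, Property~\ref{prop:span} encode. Once this rank argument is in place, the implication "$\widehat{\mathbf{M}}$ invertible $\Rightarrow$ index~$1$" is automatic from the definition of the differential index.
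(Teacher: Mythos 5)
Your proof is correct, and it reaches the same crux as the paper --- the invertibility of the mass matrix obtained after differentiating the constraint \eqref{eq:to2} once --- but it packages the linear algebra differently. The paper first eliminates $\frac{\mathrm{d}}{\mathrm{d}t}\Psi$ using the invertibility of $\widetilde{\mathbf{S}}\mathbf{M}_{\mu}\widetilde{\mathbf{S}}^{\top}$ and is then left with the Schur complement $\ensuremath{ \mathbf{P} }_\mathrm{t}^{\top}\mathbf{Z}\ensuremath{ \mathbf{P} }_\mathrm{t}$, whose positive definiteness it verifies by expanding $\ensuremath{ \mathbf{P} }_\mathrm{t}\mathbf{x}$ in the explicit basis of Property~\ref{prop:span} and computing $\mathbf{x}^{\top}\ensuremath{ \mathbf{P} }_\mathrm{t}^{\top}\mathbf{Z}\ensuremath{ \mathbf{P} }_\mathrm{t}\mathbf{x}=\mathbf{x}_2^{\top}\mathbf{W}\mathbf{M}_{\mu}^{-1}\mathbf{W}^{\top}\mathbf{x}_2$. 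You instead keep the full block system and observe that its mass matrix is the congruence $\widehat{\mathbf{M}}=\mathbf{B}^{\top}\mathbf{M}_{\mu}\mathbf{B}$ with $\mathbf{B}=\bigl(\ensuremath{ \mathbf{P} }_\mathrm{t}\;\;\widetilde{\mathbf{S}}^{\top}\bigr)$, reducing everything to the full column rank of $\mathbf{B}$, which follows from the trivial intersection $\ima\ensuremath{ \mathbf{P} }_\mathrm{t}\cap\ima\widetilde{\mathbf{S}}^{\top}$ of Property~\hyperref[prop:cotree]{\ref*{prop:p}.\ref*{prop:cotree}}, injectivity of $\ensuremath{ \mathbf{P} }_\mathrm{t}$ from Definition~\ref{def:togaug}, and $\ker\widetilde{\mathbf{S}}^{\top}=0$ from Assumption~\ref{ass:nophantoms}. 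The two arguments are equivalent (invertibility of $\widehat{\mathbf{M}}$ with SPD lower-right block is exactly invertibility of the Schur complement), but yours dispenses with Property~\ref{prop:span} entirely and is arguably cleaner; what it gives up is the paper's step-by-step exhibition of the explicit ODE for $\frac{\mathrm{d}}{\mathrm{d}t}\Psi$ and $\frac{\mathrm{d}}{\mathrm{d}t}\protect\bow{\mathrm{\mathbf{t}}}$, which is what one would actually implement.
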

\begin{proof}
	 As the system has an algebraic constraint, it has at least index-1. Equation \eqref{eq:to2} is differentiated and $\frac{\mathrm{d}}{\mathrm{d}t}\Psi$
	is extracted as
	\begin{equation*}
		\frac{\mathrm{d}}{\mathrm{d}t}\Psi  = -(\widetilde{\mathbf{S}}\mathbf{M}_{\mu}\widetilde{\mathbf{S}}^{\top})^{-1}\widetilde{\mathbf{S}}\mathbf{M}_{\mu}\ensuremath{ \mathbf{P} }_\mathrm{t}\frac{\mathrm{d}}{\mathrm{d}t}\mathbf{t} - 
		(\widetilde{\mathbf{S}}\mathbf{M}_{\mu}\widetilde{\mathbf{S}}^{\top})^{-1}\widetilde{\mathbf{S}}\mathbf{M}_{\mu}\protect\bow{\mathrm{\mathbf{h}}}_\mathrm{s}.
	\end{equation*}
	This can be inserted into equation \eqref{eq:to1} and now it is sufficient to see that \mbox{$\det\left(\ensuremath{ \mathbf{P} }_\mathrm{t}\ensuremath{ \mathbf{Z} }\ensuremath{ \mathbf{P} }_\mathrm{t}\right) 
	\neq 
	0$} for
	\begin{equation*}
		\ensuremath{ \mathbf{Z} } = (\mathbf{M}_{\mu}-\mathbf{M}_{\mu}\widetilde{\mathbf{S}}^{\top}(\widetilde{\mathbf{S}}\mathbf{M}_{\mu}\widetilde{\mathbf{S}}^{\top})^{-1}\widetilde{\mathbf{S}}\mathbf{M}_{\mu}).
	\end{equation*}
    	We can write $\mathbf{M}_{\mu}^{1/2}\ensuremath{ \mathbf{P} }_\mathrm{t} \mathbf{x} =\mathbf{M}_{\mu}^{1/2}\widetilde{\mathbf{S}}^{\top} \mathbf{x}_1 + 
    	\mathbf{M}_{\mu}^{-1/2}\mathbf{W}^{\top} 
    \mathbf{x}_2$ (Property \ref{prop:span}). As $\mathbf{M}_{\mu}^{1/2}$ is invertible, \mbox{$\ensuremath{ \mathbf{P} }_\mathrm{t} \mathbf{x} =\widetilde{\mathbf{S}}^{\top} 
    \mathbf{x}_1 + 
    \mathbf{M}_{\mu}^{-1}\mathbf{W}^{\top} 
    \mathbf{x}_2$} and, as $\ensuremath{ \mathbf{P} }_\mathrm{t}\mathbf{x} \neq \widetilde{\mathbf{S}}^{\top} \mathbf{x}_1$ (Property \ref{prop:p}), $\mathbf{M}_{\mu}^{-1}\mathbf{W}^{\top} 
    \mathbf{x}_2\neq 0$.
    Thus
 \begin{align*}
 \ensuremath{ \mathbf{x} }^{\top} \ensuremath{ \mathbf{P} }_\mathrm{t}^{\top}\ensuremath{ \mathbf{Z} }\ensuremath{ \mathbf{P} }_\mathrm{t} \ensuremath{ \mathbf{x} } = \ensuremath{ \mathbf{x} }_2^{\top}\mathbf{W}\mathbf{M}_{\mu}^{-1}\mathbf{W}^{\top}\ensuremath{ \mathbf{x} }_2 > 0,
 \end{align*}
 as long as $\ensuremath{ \mathbf{x} } \neq 0$. We conclude that  $\ensuremath{ \mathbf{P} }_\mathrm{t}^{\top}\ensuremath{ \mathbf{Z} }\ensuremath{ \mathbf{P} }_\mathrm{t}$ is positive definite. \qed
\end{proof}

\begin{figure}[t]
	\centering
	\subfigure[Iron core (red) with surrounding coil (transparent grey)]{\includegraphics[width=.45\linewidth]{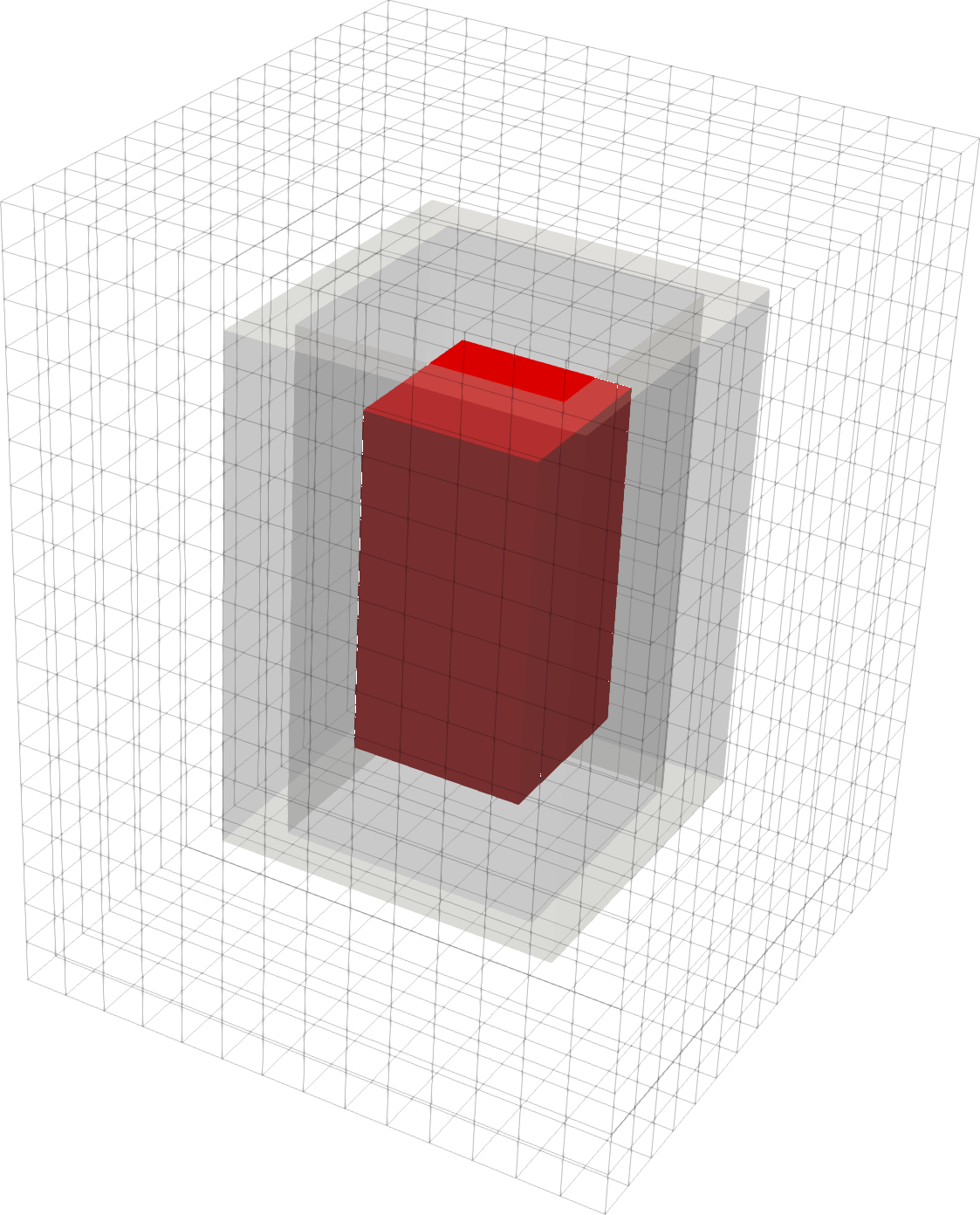}}
	\hspace{0.05\linewidth}
	\subfigure[Source magnetic field strength $\ensuremath{\vec{H}}_\text{s}$ such that $\nabla\times\ensuremath{\vec{H}}_\text{s} = \boldsymbol{\chi}_\text{s}i_\text{s}$]{\includegraphics[width=.45\linewidth]{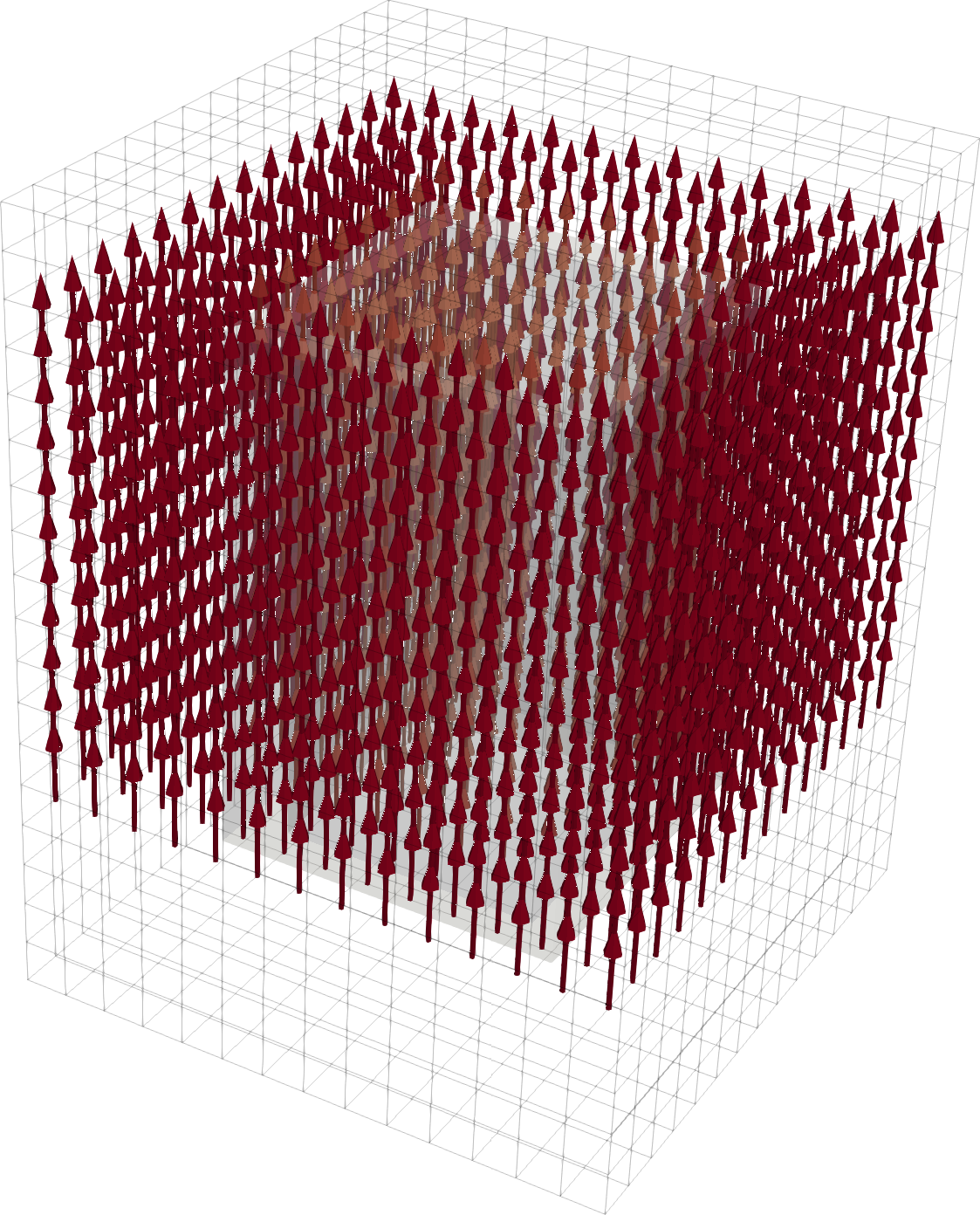}}
	\caption{Simple magnetoquasistatic model of an inductor with metal core. Coil is given by the stranded conductor model (Benchmark \ref{bench:mqs_t})}
	\label{fig:mqs_t}
\end{figure}

\begin{benchmark}\label{bench:mqs_t}
The physical specifications and discretisation of the benchmark example of the T-$\Omega$ formulation is equivalent to Benchmark~\ref{bench:mqs_a}.

The construction of the winding function is different and it is depicted in Fig.~\ref{fig:mqs_t}.
This time a tree-cotree gauge is performed, which yield only $4610$ degrees of freedom.

Again for the simulation the source current is set to $\mathbf{i}_\mathrm{s} = \sin(2\pi f_\mathrm{s} t)$ with frequency 
$f_\mathrm{s}=500\,$\si{\hertz}. The source magnetic field is $\protect\bow{\mathrm{\mathbf{h}}}_\mathrm{s} = \mathbf{Y}_\mathrm{s}\mathbf{i}_\mathrm{s}$, 
with $\mathbf{Y}_\mathrm{s}$ being the discretisation of the winding function in Fig.~\ref{fig:mqs_t}. Like in Benchmark~\ref{bench:mqs_a}, 
time integration is performed with implicit Euler with step size $\Delta t =$\SI{2e-5}{s} from time $t_0 = $\SI{0}{s} to $t_\mathrm{end} = 
$\SI{2e-3}{s} and with zero initial condition.
The resulting magnetic energy is depicted in Figure~\ref{fig:tWmagn}.

\end{benchmark}
\begin{figure}
	\centering
	\includegraphics{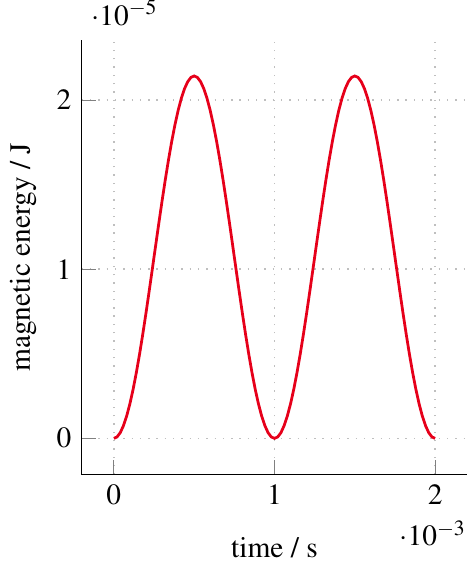}
		\caption{Magnetic energy of the inductor in Fig.~\ref{fig:mqs} simulated with the T-$\Omega$ formulation.}
	\label{fig:tWmagn}
\end{figure}

\begin{remark}
	Note that the magnetic energy obtained with the A* formulation in Figure \ref{fig:aWmagn} and the one obtained with the T-$\Omega$ one in Figure 
	\ref{fig:tWmagn} differ. Both formulations are dual to each other, i.e., their degrees of freedom are on dual sides of Maxwell's House in Figure 
	\ref{fig:tonti}. They converge to the unique physical solution from below and above. This property can be used in order to study the error of the 
	spatial discretisation (see \cite{Albanese_1991aa}).
\end{remark}

\subsection{Darwin Model}
In many situations, phenomena related to electric energy, magnetic energy and Joule losses coincide, while at the same time, for the considered operating frequencies, the wave lengths are much larger than the model size, which indicates that wave propagation effects can be neglected. An example thereof is a filter design with concentrated elements, i.e., coils and capacitors connected by strip lines, arranged on a printed circuit board \cite{Schuhmacher_2017aa}. In such models, resonances between the discrete elements and Joule losses in all conducting parts need to be considered, whereas for the considered frequency range, cross-talk due to electromagnetic radiation is irrelevant. For such configurations, a combination of electroquasistatics and magnetoquasistatics in the form of the \textit{Darwin} approximation of the Maxwell equations is the appropriate formulation.

The Darwin approximation starts from the decomposition of the electric field strength $\ensuremath{\vec{E}}=\ensuremath{\vec{E}}_\text{irr}+\ensuremath{\vec{E}}_\text{rem}$ into an \textit{irrotational part} $\ensuremath{\vec{E}}_\text{irr}$ and a \textit{remainder part} $\ensuremath{\vec{E}}_\text{rem}$. This decomposition is not unique, in contrast to the Helmholtz decomposition, which can be considered as a special case enforcing $\ensuremath{\vec{E}}_\text{rem}$ to be solenoidal. In this paper, the non-uniqueness of the decomposition will be resolved when choosing a gauge condition below. The Darwin approximation consists of removing the displacement currents related to $\ensuremath{\vec{E}}_\text{rem}$ from the law of Amp\`{e}re-Maxwell. This affects the set of Maxwell equations in the sense that, in the formulations derived below, second derivatives with respect to time vanish and the overall PDE looses its hyperbolic character, which is equivalent to neglecting wave propagation effects. The set of relevant equations is
\begin{align}
	\label{eq:darwinfaraday}\nabla\times\ensuremath{\vec{E}}_\text{rem} &= -\frac{\partial{\ensuremath{\vec{B}}}}{\partial{{t}}} \quad ;\\
	\label{eq:darwinampere}\nabla\times\left(\boldsymbol{\nu}\ensuremath{\vec{B}}\right) &= \ensuremath{\vec{J}_{\mathrm{s}}} +\boldsymbol{\sigma}\ensuremath{\vec{E}}_\text{irr} +\boldsymbol{\sigma}\ensuremath{\vec{E}}_\text{rem} +\frac{\partial{}}{\partial{{t}}}\left(\varepsilon\ensuremath{\vec{E}}_\text{irr}\right) \quad ;\\
	\label{eq:darwinmgs}\nabla\cdot\ensuremath{\vec{B}} &= 0 \quad ;\\
	\label{eq:darwinegs}\nabla\cdot\left(\varepsilon\ensuremath{\vec{E}}_\text{irr}\right) +\nabla\cdot\left(\varepsilon\ensuremath{\vec{E}}_\text{rem}\right) &= \rho \quad .
\end{align}

Almost all publications on the Darwin approximation are limited to the case with homogeneous materials. They choose the Helmholtz decomposition, i.e., $\nabla\cdot\ensuremath{\vec{E}}_\text{rem}=0$, which causes the term $\nabla\cdot\left(\varepsilon\ensuremath{\vec{E}}_\text{rem}\right)$ in \eqref{eq:darwinegs} to vanish \cite{Dirks_1996aa,Larsson_2007aa}. In the more general case considered here, however, this term is important as it models the charges accumulating at material interfaces due to the induced electric field strength \cite{Koch_2011aa}.

An ambiguity arises when expressing the continuity equation $\nabla\cdot\ensuremath{\vec{J}}+\frac{\partial{\rho}}{\partial{{t}}}=0$. When inserting Gauss law, the result reads
\begin{equation}\label{eq:darwincontinuity}
	\nabla\cdot\ensuremath{\vec{J}_{\mathrm{s}}} +\nabla\cdot\left(\boldsymbol{\sigma}\ensuremath{\vec{E}}_\text{irr}\right) +\nabla\cdot\left(\boldsymbol{\sigma}\ensuremath{\vec{E}}_\text{rem}\right) 
	+\frac{\partial{}}{\partial{{t}}}\left(\nabla\cdot\varepsilon\ensuremath{\vec{E}}_\text{irr}\right) +\frac{\partial{}}{\partial{{t}}}\left(\nabla\cdot\varepsilon\ensuremath{\vec{E}}_\text{rem}\right) =0 \quad ,
\end{equation}
whereas when applying applying the divergence operator to \eqref{eq:darwinampere}, the last term in \eqref{eq:darwincontinuity} is missing. Hence, the Darwin model introduces an anomaly in the continuity equation, which can only be alleviated by also neglecting the displacement currents due to $\ensuremath{\vec{E}}_\text{rem}$ in the continuity equation.

\subsubsection{$\phi$-A-$\psi$ formulation of the Darwin model}

The irrotational part in of the electric field strength is represented by an electric scalar potential $\phi$, i.e., $\ensuremath{\vec{E}}_\text{irr}=-\nabla\phi$. The magnetic Gauss law \eqref{eq:darwinmgs} is resolved by the definition of the magnetic vector potential $\ensuremath{\vec{A}}$, i.e., $\ensuremath{\vec{B}}=\nabla\times\ensuremath{\vec{A}}$, whereas Faraday's law \eqref{eq:darwinfaraday} is fulfilled by the definition of an additional scalar potential $\psi$ such that $\ensuremath{\vec{E}}_\text{rem}=-\frac{\partial{\ensuremath{\vec{A}}}}{\partial{{t}}}-\nabla\psi$. The law of Amp\`{e}re-Darwin, Gauss' law and the continuity equation become
\begin{align}
\label{eq:darwin1}\nabla\times\left(\boldsymbol{\nu}\nabla\times\ensuremath{\vec{A}}\right) +\boldsymbol{\sigma}\frac{\partial{\ensuremath{\vec{A}}}}{\partial{{t}}} +\boldsymbol{\sigma}\nabla\psi +\boldsymbol{\sigma}\nabla\phi +\varepsilon\nabla\frac{\partial{\phi}}{\partial{{t}}} &=\ensuremath{\vec{J}_{\mathrm{s}}} \quad ;\\
\label{eq:darwin2}-\nabla\cdot\left(\varepsilon\frac{\partial{\ensuremath{\vec{A}}}}{\partial{{t}}}\right) -\nabla\cdot\left(\varepsilon\nabla\psi\right) -\nabla\cdot\left(\varepsilon\nabla\phi\right) &=\rho \quad ;\\
\label{eq:darwin3}-\nabla\cdot\left(\boldsymbol{\sigma}\frac{\partial{\ensuremath{\vec{A}}}}{\partial{{t}}}\right) -\nabla\cdot\left(\boldsymbol{\sigma}\nabla\psi\right) -\nabla\cdot\left(\boldsymbol{\sigma}\nabla\phi\right) -\nabla\cdot\left(\varepsilon\nabla\frac{\partial{\phi}}{\partial{{t}}}\right) &= 0 \quad .
\end{align}

The introduced potentials $\phi$, $\ensuremath{\vec{A}}$ and $\psi$ lead to too many degrees of freedom. The electric field strength is decomposed as
\begin{equation}
	\ensuremath{\vec{E}} =\quad\underbrace{\overbrace{-\nabla\phi}^{\ensuremath{\vec{E}}_\phi}}_{\ensuremath{\vec{E}}_\text{irr}} \quad\underbrace{\overbrace{-\frac{\partial{\ensuremath{\vec{A}}}}{\partial{{t}}}}^{\ensuremath{\vec{E}}_{\ensuremath{\vec{A}}}} \quad\overbrace{-\nabla\psi}^{\ensuremath{\vec{E}}_\psi}}_{\ensuremath{\vec{E}}_\text{rem}}
\end{equation}
Irrotational parts of the electric field strength can be attributed to $\ensuremath{\vec{E}}_\phi$, $\ensuremath{\vec{E}}_{\ensuremath{\vec{A}}}$ or $\ensuremath{\vec{E}}_\psi$. However, the splitting determines which displacement currents are considered in the law of Amp\`{e}re-Maxwell and which are only considered in the electric law of Gauss. All components of $\ensuremath{\vec{E}}$ contribute to the Joule losses and contribute in the electric Gauss law, whereas only $\ensuremath{\vec{E}}_\phi$ contributes to the displacement currents. The component $\ensuremath{\vec{E}}_\psi$ can be fully integrated into $\ensuremath{\vec{E}}_{\ensuremath{\vec{A}}}$.

The choice of $\psi$ determines the fraction of the displacement currents which is neglected in the Darwin model. In the case of a homogeneous material distribution, the choice $\psi=0$ implies a Helmholtz splitting of $\ensuremath{\vec{E}}$ into the irrotational part $\ensuremath{\vec{E}}_\text{irr}$ and a solenoidal part $\ensuremath{\vec{E}}_\text{rem}$. Then, the Darwin approximation amounts to neglecting the displacement currents related to the solenoidal part of the electric field strength. Other choices for $\psi$ lead to other Darwin models yielding different results. Hence, Darwin's approximation to the Maxwell equations is not gauge-invariant \cite{Larsson_2007aa}.

\subsubsection{$\psi$-$\ensuremath{\vec{A}}^*$-formulation of the Darwin model}

A straightforward choice for $\psi$ is $\psi=0$ in which case, it makes sense to combine \eqref{eq:darwin1} and \eqref{eq:darwin2}, i.e.,
\begin{align}
	\label{eq:darwin4}\nabla\times\left(\boldsymbol{\nu}\nabla\times\ensuremath{\vec{A}}\right) +\boldsymbol{\sigma}\frac{\partial{\ensuremath{\vec{A}}}}{\partial{{t}}} +\boldsymbol{\sigma}\nabla\phi +\varepsilon\nabla\frac{\partial{\phi}}{\partial{{t}}} &=\ensuremath{\vec{J}_{\mathrm{s}}} \quad ;\\
	\label{eq:darwin5}-\nabla\cdot\left(\varepsilon\frac{\partial{\ensuremath{\vec{A}}}}{\partial{{t}}}\right) -\nabla\cdot\left(\varepsilon\nabla\phi\right) &=\rho \quad .
\end{align}
The drawback of this formulation is that the charge density must be prescribed, which may be cumbersome in the presence of metallic parts at floating potentials.

\subsubsection{Alternative $\psi$-$\ensuremath{\vec{A}}^*$-formulation of the Darwin model}

Another possible formulation \cite{Koch_2010aa} arises with $\psi=0$, but by combining \eqref{eq:darwin1} and \eqref{eq:darwin3}, i.e.,
\begin{align}
\label{eq:darwin6}\nabla\times\left(\boldsymbol{\nu}\nabla\times\ensuremath{\vec{A}}\right) +\boldsymbol{\sigma}\frac{\partial{\ensuremath{\vec{A}}}}{\partial{{t}}} +\boldsymbol{\sigma}\nabla\phi +\varepsilon\nabla\frac{\partial{\phi}}{\partial{{t}}} &=\ensuremath{\vec{J}_{\mathrm{s}}} \quad ;\\
\label{eq:darwin7}-\nabla\cdot\left(\boldsymbol{\sigma}\frac{\partial{\ensuremath{\vec{A}}}}{\partial{{t}}}\right) -\nabla\cdot\left(\boldsymbol{\sigma}\nabla\phi\right) -\nabla\cdot\left(\varepsilon\nabla\frac{\partial{\phi}}{\partial{{t}}}\right) &=0 \quad .
\end{align}
Here, however, the magnetic vector potential $\ensuremath{\vec{A}}$ is not uniquely defined in the non-conductive model parts. Then, a gauge is necessary is fix the irrotational part of $\ensuremath{\vec{A}}$ in the non-conductive model parts. During post-processing, one should discard the irrotational part of $\ensuremath{\vec{A}}$ when calculating the electric field strength or displacement current density in the non-conducting parts.

\subsubsection{Discretisation of the Darwin model}

The discrete counterpart of \eqref{eq:darwin4} and \eqref{eq:darwin5} reads
\begin{align}
	\label{eq:darwin8}\mathbf{C}^{\top}\mathbf{M}_{\nu}\mathbf{C}\protect\bow{\mathrm{\mathbf{a}}} +\mathbf{M}_{\sigma}\frac{\mathrm{d}}{\mathrm{d}t}\protect\bow{\mathrm{\mathbf{a}}} -\mathbf{M}_{\sigma}\widetilde{\mathbf{S}}^{\top}\Phi -\mathbf{M}_{\varepsilon}\widetilde{\mathbf{S}}^{\top}\frac{\mathrm{d}}{\mathrm{d}t}\Phi &= \protect\bbow{\mathrm{\mathbf{j}}}_\mathrm{s} \quad ;\\
	\label{eq:darwin9}\widetilde{\mathbf{S}}\mathbf{M}_{\varepsilon}\frac{\mathrm{d}}{\mathrm{d}t}\protect\bow{\mathrm{\mathbf{a}}} +\widetilde{\mathbf{S}}\mathbf{M}_{\varepsilon}\widetilde{\mathbf{S}}^{\top}\Phi &=\mathrm{\mathbf{q}} \quad ,
\end{align}
whereas the discrete counterpart of \eqref{eq:darwin6} and \eqref{eq:darwin7} reads
\begin{align}
	\label{eq:darwin10}\mathbf{C}^{\top}\mathbf{M}_{\nu}\mathbf{C}\protect\bow{\mathrm{\mathbf{a}}} +\mathbf{M}_{\sigma}\frac{\mathrm{d}}{\mathrm{d}t}\protect\bow{\mathrm{\mathbf{a}}} -\mathbf{M}_{\sigma}\widetilde{\mathbf{S}}^{\top}\Phi -\mathbf{M}_{\varepsilon}\widetilde{\mathbf{S}}^{\top}\frac{\mathrm{d}}{\mathrm{d}t}\Phi &= \protect\bbow{\mathrm{\mathbf{j}}}_\mathrm{s} \quad ;\\
	\label{eq:darwin11}\widetilde{\mathbf{S}}\mathbf{M}_{\sigma}\frac{\mathrm{d}}{\mathrm{d}t}\protect\bow{\mathrm{\mathbf{a}}} +\widetilde{\mathbf{S}}\mathbf{M}_{\sigma}\widetilde{\mathbf{S}}^{\top}\Phi +\widetilde{\mathbf{S}}\mathbf{M}_{\varepsilon}\widetilde{\mathbf{S}}^{\top}\frac{\mathrm{d}}{\mathrm{d}t}\Phi &=0 \quad ,
\end{align}
with all matrices defined as above. The second formulation needs a gauge for the non-conducting model parts as previously discussed in Assumption~\ref{ass:regmatrix}.

\bigskip

In \cite{Koch_2011aa}, simulation results for the Darwin approximation, the electroquasistatic approximation and the magnetoquasistatic approximation are compared to results obtained by a full-wave solver, serving as a reference. This comparison clearly shows the limitations (the neglected wave propagation effects) and the advantages (more accurate than electroquasistatics and magnetoquasistatics and faster than a full-wave solver) of the Darwin model.

\bigskip

A DAE index analysis of the systems \eqref{eq:darwin8}-\eqref{eq:darwin9} and \eqref{eq:darwin10}-\eqref{eq:darwin11} is an open research question. Also, the following benchmark is merely a literature reference. 

\begin{figure}[t]
  \centering
  \includegraphics[width=.58\linewidth]{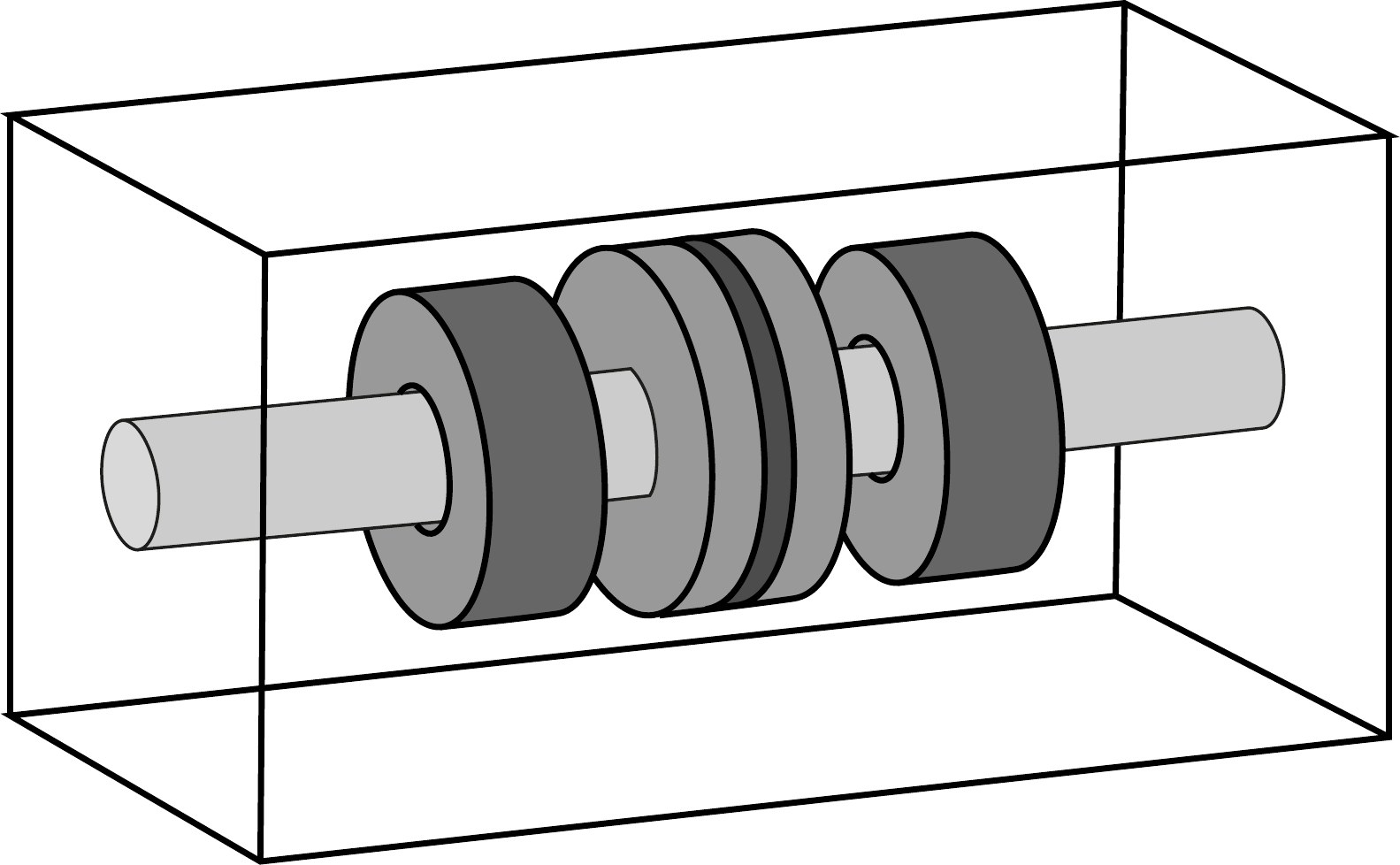}
  \caption{Benchmark example for Darwin model, originally proposed in \cite{Koch_2011aa}, see Benchmark \ref{bench:darwin}.}
	\label{fig:darwin}
\end{figure}

\begin{benchmark}\label{bench:darwin}
The Darwin benchmark example shown in Fig.~\ref{fig:darwin} is taken from \cite{Koch_2011aa}. It is axisymmetric and consists of conductive, 
dielectric and magnetic pieces. A solid conductor is connected to two plates which form a capacitor in combination with the dielectric material layer 
in between. Two solid ferrite rings surround the conductor. The problem is excited by potentials $\phi_1$  and $\phi_0$ at the ends of the conductor, 
the remaining boundaries are mbc. Geometry specifications and material data can be found in \cite{Koch_2011aa}. Koch et al. propose to regularise the 
model by an artificial conductivity.
\end{benchmark} 
\section{Conclusions}\label{sec:conclu}
This paper has discussed various formulation for low and high-frequency problems in computational electromagnetics. In contrast to electric circuit simulation, e.g. \cite{Tischendorf_1999aa}, most formulations are rather harmless, i.e., they lead to a systems of low DAE index ($\leq2$). More precisely, the only index-2 problem arises in the case of considering Maxwell's equations in A-$\phi$-potential formulation with a Coulomb gauge. It has been shown that this can be mitigated by a reformulation based on Lorenz' gauge. Obviously, when coupling various formulations with each other or with electric circuits, the situation becomes more complex.
\vspace{-1em}

\end{document}